\newtheorem{theorem}{Theorem}[section]
\newtheorem{lemma}[theorem]{Lemma}
\newtheorem{cor}[theorem]{Corollary}
\newtheorem{defn}[theorem]{Definition}
\newtheorem{obs}[theorem]{Observation}
\newcommand{\deq}{\mathrel{\mathop:}=}
\newcommand{\alg}[1]{\mathbf{#1}}
\newcommand{\conv}[1]{#1\breve{\ }}
\newcommand{\comp}{\mathbin{;}}
\def\pw{\mathscr{P}}
\tikzstyle{every label}=[label distance=0pt]
\tikzstyle{bdot}[1.5]=[circle,fill,draw,minimum size=#1mm,inner sep=0pt]
\tikzstyle{dot}[1.5]=[circle,draw,minimum size=#1mm,inner sep=0pt]
\tikzstyle{every edge}=[draw=black,thick]
\title{Qualitative representations of chromatic algebras}
\author[Al Juaid]{Badriah Al Juaid$^1$}\email{b.al-juaid@latrobe.edu.au}
\author[Jackson]{Marcel Jackson$^2$}\email{m.g.jackson@latrobe.edu.au}
\author[Koussas]{James Koussas$^3$}\email{j.koussas@latrobe.edu.au}
\author[Kowalski]{Tomasz Kowalski$^4$}\email{tomasz.s.kowalski@uj.edu.pl}
\address{${}^{1,2,3}$ Department of Mathematical and Physical Sciences, La Trobe University, Melbourne, VIC 3086, Australia}
\address{${}^{4}$ Jagiellonian University, Department of Logic,
Grodzka 52, 31-044 Krakow, Poland}
\begin{document}

\maketitle
\begin{abstract}
Conventional Ramsey-theoretic investigations for edge-colourings of complete graphs are framed around avoidance of certain configurations.  Motivated by considerations arising in the field of Qualitative Reasoning, we explore edge colourings that in addition to 
forbidding certain triangle configurations also require others to be present.  These conditions have natural combinatorial interest in their own right, but also correspond to \emph{qualitative representability} of certain \emph{nonassociative relation algebras}, which
we will call \emph{chromatic}.
\end{abstract}

\section{Introduction}\label{sec:intro}
In an edge $n$-colouring of a complete graph, each triangle of edges consists of either one colour, two colours or three colours: monochromatic, dichromatic or trichromatic.  
In this article we explore edge-colourings determined by disallowed triangle colour combinations, but also requiring others.  Thus, disallowing monochromatic triangles restricts to edge-coloured complete graphs within the Ramsey bound $R(3,3,\dots,3)$.  But what if in addition to disallowing monochromatic triangles, we also impose the dual constraint that all remaining colour combinations (trichromatic and dichromatic) are present: is it possible to find such a network?  These are natural combinatorial considerations in their own right, but there is an additional motivation by way of the algebraic foundations of qualitative reasoning, which finds wide application in AI settings around scheduling \cite{all}, navigation \cite{LRW,PSB} and geospatial positioning amongst others \cite{rcc8}.  The constraint language underlying typical qualitative reasoning systems determines a kind of non-associative relation algebra (in the sense of Maddux \cite{Mad82}), which is attracting considerable attention from a theoretical computer science perspective; see \cite{DLMSDVW,HJK19,InaEuz,lig,WHW} for example.  The inverse problem of deciding if a suitably defined non-associative algebra arises from a concrete constraint network is shown to be \texttt{NP}-complete in \cite{HJK19}.  The present work focusses on a natural family of combinatorially intriguing cases, that we find have nontrivial solution, and provide some novel extensions of classically understood connections between certain associative relation algebras and combinatorial geometries, such as in Lyndon \cite{LR61}.


More formally, let $C^+ = \{1',c_1,\dots,c_n\}$ be a finite set of \emph{colours}, let $U$ be any
set and let 
$\lambda\colon U\times U \to C^+$ be a surjective map
such that $\lambda(x,y) = \lambda(y,x)$ and
$\lambda(x,y) = 1'$ if{}f $x=y$. Then,
$\{\lambda^{-1}(1'),\lambda^{-1}(c_i), \dots, \lambda^{-1}(c_n)\}$ is a  
partition of $U\times U$ with every $\lambda^{-1}(c_i)$ symmetric and
$\lambda^{-1}(1')$ the identity relation. We call $\lambda$
an \emph{edge $n$-colouring} of the complete graph with the set of vertices $U$,
informally thinking of $1'$ as an invisible colour. The set
$C = C^+\setminus \{1'\}$ is the set of \emph{proper colours}.  We will consider natural
conditions on the colourings, forbidding certain triangles and requiring others.
To forbid the occurrence of
monochromatic triangles for example, we can impose the following condition 
$$
\forall a,b,c\in C\colon  |\{a,b,c\}| = 1 \Rightarrow \lambda^{-1}(a)\circ
\lambda^{-1}(b)\cap \lambda^{-1}(c) = \varnothing 
$$
where $\circ$ is the relational composition.  We will see shortly that from the point of view of the
algebras, strengthening the condition above to an equivalence
$$
\forall a,b,c\in C\colon  |\{a,b,c\}| = 1 \Leftrightarrow \lambda^{-1}(a)\circ
\lambda^{-1}(b)\cap \lambda^{-1}(c) = \varnothing 
$$
is more natural. Here we have all monochromatic triangles forbidden, and 
moreover all non-monochromatic triangles must occur.  As noted earlier, it is not
immediately clear whether such colourings exist: the Ramsey Theorem gives an
upper bound on the size of the graphs, the occurrence of all non-monochromatic
triangles gives a lower bound -- conceivably greater than the upper bound. 

In general, letting $F$ be any subset of $\{1,2,3\}$ we arrive at 8 natural
conditions of the form  
$$
\forall a,b,c\in C\colon  |\{a,b,c\}| \in F \Leftrightarrow \lambda^{-1}(a)\circ
\lambda^{-1}(b)\cap \lambda^{-1}(c) = \varnothing 
$$
and 8 corresponding existence questions; these are tied precisely to the
algebraic properties of interest in Lemma \ref{lem:combinatorial} below.  We
will also consider the weaker 
conditions, with implication, but we will see that they do not determine a
unique algebraic object.

\section{Nonassociative relation algebras and their representations}
In this section we recall basic constructions and ideas from the theory of relation algebras, which we then tie to the combinatorial conditions discussed informally at the start of the article.  This culminates in Lemma \ref{lem:combinatorial} of the next section, which shows that the basic hierarchy of strengths of representability (strong implies qualitative implies feeble) in the algebraic setting correspond precisely to three natural constraints around forbidden colour combinations and required colour combinations.  The reader wishing to skip the algebraic connections can survive most of the remainder of the article with only the notation presented in Definition \ref{defn:chromatic} and by using Lemma \ref{lem:combinatorial} as a Rosetta stone to relate algebraic terminology to combinatorial properties on edge-coloured complete graphs.

Nonassociative algebras were introduced and first studied in~\cite{Mad82}.
A \emph{nonassociative relation algebra} or simply a
\emph{nonassociative algebra}  (NA)
is an algebra $\alg{A} = (A, \wedge, \vee, \comp, \conv{}, \neg, 0, 1, 1')$ with the following properties.
\begin{enumerate}
\item $(A, \vee, \wedge, \neg, 0, 1) $ is a Boolean algebra.
\item $(A, \comp, \conv{}, 1')$ is an involutive groupoid with unit.  That is, $\comp$ is a binary operation,~$\conv{}$ unary,  $1'$ nullary, and
  the following identities hold:
\begin{enumerate}
\item $1'\comp x = x = x\comp 1'$
\item $\conv{\conv{x}} = x$.
\end{enumerate}
\item The non-Boolean operations are \emph{operators}.  That is, the following further
identities hold:
\begin{enumerate}
\item $x\comp(y\vee z) = (x\comp y)\vee (x\comp z)$ and
$(x\vee y)\comp z = (x\comp z)\vee (y\comp z)$
\item $x\comp 0 = 0 = 0\comp x$
\item $\conv{(x\vee y)} = \conv{x}\vee\conv{y}$
\item $ \conv{0}= 0$.
\end{enumerate}
\item $(x\comp y)\wedge z=0$ iff  $(\conv{x}\comp z)\wedge y=0$ iff
$(z\comp \conv{y})\wedge x=0$.
\end{enumerate}
The equivalences in (4) are known as the \emph{triangle laws}, or \emph{Peircean
  laws}.
If the operation $\comp$  is associative, then
$\mathbf{A}$ is a \emph{relation algebra} (RA), in the sense of Tarski~\cite{tar}; see a monograph such as Hirsch and Hodkinson~\cite{HH02} or Maddux~\cite{Mad06}. For any set~$U$
and an equivalence relation $E\subseteq U\times U$, the 
powerset $\pw(E)$ carries a natural relation algebra structure.
The Boolean operations are set-theoretical,
$\comp$ is the relational
composition, $\conv{}$ the converse, and $1'$ the identity relation.
Denote this algebra by $\mathfrak{Re}_E(U)$. If $E = U\times U$ we write
$\mathfrak{Re}(U)$, so $\mathfrak{Re}(U)$ is the algebra of all binary relations
on $U$. A relation algebra $\mathbf{A}$
is \emph{representable} if there is an injective homomorphism 
$\phi\colon \mathbf{A}\to\mathfrak{Re}_E(U)$, for some
$U$ (called the \emph{base} of the representation) and~$E$.
In particular we have
\begin{enumerate}
\item[(r1)] $0^\phi=\varnothing$, $1^\phi= E$, $(1')^\phi = Id_U$,
\item[(r2)] $\phi$ is a homomorphism of Boolean algebras,
\item[(r3)] $(\conv{a})^\phi= \conv{(a^\phi)}$,
\item[(r4)] $(a\comp b)^\phi = a^\phi\circ b^\phi$.
\end{enumerate}
If $E = U\times U$ the representation is called \emph{square}.
To avoid confusion with `representation'
in a generic sense of `any kind of representation', we will refer to
representations above as \emph{strong representations}. 
Two weakenings of this notion of representation, which apply to nonassociative
algebras, were defined  in~\cite{HJK19} and we now recall them.  
A set $\mathcal{H}\subseteq \pw(U\times U)$ for some set $U$, is called a
\emph{herd} if  
\begin{enumerate}
\item[(H1)] $\mathcal{H}$ is a Boolean set algebra with top element $U\times U$,
\item[(H2)] $Id_U\in \mathcal{H}$,
\item[(H3)] $S\in\mathcal{H}$ implies $\conv{S}\in\mathcal{H}$. 
\end{enumerate}
In any herd $\mathcal{H}$, the smallest element $R$ containing $S \circ T$, if
it exists, is called 
the \emph{weak composition} of $S$ and $T$, and is often denoted by
$S\diamond T$.  When $\mathcal{H}$ is finite (or more generally, complete) as a Boolean algebra then $S \diamond T$ always exists as we may simply intersect the elements of $\mathcal{H}$ that fully contain the relation $S \circ T$.

A nonassociative algebra $\mathbf{A}$  
is said to be \emph{qualitatively representable} 
if there is a bijection $\phi$ from ${A}$ to a herd
$\mathcal{H}$ such that
\begin{enumerate}
\item[(q1)] $0^\phi=\varnothing$, $1^\phi=U\times U$, $(1')^\phi = Id_U$,
\item[(q2)] $\phi$ is a homomorphism of Boolean algebras,
\item[(q3)] $(\conv{a})^\phi= \conv{(a^\phi)}$,
\item[(q4)] $a^\phi\circ b^\phi\subseteq c^\phi
  \leftrightarrow a\comp b\leq c$.
\end{enumerate}
Note that (q4) states that $c^\phi$ is the smallest solution
to $c^\phi \supseteq a^\phi\circ b^\phi$, and so
$c^\phi = a^\phi\diamond b^\phi$.
If~(q4) is strengthened to (r4), then the qualitative representation
$\phi$ is a strong square representation.

On the other hand,
if~(q4) is weakened to $\varphi(a\comp b)\supseteq \varphi(a)\circ\varphi(b)$, we
obtain a still weaker notion of representation, called \emph{feeble}
in~\cite{HJK19}. The next lemma combines well known facts on representations of
relation algebras with parts of Lemmas~12 and~18 of~\cite{HJK19}.
\begin{lemma}\label{q-rep-char}
Let $\mathbf{A}$ be complete and atomic with the set of atoms $At(A)$, and
let $\varphi\colon\mathbf{A} \to \pw(U\times U)$ satisfy (1)--(3) above.
Then \textup{(R1)} is equivalent to \textup{(R2)},
\textup{(Q1)} is equivalent to \textup{(Q2)}, and \textup{(F1)} is
equivalent to \textup{(F2)}. 
\begin{itemize}
\item[(R1)] $\phi$ is a strong square representation of $\mathbf{A}$.
\item[(R2)] For all $a,b,c\in At(A)$ we have
$(a\comp b)\wedge c \neq 0$ iff for all $x,y\in U$ such that
$(x,y)\in c^\phi$, there exists $z\in U$ with
$(x,z)\in a^\phi$ and $(z,y)\in b^\phi$.
\item[(Q1)] $\phi$ is a qualitative representation of $\mathbf{A}$.
\item[(Q2)] For all $a,b,c\in At(A)$ we have
  $(a\comp b)\wedge c \neq 0$ iff there exist $x,y,z\in U$ such that
  $(x,z)\in a^\phi$, $(z,y)\in b^\phi$, $(x,y)\in c^\phi$.
\item[(F1)] $\phi$ is a feeble representation of $\mathbf{A}$.
\item[(F2)] For all $a,b,c\in At(A)$ if there exist $x,y,z\in U$ such that
$(x,z)\in a^\phi$, $(z,y)\in b^\phi$, $(x,y)\in c^\phi$, then  
$(a\comp b)\wedge c \neq 0$. 
\end{itemize}
\end{lemma}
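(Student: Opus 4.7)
The plan is to reduce each of the three biconditionals to its atom-level instance, using that $\mathbf{A}$ is complete atomic and $\varphi$ is a Boolean isomorphism onto its image. Two ingredients are pervasive. First, if $c$ is an atom of $\mathbf{A}$ then $(a\comp b)\wedge c \neq 0$ iff $c \leq a\comp b$. Second, since $\varphi$ satisfies (1)--(3) and $\mathbf{A}$ is complete atomic, the family $\{c^\varphi : c \in At(A)\}$ forms a partition of $U\times U$; consequently every pair $(x,y)$ lies in a unique $c^\varphi$ with $c$ atomic, and for any $d \in A$ one has $\varphi(d)=\bigcup\{c^\varphi : c \in At(A),\ c\leq d\}$. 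These facts will let me freely translate "belongs to" information back and forth between atoms and arbitrary elements.

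For (R1)$\Leftrightarrow$(R2) I would fix atoms $a,b,c$ and translate directly: $c^\varphi \subseteq a^\varphi\circ b^\varphi$ is precisely the universally quantified witness condition in (R2), while $c^\varphi \subseteq (a\comp b)^\varphi$ is equivalent to $c\leq a\comp b$, i.e.\ to $(a\comp b)\wedge c\neq 0$. Under (R1) the two inclusions coincide, giving (R2). Conversely, (R2) delivers (r4) atomwise, and the general case follows by decomposing $a,b$ as joins of atoms and invoking additivity of $\comp$, of $\varphi$, and of relational composition.

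For (Q1)$\Leftrightarrow$(Q2) I would apply (q4) with $\neg c$ in place of $c$ to rewrite it as $a^\varphi\circ b^\varphi \cap c^\varphi=\varnothing \Leftrightarrow (a\comp b)\wedge c=0$. Taking contrapositives on both sides yields exactly (Q2) when $a,b,c$ are atoms, so (Q1)$\Rightarrow$(Q2) is a matter of specialisation. For (Q2)$\Rightarrow$(Q1), I would recover (q4) for arbitrary $a,b,c$ through the chain $a\comp b\leq c$ iff $(a\comp b)\wedge c'=0$ for every atom $c'\leq\neg c$ iff, by (Q2), no atom $c'\leq\neg c$ meets $a^\varphi\circ b^\varphi$ iff $a^\varphi\circ b^\varphi\subseteq c^\varphi$; the final step uses the atomic partition of $U\times U$.

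For (F1)$\Leftrightarrow$(F2) the forward direction is immediate: given witnesses $x,y,z$, the pair $(x,y)$ lies in $(\varphi(a)\circ\varphi(b))\cap c^\varphi \subseteq \varphi(a\comp b)\cap\varphi(c)=\varphi((a\comp b)\wedge c)$, forcing $(a\comp b)\wedge c\neq 0$. For the converse, I would take an arbitrary $(x,y)\in\varphi(a)\circ\varphi(b)$ with intermediate $z$, choose the unique atoms $a',b',c'$ whose images contain $(x,z)$, $(z,y)$, $(x,y)$ respectively, observe $a'\leq a$ and $b'\leq b$, apply (F2) to obtain $c'\leq a'\comp b'\leq a\comp b$, and conclude $(x,y)\in (c')^\varphi \subseteq \varphi(a\comp b)$. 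The one delicate point across all three proofs is the complete-additivity identity $\varphi(d)=\bigcup\{c^\varphi : c\leq d,\ c\in At(A)\}$ used in the first paragraph: this is stronger than the finite additivity demanded of a Boolean homomorphism, and I would flag and verify it up front from the standing hypotheses that $\mathbf{A}$ is complete atomic and that $\varphi$ is a bijection onto a field of subsets of $U\times U$ with $1^\varphi=U\times U$.
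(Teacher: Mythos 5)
The paper itself offers no proof of this lemma --- it is stated as a combination of folklore on representations of atomic relation algebras with parts of Lemmas~12 and~18 of [HJK19] --- and your atom-by-atom translation is exactly the standard argument those sources use; in particular your treatments of (Q1)$\Leftrightarrow$(Q2) and of (F1)$\Rightarrow$(F2) are correct. However, two of your steps do not go through as written.

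The first concerns (R2)$\Rightarrow$(R1). You read (R2) as ``$c\leq a\comp b$ iff $c^\varphi\subseteq a^\varphi\circ b^\varphi$'' and assert that this ``delivers (r4) atomwise''. It delivers only one inclusion: $(a\comp b)^\varphi=\bigcup\{c^\varphi:c\leq a\comp b\}\subseteq a^\varphi\circ b^\varphi$. For the reverse inclusion you must show that a single pair $(x,y)\in c^\varphi$ possessing a witness $z$ forces $c\leq a\comp b$; but under your reading, the failure of ``$c^\varphi\subseteq a^\varphi\circ b^\varphi$'' for a forbidden triple says only that \emph{some} point of $c^\varphi$ lacks a witness, not that every point does, so $c^\varphi$ may meet $a^\varphi\circ b^\varphi$ without being contained in it and nothing stops $a^\varphi\circ b^\varphi\not\subseteq(a\comp b)^\varphi$. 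What is needed is the soundness condition (F2), which is available exactly when the biconditional in (R2) is read \emph{inside} the quantifier over $(x,y)\in c^\varphi$ (the intended reading: at every edge of $c^\varphi$, consistency of the triple is equivalent to the existence of a witness). You must either adopt that reading explicitly or derive (F2) from (R2); as written the step fails.

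The second concerns the identity $\varphi(d)=\bigcup\{c^\varphi: c\in At(A),\ c\leq d\}$, which you rightly identify as load-bearing for all three right-to-left directions but wrongly claim can be ``verified up front from the standing hypotheses''. Conditions (1)--(3) make $\varphi$ a finitely additive Boolean homomorphism; even an isomorphism onto a field of subsets preserves abstract suprema only, and the supremum of $\{c^\varphi: c\leq d\}$ computed inside the image field need not equal the set-theoretic union. Thus for infinite $At(A)$ the atom images need not cover $U\times U$ (this is precisely the distinction between representations and complete representations in the sense of Hirsch and Hodkinson), and your constructions of the ``unique atoms $a',b',c'$'' containing a given pair break down. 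The identity is automatic when $\mathbf{A}$ is finite, which covers every application of the lemma in this paper apart from the infinite-cardinality section (where the representations are built atom-by-atom and so satisfy it by construction); you should restrict to that case or add the covering property as an explicit hypothesis rather than claim to derive it.
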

Since $a,b,c$ are atoms, the equivalence
$(a\comp b)\wedge c \neq 0 \Leftrightarrow a\comp b\geq c$
holds in $\mathbf{A}$. Strong representations must reflect this
property by (R2), but qualitative and feeble representations do not need to.

\begin{defn}
Let $\mathbf{A}$ be a nonassociative algebra. The \emph{atom structure}
of $\mathbf{A}$ is the structure $At(\mathbf{A}) = (At(A),\breve{\ }, I,T)$,
where $At(A)$ is the set of atoms of $\mathbf{A}$,
$\breve{\ }$ is the converse operation of $\mathbf{A}$ restricted to atoms,
$I = \{x\in At(A) \mid  x\leq 1' \}$ is the set of
\emph{subidentity} atoms \textup(other atoms are called \emph{diversity} atoms\textup),
$T = \{(x,y,z)\in At(A)^3\mid  z \leq x\comp y\}$ is
the set of \emph{consistent} triples. Triples $(x,y,z)\notin T$ are called
\emph{forbidden}.

Conversely, for a relational structure $\mathcal{X} = (X, \conv{\ }, I, T)$
where $\conv{\ }$ is a unary function, $I\subseteq X$, and
$T\subseteq X^3$, the \emph{complex algebra} $\mathfrak{Cm}(\mathcal{X})$ of
$\mathcal{X}$ is the algebra
$(\pw(X),\cap,\cup,\comp, \neg, \conv{\ },\varnothing, X, I)$, where
for $R,S\subseteq X$, we put
$\conv{S} = \{\conv{s}: s\in S\}$, and
$S\comp R = \{u\in X: (s,r,u)\in T, \text{ for some } s\in S, r\in R\}$.  
\end{defn}

If $\mathbf{A}$ is an atomic nonassociative algebra, then
the map  $x\mapsto \{a\in At(A): a\leq x\}$
is an embedding of $\mathbf{A}$ into $\mathfrak{Cm}(At(\mathbf{A}))$. 
If $\mathbf{A}$ is moreover complete, then it is an isomorphism.
It is convenient and very common to present complete and atomic nonassociative
algebras in terms of their atom structures. 
The next lemma, proved in~\cite{Mad82}, characterises
structures that are 
atom structures of nonassociative algebras.

\begin{lemma}
Let $\mathcal{X} = (X, \conv{\ }, I, T)$ be a structure such that $I$ is a
subset of $X$, 
$T$ is a subset of $X^3$, and $\conv{\ }$ is a function satisfying
$\conv{\conv{a}} = a$. The following are equivalent\textup:
\begin{itemize}
\item $\mathcal{X}$ is the atom structure of some nonassociative algebra.    
\item For all $a,b,c\in X$ we have
\begin{itemize}
\item $b =c$ iff there is some $e\in I$ such that $(e,b,c)\in T$ 
\item if $(a,b,c)\in T$ then $(\conv{c},a,\conv{b})\in T$ and
$(\conv{b},\conv{a},\conv{c})\in T$. 
\end{itemize}   
\end{itemize}
\end{lemma}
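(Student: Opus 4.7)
The plan is to prove the two directions separately. Forward, each condition is read off a NA axiom at the level of atoms; backward, we verify that the complex algebra $\mathfrak{Cm}(\mathcal{X})$ is a NA whose atom structure recovers $\mathcal{X}$. Since every atomic NA embeds into its own complex algebra (as noted in the paragraph preceding the lemma), and the two algebras share the same atom structure, we may work in the complete atomic setting throughout, in particular writing $1' = \bigvee I$.

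For the forward direction, suppose $\mathcal{X} = At(\mathbf{A})$. From $b = 1' \comp b = \bigvee_{e \in I}(e \comp b)$ together with the fact that $b$ is an atom of the complete atomic Boolean reduct, we obtain some $e \in I$ with $b \leq e \comp b$, i.e.\ $(e, b, b) \in T$, giving the forward direction of the identity hypothesis. Conversely, $(e, b, c) \in T$ with $e \in I$ forces $c \leq e \comp b \leq 1' \comp b = b$, and since both $b$ and $c$ are atoms, $c = b$. The Peircean hypothesis is the triangle laws read at the atom level: $(a, b, c) \in T$ says $(a \comp b) \wedge c \neq 0$, and the two triangle-law equivalences give $(\conv{a} \comp c) \wedge b \neq 0$ and $(c \comp \conv{b}) \wedge a \neq 0$, hence $(\conv{a}, c, b) \in T$ and $(c, \conv{b}, a) \in T$. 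These two basic rotations generate a group isomorphic to $S_3$ acting on triples-with-converses (the rotation $(a,b,c) \mapsto (\conv{c}, a, \conv{b})$ has order three and the flip $(a,b,c) \mapsto (\conv{b}, \conv{a}, \conv{c})$ has order two), producing all six Peircean shifts of $(a,b,c)$, and in particular the two displayed in the statement.

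For the backward direction, take $\mathfrak{Cm}(\mathcal{X})$ as the candidate NA, with atoms the singletons $\{x\}$. The Boolean axioms, additivity of $\comp$ and $\conv{\ }$, annihilation by $\varnothing$, and $\conv{\conv{S}} = S$ are immediate from the set-theoretic definitions. The identity axiom splits into $I \comp S = S$ and $S \comp I = S$. The first is direct: $I \comp S \subseteq S$ follows from the identity hypothesis applied to $(e, s, u) \in T$ with $e \in I$, and the reverse uses its converse direction to produce, for each $s \in S$, an $e \in I$ with $(e, s, s) \in T$. For $S \comp I$ the identity hypothesis does not apply immediately because the $I$-witness sits in the second coordinate of $(s, e, u)$; here one iterates the two given Peircean shifts to rotate the triple to $(e, \conv{u}, \conv{s}) \in T$ and then applies the hypothesis to force $u = s$. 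The reverse inclusion applies the identity hypothesis to $\conv{s}$, yielding $(e, \conv{s}, \conv{s}) \in T$, which then rotates to $(s, e, s) \in T$. The set-level triangle laws follow from the same six-fold cyclic symmetry of consistent triples. Finally, unwinding the definitions shows that the atom structure of $\mathfrak{Cm}(\mathcal{X})$ is isomorphic to $\mathcal{X}$ via $x \mapsto \{x\}$.

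The main obstacle is the right identity $S \comp I = S$: the identity hypothesis is stated asymmetrically, with the $I$-element pinned to the first coordinate, and bridging this to triples of the form $(s, e, u)$ requires synthesizing all six cyclic rotations from the two Peircean generators explicitly supplied by the second hypothesis.
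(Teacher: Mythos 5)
The paper gives no proof of this lemma---it is quoted from Maddux \cite{Mad82}---so there is no in-text argument to compare against; your proof is correct and is the standard one. The forward direction reads the unit law and the triangle laws off at the level of atoms, and your observation that the two rotations supplied by the triangle laws generate all six Peircean transforms is exactly what is needed to recover the two triples named in the statement; the backward direction correctly isolates the only non-routine point, namely $S\comp I = S$, which you resolve by rotating $(s,e,u)$ around to $(e,\conv{u},\conv{s})$ so that the identity hypothesis (whose $I$-witness is pinned to the first coordinate) becomes applicable. The one informality is the opening reduction to the complete atomic setting: the equation $1'\comp b=\bigvee_{e\in I}(e\comp b)$ does not follow from the (finite) additivity axioms alone, though it does hold in the complex algebra, into which the paper has already asserted every atomic nonassociative algebra embeds; if you prefer to avoid leaning on that assertion, apply the triangle law to $(1'\comp b)\wedge b\neq 0$ to get $(b\comp\conv{b})\wedge 1'\neq 0$, choose an atom $e$ below it by atomicity, and rotate back to obtain $(e,b,b)\in T$.
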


The triples $(a,b,c)$, $(\conv{a},c,b)$, $(c,\conv{b},a)$,
$(b,\conv{c},\conv{a})$, 
$(\conv{c},a,\conv{b})$, $(\conv{b},\conv{a},\conv{c})$
are called the \emph{Peircean transforms} of $(a,b,c)$.  See Figure \ref{fig:Peircean} for a pictorial explanation of Peircean triples.
The relation $T$ above is always closed under Peircean transforms. 
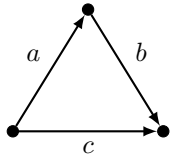
\begin{figure}[h]
\begin{tikzpicture}[auto]
\node [bdot] (1) at (0,0) {};
\node [bdot] (2)  at (1,1.62) {};
\node [bdot] (3) at (2,0) {};
\draw [thick,-latex] (1)  to node {$a$} (2);
\draw [thick,-latex] (2) to node {$b$} (3);
\draw [thick,-latex] (1) to node [swap] {$c$} (3);
\end{tikzpicture}
\caption{If $(a,b,c)\in T$, then the Peircean transforms correspond to all of the other cyclic traverses of this triangle.  If any one is present in a network, then all the others are too.  In this article, all algebras are symmetric, so the arrow direction is not important.}\label{fig:Peircean}
\end{figure}

There are many finite relation algebras that admit strong representations over infinite sets but no finite set, and in general deciding strong representability for finite relation algebras is algorithmically undecidable even \cite{HH01}.  Qualitative representability is decidable, though is \texttt{NP}-complete \cite[Theorem 15]{HJK19}.  Much of the simplification is due to the following useful fact. 
\begin{lemma}\label{lem:finiterep} \cite[Lemma 13]{HJK19}
If an atomic nonassociative algebra $\mathbf{A}$ admits a qualitative representation, then it admits a qualitative representation as relations on set of size at most $3|At(\mathbf{A})|$ points.
\end{lemma}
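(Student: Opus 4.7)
The plan is to pass to a sub-representation on a small subset of the original base set. Let $\phi\colon \mathbf{A} \to \pw(U \times U)$ be the given qualitative representation, and let $T \subseteq At(\mathbf{A})^3$ denote the set of consistent triples $(a,b,c)$ with $(a \comp b) \wedge c \neq 0$. Applying (Q2) of Lemma~\ref{q-rep-char}, for each $(a,b,c) \in T$ fix a witnessing triangle $(x,y,z)$ in $U^3$ with $(x,z)\in a^\phi$, $(z,y)\in b^\phi$, $(x,y)\in c^\phi$; for each atom $a$, also fix a representative pair $(p_a, q_a) \in a^\phi$ (taking $p_a = q_a$ when $a$ is subidentity). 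Let $U' \subseteq U$ be the union of all points so chosen, and define $\phi'\colon \mathbf{A} \to \pw(U' \times U')$ by $\phi'(x) \deq \phi(x) \cap (U' \times U')$.

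The first step is to verify that $\phi'$ is a qualitative representation. Its image is a herd because $\phi$ is a Boolean homomorphism that respects converse, with the new top element $U' \times U'$ and identity $Id_{U'}$; axioms (q1) and (q3) restrict cleanly; and the atom-structure condition (Q2) of Lemma~\ref{q-rep-char} holds for $\phi'$ because we chose explicit witnesses for every consistent triple, while every inconsistent triple remains unwitnessed in $U'$ since it was already unwitnessed in $U$. Through the characterization lemma this yields (q4), completing the qualitative representation.

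The second step is bounding $|U'|$. A naive count yields $|U'| \le 3|T| + 2|At(\mathbf{A})|$, which already establishes finite representability. To sharpen to $3|At(\mathbf{A})|$, we exploit the Peircean closure of $T$ so that one witnessing triangle covers the full orbit of six transforms of its colour triple at once; we reuse the fixed representative edges $(p_a,q_a)$ as sides of witness triangles for trivial triples of the form $(1',a,a)$ and $(a,1',a)$; and we identify witness endpoints across distinct triples whenever colour constraints permit. Subidentity atoms contribute a single point each rather than an edge, further compressing the count.

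The main obstacle is this final combinatorial bookkeeping: arranging witness choices globally so that each atom is responsible for at most three points of $U'$, rather than the naive per-triple count of $O(|At(\mathbf{A})|^3)$. The argument proceeds by iterative pruning, starting from the representative edges $(p_a,q_a)$ and greedily attaching witnesses for each as-yet-unrealised triple to the existing point set, verifying that the amortised cost per atom remains bounded by three.
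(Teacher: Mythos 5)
The paper does not prove this lemma at all: it is quoted verbatim from \cite[Lemma 13]{HJK19}, so there is no internal proof to compare against. Judged on its own terms, your proposal establishes the easy half of the statement and leaves the hard half unproved. The restriction step is fine: choosing one witnessing triangle per consistent triple of atoms plus one representative edge per atom, and cutting $\phi$ down to $\phi'(x)=\phi(x)\cap(U'\times U')$, does give a qualitative representation on $U'$ (no forbidden triangle can appear by restricting, every consistent triple is witnessed by construction, and injectivity is preserved because every atom retains a nonempty image). But this only yields $|U'|\le 3|T|+2|At(\mathbf{A})|$, which is cubic in $|At(\mathbf{A})|$. The entire content of the stated bound is the reduction from cubic to linear, and at exactly that point your argument dissolves into ``identify witness endpoints whenever colour constraints permit'' and an ``amortised cost per atom bounded by three'' that is asserted, never verified. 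The Peircean-orbit saving is only a factor of $6$, and reusing the edges $(p_a,q_a)$ for triples involving $1'$ removes only $O(|At(\mathbf{A})|)$ of the $O(|At(\mathbf{A})|^3)$ triples, so neither device changes the order of growth.

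More seriously, the strategy of pruning the \emph{given} base $U$ appears incapable of reaching the bound at all: nothing forces an arbitrary qualitative representation to contain a $3|At(\mathbf{A})|$-point subset witnessing every consistent triple. For instance, for a Ramsey-type algebra one can arrange (using that the multicolour Ramsey number $R(3;n)$ grows far faster than $n^3$) a representation in which the witnessing triangles for distinct trichromatic triples are pairwise vertex-disjoint; any witnessing subset of such a base has $\Omega(|At(\mathbf{A})|^3)$ points, so no sub-representation of linear size exists. To obtain the $3|At(\mathbf{A})|$ bound one must \emph{build a new small base} rather than select points from the old one, which is why the greedy/amortised scheme you sketch cannot be completed as stated. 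You would need to either supply that construction or, as the paper does, cite \cite{HJK19} for it.
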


\section{Chromatic algebras}

\begin{defn}\label{defn:chromatic}
Let $S\subseteq \{1,2,3\}$.
We define a nonassociative algebra $\mathfrak{E}_{n+1}^S$
with $n+1$ atoms as the complex algebra $\mathfrak{Cm}(\mathcal{C})$ of
the structure $\mathcal{C} = (C^+, \conv{\ }, I, T)$, where
$C^+ = \{1',c_1,\dots,c_n\}$, $\conv{c} = c$ for any $c\in C^+$,
$I = \{1'\}$, and $T$ is given by 
\begin{enumerate}
\item $\forall b,c\in C^+ : b =c \Leftrightarrow (1',b,c)\in T$,
\item $\forall a,b,c\in C:  |\{a,b,c\}| \in S \Leftrightarrow
  (a, b, c)\in T$, where $C = C^+\setminus\{1'\}$. 
\end{enumerate} 
We call algebras $\mathfrak{E}_{n+1}^S$ \emph{chromatic}. 
\end{defn}
It is easy to see that $T$ is closed under the Peircean transforms.
Note that $S$ gives the types of \emph{consistent} triples rather than
forbidden ones, which would be more in line with the remarks in
Section~\ref{sec:intro}. We stated the definition this way to keep the notation
$\mathfrak{E}_{n+1}^S$ in agreement with~\cite{Mad06}. Expressing (2) in terms of the
operations in the complex algebra $\mathfrak{Cm}(\mathcal{C})$,
and identifying singletons with their single elements in a
set-theoretically incorrect but notationally convenient way
we get
$$
\forall a,b,c\in C:  |\{a,b,c\}| \in S \Leftrightarrow
  (a\comp b)\wedge c \neq 0.
  $$
Then, putting $F = \{1,2,3\}\setminus S$, we arrive at an equivalent version
of (2)
$$
\forall a,b,c\in C:  |\{a,b,c\}| \in F \Leftrightarrow
  (a\comp b)\wedge c = 0
$$
matching the remarks in Section~\ref{sec:intro} and the statement
of Lemma~\ref{q-rep-char}. With this the next lemma is not difficult to prove: the
representation (strong, qualitative or feeble) on the atoms is 
precisely $\lambda^{-1}$.  

\begin{lemma}\label{lem:combinatorial}
Let $S\subseteq \{1,2,3\}$ and $F = \{1,2,3\}\setminus S$. The following hold.
\begin{enumerate}  
\item 
$\mathfrak{E}_{n+1}^S$ is strongly representable if and only if
there exists an edge $n$-colouring $\lambda$ of a complete graph, satisfying
\begin{enumerate}
\item $\forall a,b,c\in C\colon  |\{a,b,c\}| \in F \Leftrightarrow \lambda^{-1}(a)\circ
  \lambda^{-1}(b)\cap \lambda^{-1}(c) = \varnothing$, and
\item $\forall a,b,c\in C\colon  |\{a,b,c\}| \in S \Leftrightarrow \lambda^{-1}(a)\circ
  \lambda^{-1}(b)\supseteq \lambda^{-1}(c)$.  
\end{enumerate}  
\item $\mathfrak{E}_{n+1}^S$ is qualitatively representable if and only if
there exists an edge $n$-colouring $\lambda$ of a complete graph, satisfying
\begin{enumerate}
\item $\forall a,b,c\in C\colon  |\{a,b,c\}| \in F \Leftrightarrow \lambda^{-1}(a)\circ
  \lambda^{-1}(b)\cap \lambda^{-1}(c) = \varnothing$.
\end{enumerate}
\item $\mathfrak{E}_{n+1}^S$ is feebly representable if and only if
there exists an edge $n$-colouring $\lambda$ of a complete graph
satisfying
\begin{enumerate}
\item[(a${}'$)] $\forall a,b,c\in C\colon  |\{a,b,c\}| \in F \Rightarrow \lambda^{-1}(a)\circ
  \lambda^{-1}(b)\cap \lambda^{-1}(c) = \varnothing$.
\end{enumerate}
\end{enumerate}
\end{lemma}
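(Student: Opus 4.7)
The core of the proof is a direct correspondence between candidate representations of $\mathfrak{E}_{n+1}^S$ on a base $U$ and edge $n$-colourings of the complete graph on $U$. Because the algebra is atomic, complete, and has trivial converse, any $\phi$ satisfying (q1)--(q3) is determined on atoms, and the relations $\phi(1'),\phi(c_1),\dots,\phi(c_n)$ partition $U\times U$ into symmetric relations with $\phi(1')=Id_U$ --- precisely the data of an edge $n$-colouring $\lambda$ via $\lambda^{-1}(c)=\phi(c)$. Conversely, given such $\lambda$, I set $\phi(c)=\lambda^{-1}(c)$ on proper atoms, extend to arbitrary elements by taking unions of atom images, and check the compositional axiom atom-by-atom using Lemma~\ref{q-rep-char}. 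The extension from atoms to general elements is routine from the additivity of $\comp$ over $\vee$ and of $\circ$ over $\cup$.

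The three translations are then essentially dictated by Lemma~\ref{q-rep-char}. For part~(2), condition (Q2) on proper atoms $a,b,c\in C$ reads $(a\comp b)\wedge c\neq 0 \Leftrightarrow \lambda^{-1}(a)\circ\lambda^{-1}(b)\cap\lambda^{-1}(c)\neq\varnothing$, and the left-hand side is $|\{a,b,c\}|\in S$ by Definition~\ref{defn:chromatic}, so the contrapositive is exactly (a). For part~(3), (F2) is the one-directional version and yields (a$'$) in the same way. For part~(1), (R2) is stronger: the forward direction becomes (b) (a consistent triple forces $\lambda^{-1}(c)\subseteq \lambda^{-1}(a)\circ\lambda^{-1}(b)$), while the reverse direction, together with surjectivity of $\lambda$, matches (a).

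The one genuinely delicate point is that Lemma~\ref{q-rep-char} also imposes atom conditions involving the subidentity $1'$, which are not visible in (a), (b), or (a$'$). Most of these follow immediately from the partition property and symmetry of $\lambda$ (e.g.\ $\lambda^{-1}(a)\circ\lambda^{-1}(b)\cap Id_U\neq\varnothing$ iff $a=b$, since the sole witness must be an edge coloured both $a$ and $b$). The strong case, however, applied to $a=b\in C$ and $c=1'$, says $(a\comp a)\wedge 1'\neq 0$, and forces every vertex of $U$ to lie on some $a$-coloured edge. I will deduce this from (a) and~(b) as follows: pick any $a$-coloured edge $(x_0,y_0)$ (available by surjectivity), let $v\in U$ be any other vertex, and consider the triangle $v,x_0,y_0$, whose colours $b\deq\lambda(v,x_0)$, $c\deq\lambda(v,y_0)$, $a$ lie in $C$; condition (a) forces $|\{a,b,c\}|\in S$, so by (b) we have $\lambda^{-1}(a)\circ\lambda^{-1}(b)\supseteq\lambda^{-1}(c)$, and applying this to $(v,y_0)\in\lambda^{-1}(c)$ produces a vertex $z$ with $(v,z)\in\lambda^{-1}(a)$.

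This subidentity bookkeeping is the main obstacle to overcome; once it is handled, the three equivalences fall out by straightforward unpacking of Definition~\ref{defn:chromatic} and Lemma~\ref{q-rep-char}.
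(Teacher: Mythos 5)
Your proposal is correct and takes essentially the same route as the paper, which gives no real proof beyond observing that the representation on atoms is $\lambda^{-1}$ and appealing to Lemma~\ref{q-rep-char}; your explicit treatment of the subidentity triples --- in particular deriving from (a) and (b) that every vertex lies on an edge of every colour, as demanded by $(a\comp a)\wedge 1'\neq 0$ under (R2) --- supplies a detail the paper omits entirely. One small correction to part (1): condition (a) does not follow from the reverse direction of (R2), which only yields $|\{a,b,c\}|\in F\Rightarrow\lambda^{-1}(c)\not\subseteq\lambda^{-1}(a)\circ\lambda^{-1}(b)$ (non-containment, weaker than empty intersection); rather, (a) holds because every strong square representation satisfies (q4) and hence (Q2), so it is inherited from your part-(2) analysis.
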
  
Note that the left-hand sides of (a) and (b) above are negations of one another,
but the right-hand sides are not. Thus (a) alone forces $\comp$ to be represented
as weak composition, but (a) together with (b) force $\comp$ to be represented
as composition.

\begin{lemma}\label{lem:feeble-expansion}
If $S\subseteq S'$ and $\mathfrak{E}_{n+1}^S$ has a feeble
representation given as a colouring, the same colouring is also a feeble
representation of $\mathfrak{E}_{n+1}^{S'}$.
\end{lemma}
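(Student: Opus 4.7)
The plan is to unwind both sides of the claim through Lemma~\ref{lem:combinatorial}(3), which is the only characterisation of feeble representability of a chromatic algebra in terms of a colouring that we have available. Note that the two algebras $\mathfrak{E}_{n+1}^S$ and $\mathfrak{E}_{n+1}^{S'}$ have the same atom set $C^+$, the same (trivial) converse, and the same subidentity $I = \{1'\}$; they differ only in the consistent-triple relation $T$. Hence the phrase ``the same colouring'' makes unambiguous sense, since any map $\lambda\colon U\times U\to C^+$ is usable for both algebras.

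First I would set $F = \{1,2,3\}\setminus S$ and $F' = \{1,2,3\}\setminus S'$, and observe that the assumption $S\subseteq S'$ is equivalent to $F'\subseteq F$. By Lemma~\ref{lem:combinatorial}(3) applied to $\mathfrak{E}_{n+1}^S$, the hypothesised colouring $\lambda$ satisfies
$$
\forall a,b,c\in C\colon |\{a,b,c\}|\in F \;\Rightarrow\; \lambda^{-1}(a)\circ\lambda^{-1}(b)\cap\lambda^{-1}(c)=\varnothing.
$$

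Next, for any triple $a,b,c\in C$ with $|\{a,b,c\}|\in F'$, the containment $F'\subseteq F$ places $|\{a,b,c\}|$ in $F$ as well, so the displayed implication above yields $\lambda^{-1}(a)\circ\lambda^{-1}(b)\cap\lambda^{-1}(c)=\varnothing$. This is precisely condition (a$'$) of Lemma~\ref{lem:combinatorial}(3) for $\mathfrak{E}_{n+1}^{S'}$, so a second appeal to that lemma concludes that $\lambda$ feebly represents $\mathfrak{E}_{n+1}^{S'}$.

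There is essentially no obstacle here: the result is a direct reflection of the fact that enlarging $S$ to $S'$ enlarges the set of consistent triples and hence weakens the forbidden-configuration condition on $\lambda$, so any colouring meeting the stronger constraint automatically meets the weaker one. The only step requiring care is the bookkeeping around complements, and noting that feeble representability, unlike qualitative representability, is defined only by an implication (no converse requirement on required triples), which is what makes the monotonicity in $S$ work so transparently.
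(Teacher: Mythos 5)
Your proof is correct and follows exactly the paper's argument: pass to the complements $F=\{1,2,3\}\setminus S$ and $F'=\{1,2,3\}\setminus S'$, note that $S\subseteq S'$ gives $F'\subseteq F$, and invoke Lemma~\ref{lem:combinatorial}(3)(a${}'$) in both directions. Yours is simply a more detailed write-up of the same two-line proof in the paper.
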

\begin{proof}
Let $F = \{1,2,3\}\setminus S$ and $F' = \{1,2,3\}\setminus S'$.
Then, $F'\subseteq F$ and the claim follows by
Lemma~\ref{lem:combinatorial}(3)(a${}'$).
\end{proof}

\section{Algebras $\mathfrak{E}_{n+1}^\varnothing$,
  $\mathfrak{E}_{n+1}^{\{1\}}$, $\mathfrak{E}_{n+1}^{\{1,2\}}$
and  $\mathfrak{E}_{n+1}^{\{1,2,3\}}$}\label{sec:easy}

For $\mathfrak{E}_{n+1}^\varnothing$ all triangles are
forbidden, so the only possible representation is the colouring of $K_2$ with a
single colour ($n=1$). For $\mathfrak{E}_{n+1}^{\{1\}}$ the only
possible representation is $K_m$ for $m\geq 3$ coloured with a single colour ($n=1$).
These representations are strong.

It follows from Theorem 422 of Maddux~\cite{Mad06} that
\(\mathfrak{E}_{n+1}^{\{1,2\}}\) is always strongly representable. 
Strong representations for $\mathfrak{E}_{n+1}^{\{1,2\}}$ can be easily built
using representability games of Hirsch and Hodkinson (see~\cite{HH02}), and  
a finite qualitative representation can be then extracted by Lemma \ref{lem:finiterep}.

Finite strong representations of $\mathfrak{E}_{n+1}^{\{1,2,3\}}$ are obtained
by Jipsen, Maddux and Tuza in~\cite{JMT95}.
A qualitative representation is much easier to
get: take the disjoint union of all possible triangles, and use an arbitrary
colour for all the missing edges.

\section{Algebras $\mathfrak{E}_{n+1}^{\{3\}}$}\label{sec:quasigr}

The algebra $\mathfrak{E}_{4}^{\{3\}}$ has a unique strong representation on $4$
points. For $n>3$, the algebras $\mathfrak{E}_{n+1}^{\{3\}}$ are not associative, so strong representability is impossible, making 
qualitative representability of particular interest.  Our construction for representability will be based around quasigroups, and we direct the 
reader to a text such as Smith \cite{Smi06} for further background.

Let $(Q,\cdot)$ be a commutative idempotent quasigroup on $\{0,\dots,n-1\}$.  We
define edge colourings $\lambda_1$ on the complete graph on $\{0,\dots,n-1\}$
and $\lambda_2$ on $\{-1,0,1,\dots,n-1\}$ into the colours\footnote{Note that
  the set of proper colours in this section is $\{c_0,\dots,c_{n-1}\}$,
  instead of $\{c_1,\dots,c_{n}\}$ used in the other sections. It makes
  calculations easier and should not cause confusion.}
 $\{1',c_0,\dots,c_{n-1}\}$ as follows:
\[
\lambda_1(i,j)=\begin{cases}
1'&\text{ (invisible) if }i=j\\
c_{i\cdot j}& \text{ otherwise}
\end{cases}
\]
and 
\[
\lambda_2(i,j)=\begin{cases}
1'&\text{ (invisible) if }i=j\\
\lambda_1(i,j) &\text{ if $i,j\neq -1$}\\
c_j&\text{ if $i=-1\neq j$}\\
c_i&\text{ if $j=-1\neq i$.}
\end{cases}
\]
Commutativity of $(Q,\cdot)$ ensures that $\lambda_i(j,k)=\lambda_i(k,j)$.
\begin{lemma}\label{lem:quasigrps}
If $(Q,\cdot)$ is a commutative idempotent quasigroup on $\{0,1,\dots,n-1\}$, then
$\lambda_1^{-1}$ and $\lambda_2^{-1}$ are feeble representations of $\mathfrak{E}_{n+1}^{\{3\}}$.  Moreover, the following properties are related by \textup{(1)}$\Leftrightarrow$\textup{(3)} and \textup{(1)}$\Rightarrow$\textup{(2)}\textup:
\begin{enumerate}
\item $\lambda_1$ is a qualitative representation of $\mathfrak{E}_{n+1}^{\{3\}}$,
\item $\lambda_2$ is a qualitative representation of $\mathfrak{E}_{n+1}^{\{3\}}$,
\item $(Q,\cdot)$ satisfies the following \emph{$3$-cycle condition}\textup: for each $x,y,z$ with $|\{x,y,z\}|=3$ there exists $u,v,w$ with $u\cdot v=x$, $v\cdot w=y$ and $w\cdot u=z$.
\end{enumerate}
\end{lemma}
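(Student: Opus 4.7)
The plan is to decode the three claims via Lemma~\ref{lem:combinatorial} with $S=\{3\}$ and $F=\{1,2\}$: feeble representability amounts to ruling out all monochromatic and dichromatic triangles in the colouring, and qualitative representability additionally demands that every trichromatic triple of colours be realised by an actual triangle.

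For feeble representability of $\lambda_1$, I would argue that a triangle on distinct vertices $i, j, k$ with edge colours $c_{i\cdot j}, c_{j\cdot k}, c_{i\cdot k}$ cannot have any two of these equal: for example, $i\cdot j = j\cdot k$ rewrites via commutativity as $j\cdot i = j\cdot k$ and forces $i = k$ by cancellation in $(Q, \cdot)$, and the other two collisions are symmetric. For $\lambda_2$, the new triangles are those of the form $\{-1, i, j\}$ with distinct $i, j \neq -1$, carrying edge colours $c_i, c_j, c_{i\cdot j}$; here idempotence is the right tool, since $c_i = c_{i\cdot j}$ gives $i = i\cdot j = i\cdot i$ and hence $j = i$, with the remaining pair-equalities handled symmetrically.

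The equivalence (1)$\Leftrightarrow$(3) is then a direct translation. A trichromatic triangle on $u, v, w$ in $\lambda_1$ realising $(c_x, c_y, c_z)$ is the same data as a solution to $u\cdot v = x$, $v\cdot w = y$, $u\cdot w = z$, and commutativity rewrites $u\cdot w = w\cdot u$ to match the 3-cycle condition exactly. A subtlety worth flagging: the 3-cycle condition does not explicitly require $u, v, w$ distinct, but idempotence forces it, since if $u = v$ then $u\cdot v = u$ gives $x = u$ while $v\cdot w = u\cdot w = w\cdot u = z$ gives $y = z$, contradicting $|\{x, y, z\}| = 3$. For (1)$\Rightarrow$(2), $\lambda_2$ agrees with $\lambda_1$ on $\{0,\dots,n-1\}$, so any trichromatic triangle realised by $\lambda_1$ persists in $\lambda_2$; combined with the feebleness of $\lambda_2$ from the previous step this yields qualitative representability of $\lambda_2$.

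The main technical work lies in the case analysis for $\lambda_2$ at the added vertex $-1$, and once one identifies idempotence as the right tool this verification becomes routine. The only other delicate point is the automatic distinctness in the 3-cycle condition, which again comes from idempotence; no quasigroup property beyond cancellation, commutativity, and idempotence is used.
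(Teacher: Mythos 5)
Your proposal is correct and follows essentially the same route as the paper's proof: cancellativity rules out repeated colours in triangles of $\lambda_1$, idempotence handles the new triangles through the added vertex $-1$, and the equivalence of (1) with the $3$-cycle condition is a direct translation (with (1)$\Rightarrow$(2) because $\lambda_2$ extends $\lambda_1$). Your extra observation that idempotence forces the witnesses $u,v,w$ of the $3$-cycle condition to be distinct is a small point the paper leaves implicit, but it does not change the argument; the only omission is the (equally routine) check that both colourings are surjective.
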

\begin{proof}
It is clear that $\lambda_1$ and $\lambda_2$ are surjective, so to establish that both determine feeble representations we must ensure that there are no monochromatic or dichromatic triangles.  
For $\lambda_1$ this is just the cancellativity property of quasigroups: for $i,j,k\in\{0,\dots,n-1\}$ with $j\neq k$, we have $\lambda_1(i,j)\neq \lambda_1(i,k)$ because $i\cdot j=i\cdot k\Rightarrow j=k$, showing that no triangle contains two edges of the same colour.  
Thus~$\lambda_1^{-1}$ is a feeble representation.  Now observe that under $\lambda_1$, each point $i$ is incident to edges of all colours $c_0,\dots,c_{n-1}$ except colour $c_i$ (this is where idempotency of $(Q,\cdot)$ is used).  
The colouring $\lambda_2$ extends $\lambda_1$ to the extra vertex $-1$ by adding the missing colour for each vertex: an edge of colour $c_i$ between $-1$ and $i$, for each $i\in \{0,\dots,n-1\}$.  
It follows immediately that no non-trichromatic triangles are added under this extension: edges leaving $-1$ are coloured according to the name of the  vertex at the other end, so cannot be the same colour; and such an edge does not coincide in colour with any other edge incident to that vertex.  
Thus  $\lambda_2^{-1}$ also is  a feeble representations of~$\mathfrak{E}_{n+1}^{\{3\}}$.  

For the three conditions: (1) implies (2) because $\lambda_2$ extends $\lambda_1$, while according to the definition of $\lambda_1$, the $3$-cycle condition in (3) is precisely the condition that~$\lambda_1$ provides instances of each trichromatic triangle, so (1) is equivalent to (3).
\end{proof}
It is well known that commutative idempotent quasigroups exist for all and only odd orders, though we omit details because it emerges naturally from the proof of our main classification of qualitative representability $\mathfrak{E}_{n+1}^{\{3\}}$; Theorem \ref{thm:trichromatic} below.
We next recall a standard example of a commutative idempotent quasigroup, for every odd order, and verify that it satisfies the $3$-cycle condition.  

On universe $Q_n=\{0,\dots,n-1\}$ for an odd $n$, define a multiplication $\cdot$ by
$$
i\cdot j = \frac{i+j}{2}
$$
where $\frac{x}{2}$ stands for the unique integer $u\in \mathbb{Z}_n$
such that $2u = x \pmod n$, which exists because $n$ is odd. 

\begin{lemma}\label{lem:Qn}
The quasigroup $(Q_n,\cdot)$ satisfies the $3$-cycle condition.
\end{lemma}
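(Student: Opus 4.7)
The plan is to translate the $3$-cycle condition into a system of linear congruences modulo $n$ and solve it directly. Writing out $\cdot$, the three equations $u\cdot v = x$, $v\cdot w = y$, $w\cdot u = z$ are equivalent to $u+v \equiv 2x$, $v+w \equiv 2y$, $w+u \equiv 2z \pmod{n}$, since by definition $i \cdot j$ is the unique element whose double is $i+j$.

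Summing the three congruences gives $2(u+v+w) \equiv 2(x+y+z) \pmod{n}$, and since $n$ is odd, $2$ is a unit in $\mathbb{Z}_n$, so $u+v+w \equiv x+y+z$. Subtracting each of the three equations from this total would yield the explicit candidate solution
\[
u = x - y + z, \quad v = x + y - z, \quad w = -x + y + z \quad \text{in } \mathbb{Z}_n,
\]
which I would verify by direct back-substitution.

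The one nonroutine point is to check that $u$, $v$, $w$ are pairwise distinct, so that they genuinely witness a trichromatic triangle in the coloured graph rather than a degenerate configuration. I expect this to fall out in two ways: directly, $u - v \equiv 2(z - y) \pmod{n}$ is nonzero because $y \neq z$ and $2$ is a unit, and symmetrically for the other two pairs; conceptually, by commutativity and idempotency any collision $u = v$ would force $y = v \cdot w = u \cdot w = z$, contradicting $|\{x,y,z\}|=3$.

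I do not foresee a significant obstacle: the odd parity of $n$ is precisely what is needed to invert $2$ throughout the argument, and the distinctness hypothesis on $x$, $y$, $z$ carries over cleanly to $u$, $v$, $w$ via the same invertibility.
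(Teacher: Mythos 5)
Your proof is correct and follows essentially the same route as the paper's: both reduce the $3$-cycle condition to a linear system over $\mathbb{Z}_n$ and solve it explicitly, with the oddness of $n$ entering precisely through the invertibility of $2$. Your additional check that $u$, $v$, $w$ are pairwise distinct is a welcome step that the paper leaves implicit (and is indeed needed for the triple to witness an actual trichromatic triangle).
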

\begin{proof}
Let $|\{i,j,k\}| = 3$ (equivalently $|\{c_{i},c_{j},c_{k}\}| = 3$).
We claim that it is possible to
choose  $a,y_{a},x_{a} \in Q_n$ , so that we have $i=a \cdot x_{a},$  $j=a
\cdot y_{a},$ $k=x_{a}\cdot y_{a}$, verifying the $3$-cycle condition.  By the unique solution property of
$(Q_n, \cdot)$, for each $a\in Q_{n}$, there 
exists $x_{a}, y_{a}\in Q_n$, where 
\begin{align*}
a \cdot x_{a} &= i\\
a \cdot y_{a} &= j.
\end{align*}
Then a simple calculation shows that 
$$
x_{a}\cdot y_{a}=i+j-a \pmod  n
$$
holds. It follows that choosing $a$ to be $k-i-j\pmod n$ provides the desired solution.  
\end{proof}
%
%

\begin{theorem}\label{thm:trichromatic}
The algebra $\mathfrak{E}_{n+1}^{\{3\}}$ is qualitatively representable if and only if
$n\geq 3$ is odd. Up to isomorphism, every representation of $\mathfrak{E}_{n+1}^{\{3\}}$ arises
from a representation of the form $\lambda_1^{-1}$ or $\lambda_2^{-1}$ of Lemma~\ref{lem:quasigrps}.
\end{theorem}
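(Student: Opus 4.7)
The plan is to prove both directions together by analysing an arbitrary hypothetical qualitative representation; the ``if'' direction will then follow because for odd $n \geq 3$ the quasigroup $(Q_n, \cdot)$ of Lemma~\ref{lem:Qn} combined with Lemma~\ref{lem:quasigrps} already supplies a qualitative representation via $\lambda_1^{-1}$. First I would invoke Lemma~\ref{lem:combinatorial}(2) to translate: a qualitative representation is a surjective edge $n$-colouring $\lambda$ of some $K_m$ with no monochromatic or dichromatic triangles, in which every trichromatic colour-triple is realised by at least one triangle. Absence of dichromatic triangles forces $\lambda$ to be a proper edge colouring, so $m \leq n+1$; realisation of all $\binom{n}{3}$ trichromatic triples, together with the fact that each triangle contributes at most one triple, forces $\binom{m}{3} \geq \binom{n}{3}$, i.e.\ $m \geq n$. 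Hence $m \in \{n, n+1\}$, and the argument splits into two cases.

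In the case $m = n+1$, each vertex sees all $n$ colours, so every colour class is a perfect matching of $K_{n+1}$, which requires $n+1$ to be even and hence $n$ to be odd. To recover the quasigroup I would fix any vertex $v_0$, observe that $g(v) \deq \lambda(v_0, v)$ is a bijection $V \setminus \{v_0\} \to C$ (since the colouring at $v_0$ is proper and uses all colours), relabel so that $V \setminus \{v_0\} = C$ with $g = \mathrm{id}$, and define $c_i \cdot c_j \deq \lambda(c_i, c_j)$ for $i \neq j$ together with $c_i \cdot c_i \deq c_i$. Symmetry of $\lambda$ gives commutativity, the definition gives idempotency, and properness of $\lambda$ gives the quasigroup property; a direct comparison with the definition then shows that $\lambda$ coincides with $\lambda_2$ of this quasigroup, with $v_0$ playing the role of the vertex $-1$.

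The case $m = n$ is the main obstacle and carries the key parity argument. Here $\binom{m}{3} = \binom{n}{3}$, so each trichromatic triple appears in \emph{exactly} one triangle. Fixing a colour $c$ and double-counting pairs $(T,e)$ with $T$ a triangle and $e$ the unique edge of colour $c$ in $T$ yields $e_c(n-2) = \binom{n-1}{2}$, where $e_c$ is the number of edges of colour $c$; hence $e_c = (n-1)/2$. This simultaneously forces $n$ to be odd and, via $n - 2e_c = 1$, shows that each colour is missing from exactly one vertex, so the ``missing colour'' map $g \colon V \to C$ is a bijection. Relabelling and defining the quasigroup operation exactly as in the other case yields a commutative idempotent quasigroup whose $\lambda_1$ coincides with $\lambda$, completing both the ``only if'' direction and the uniqueness clause.
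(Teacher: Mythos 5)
Your proposal is correct and follows essentially the same route as the paper: the bounds $n\leq m\leq n+1$ (properness of the colouring plus counting the $\binom{n}{3}$ required trichromatic triples), the perfect-matching parity argument when $m=n+1$, the edge-count parity argument when $m=n$, and the recovery of a commutative idempotent quasigroup identifying the colouring with $\lambda_2$ or $\lambda_1$ respectively. Your double-counting of (triangle, edge) pairs in the $m=n$ case is simply a more explicit justification of the paper's claim that each colour class has exactly $(n-1)/2$ edges, and your direct bijection between vertices and missing colours replaces the paper's contradiction argument and its unique-extension step, but the substance is the same.
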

\begin{proof}
Assume $\mathfrak{E}_{n+1}^{\{3\}}$ is qualitatively representable for some $n$. Then,
there exists a finite representation, over a complete graph $K_m$.
Equivalently, there is a colouring map
$\lambda\colon K_m\times K_m\to C$, with $|C| = n$. 
Consider an arbitrarily chosen vertex $v\in K_m$. Since dichromatic and
monochromatic triangles are forbidden, all adjacent edges must be of different
colours, so $m-1\leq n$. As there are $n\choose 3$ different trichromatic
triangles to realise, we must have $m\geq n$. So, $n\leq m\leq n+1$.

If $m = n+1$, then every vertex has adjacent edges of all colours. 
Since edges of the same colour cannot be adjacent, the edges of any given colour
form a disjoint union of copies of $K_2$ and there are no isolated vertices. 
So the number of vertices is even, hence $n$ is odd. 

Next, assume $m = n$. Since there are ${n \choose 3}$ trichromatic triangles
to realise, each necessary triangle appears exactly once in the representation.
By symmetry of the colouring condition, each edge of a given colour appears the
same number of times, and as there are $\frac{n(n-1)}{2}$ edges and $n$ colours,
each edge appears $\frac{n-1}{2}$ times, so $n$ is odd.
Thus qualitative representations can exist only for odd orders, and Lemma~\ref{lem:Qn} and Lemma~\ref{lem:quasigrps} show that all odd orders of size at least $3$ are possible.

For the second part, first consider a representation
on $n$ vertices. Each vertex has adjacent edges of all but one colour,
call it the missing colour. Suppose there are vertices $v$ and $w$ such that
they miss the same colour, say $m$, so that no edge coloured $m$ is adjacent to
either $v$ or $w$. As each edge appears 
$\frac{n-1}{2}$ times, $K_n\setminus\{v,w\}$ contains $\frac{n-1}{2}$ edges
coloured $m$. These edges are disjoint, so $K_n\setminus\{v,w\}$ must contain
$n-1$ vertices, in contradiction to $|K_n\setminus\{v,w\}| = n-2$.
It follows that each vertex has a different missing colour. It
follows further that adding one vertex, say $v_{-1}$ to $K_n$
and letting $\lambda(v_{-1},v_i)$ be the missing colour of $v_i$,
we obtain a representation on $n+1$ vertices. So, every representation on
$n$ vertices is uniquely extendable to a representation on $n+1$ vertices.
Equivalently, every representation on $n$ vertices can be obtained from a
representation on $n+1$ vertices by removing one vertex.

Now, consider a representation on $n+1$ vertices. Numbering the vertices
$v_{-1}$, $v_0,\dots,v_{n-1}$ in such a way that $\lambda(v_{-1},v_i) = c_i$
for any $i\neq -1$, we define multiplication on the set of
indices $\{0,\dots,n-1\}$ putting $i\cdot i$ and $i\cdot j = k$ if
$i\neq j$ and $\lambda(v_i,v_j) = c_k$. It is then routine to verify that
$\{0,\dots,n-1\}$ is a commutative idempotent quasigroup, and that the colouring coincides with $\lambda_2$ (restricting to coincide with $\lambda_1$ amongst the vertices $v_0,\dots,v_{n-1}$).
\end{proof}

The two qualitative representations of $\mathfrak{E}_{n}^{\{3\}}$ can be
visualised on the complex plane as follows. Let $v_{-1}$ be the origin, and
$v_0,\dots,v_{n-1}$ be the $n$-th roots of unity. Put
$\lambda(v_{-1},v_{0}) = c_0 = \lambda(v_j,\overline{v_j})$,
for $j\in \{1,\dots, \lfloor \frac{n}{2} \rfloor\}$, where
$\overline{v_j}$ is the complex conjugate of $v_j$. Rotate and repeat with a
different colour, as in Figure~\ref{complex}. To get a representation
on $n$ points remove the origin and its outgoing edges.

\begin{figure}
\begin{tikzpicture}
\node[bdot,label=-45:$v_{-1}$] (orig) at (0,0) {};    
\node[bdot,label=-45:$v_0$] (v0) at (0:1.5) {};
\node[bdot,label=360/7:$v_1$] (v1) at (360/7:1.5) {};
\node[bdot,label=2*360/7:$v_2$] (v2) at (2*360/7:1.5) {};
\node[bdot,label=3*360/7:$v_3$] (v3) at (3*360/7:1.5) {};
\node[bdot,label=4*360/7:${v_4=\overline{v_3}}$] (v4) at (4*360/7:1.5) {};
\node[bdot,label=5*360/7:${v_5=\overline{v_2}}$] (v5) at (5*360/7:1.5) {};
\node[bdot,label=6*360/7:${v_6=\overline{v_1}}$] (v6) at (6*360/7:1.5) {};  
\draw[thick] (orig)--(v0);
\draw[thick] (v1)--(v6);
\draw[thick] (v2)--(v5);
\draw[thick] (v3)--(v4);
\draw[dashed] (orig)--(v1);
\draw[dashed] (v2)--(v0);
\draw[dashed] (v3)--(v6);
\draw[dashed] (v4)--(v5);
\draw[->] (-2,0)--(2,0);
\draw[->] (0,-2)--(0,2);
\end{tikzpicture}
\caption{First stages of drawing a qualitative representation of
  $\mathfrak{E}_{7}^{\{3\}}$ on the complex plane. }\label{complex}
\end{figure}
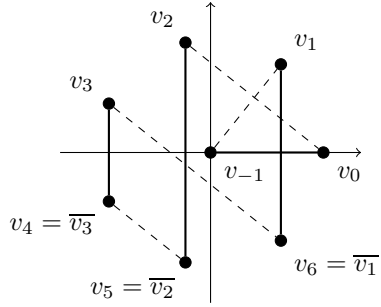

For feeble representations, we only need to consider $\mathfrak{E}_{n+1}^{\{3\}}$
for an even $n$. Taking a qualitative representation for
$\mathfrak{E}_{n}^{\{3\}}$, and re-colouring a single arbitrarily chosen edge with a new
colour gives a feeble representation of $\mathfrak{E}_{n+1}^{\{3\}}$. Hence, 
$\mathfrak{E}_{n+1}^{\{3\}}$ is feebly representable for all $n\geq 3$.
Now using Lemma~\ref{lem:feeble-expansion} we immediately obtain the following.

\begin{obs}\label{obs:3-feeble}
Let $3\in S$. Then $\mathfrak{E}_{n+1}^S$ is feebly representable for all $n\geq 3$.
\end{obs}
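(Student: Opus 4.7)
The plan is to invoke directly two facts already in hand: the feeble representability of $\mathfrak{E}_{n+1}^{\{3\}}$ for every $n \geq 3$, established in the paragraph immediately preceding the statement, and the monotonicity recorded in Lemma \ref{lem:feeble-expansion}. For the former, odd $n$ is covered by the qualitative representability portion of Theorem \ref{thm:trichromatic}, while even $n$ is handled by taking a qualitative representation of $\mathfrak{E}_{n}^{\{3\}}$ (which exists because $n-1$ is odd and at least $3$ when $n \geq 4$) and recolouring a single arbitrarily chosen edge with a fresh $n$-th colour.

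Given these two ingredients, the argument is essentially a one-liner. Suppose $3 \in S$, so that $\{3\} \subseteq S$. Fix a colouring $\lambda$ whose inverse is a feeble representation of $\mathfrak{E}_{n+1}^{\{3\}}$; by Lemma \ref{lem:feeble-expansion}, the same $\lambda$ is a feeble representation of $\mathfrak{E}_{n+1}^S$. Unwinding through condition (a$'$) of Lemma \ref{lem:combinatorial}(3), this amounts to the routine observation that the forbidden set $F = \{1,2,3\} \setminus S$ is contained in $\{1,2\} = \{1,2,3\} \setminus \{3\}$, so the absence of monochromatic and dichromatic triangles in the base colouring already implies the weaker exclusion condition demanded by $F$.

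There is no genuine obstacle beyond assembling these pieces: Observation \ref{obs:3-feeble} is a direct corollary of the preceding paragraph and Lemma \ref{lem:feeble-expansion}, with no new combinatorial content to verify. The only point meriting a moment of care is that the base case $n = 3$ is covered (by the strong, hence feeble, representation of $\mathfrak{E}_4^{\{3\}}$ on four points recalled at the beginning of Section \ref{sec:quasigr}), and that the even-$n$ recolouring trick is used for $n \geq 4$, so the full range $n \geq 3$ is handled.
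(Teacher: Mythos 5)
Your proof is correct and follows essentially the same route as the paper: feeble representability of $\mathfrak{E}_{n+1}^{\{3\}}$ for all $n\geq 3$ (odd $n$ from the qualitative representations of Theorem~\ref{thm:trichromatic}, even $n$ from the single-edge recolouring of a qualitative representation of $\mathfrak{E}_{n}^{\{3\}}$), followed by the monotonicity of Lemma~\ref{lem:feeble-expansion}. The only cosmetic difference is that you cover $n=3$ via the strong representation on four points rather than as an instance of the odd case; both are valid.
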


\section{Algebras $\mathfrak{E}_{n+1}^{\{2\}}$ } \label{sec:2}

The algebras $\mathfrak{E}_{n+1}^{\{2\}}$ are associative.
The pentagon algebra is the unique strong representation of
$\mathfrak{E}_{3}^{\{2\}}$, and it is easy to show that
strong representations do not exist for $n>2$.
At the other extreme, feeble representations always exist.

\begin{obs}\label{obs:2-feeble}
Let $2\in S$. Then $\mathfrak{E}_{n+1}^S$ is feebly representable for all $n\geq 2$.
\end{obs}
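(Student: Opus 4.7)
The plan is to use Lemma~\ref{lem:feeble-expansion} to reduce to the minimal case $S=\{2\}$: since $\{2\}\subseteq S$ whenever $2\in S$, any colouring that feebly represents $\mathfrak{E}_{n+1}^{\{2\}}$ also feebly represents $\mathfrak{E}_{n+1}^{S}$. By Lemma~\ref{lem:combinatorial}(3), applied with $F=\{1,3\}$, what is required is a surjective edge $n$-colouring of some complete graph in which every triangle is \emph{dichromatic}, i.e., neither monochromatic nor trichromatic.

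I would construct such a colouring by induction on $n\geq 2$. For the base case $n=2$, I would take the pentagon colouring of $K_5$: colour the five pentagon edges with $c_1$ and the five diagonals with $c_2$. Each of the ten triangles contains either two pentagon edges and one diagonal or one pentagon edge and two diagonals, so is dichromatic, and both colours are used.

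For the inductive step, suppose we have a complete graph $K_N$ with an $n$-colouring in which every triangle is dichromatic. Form a new complete graph on $2N$ vertices by taking two disjoint copies of the coloured $K_N$, retaining the internal colouring of each copy, and colouring every cross-copy edge with a fresh colour $c_{n+1}$. The verification is a case split on how a triangle's vertices distribute across the two copies: a triangle entirely inside one copy is dichromatic by the inductive hypothesis; a triangle with two vertices in one copy and one in the other has two cross-copy edges (both coloured $c_{n+1}$) together with one within-copy edge of some earlier colour, hence is dichromatic; and no triangle can span three copies since only two exist.

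I do not anticipate any real obstacle: the construction is explicit and the case analysis is routine. The one item to check is surjectivity of the colouring, but this is immediate — each previous colour persists inside each copy, and $c_{n+1}$ appears on every cross-copy edge. This yields a feeble representation of $\mathfrak{E}_{n+1}^{\{2\}}$ for every $n\geq 2$, and hence, via Lemma~\ref{lem:feeble-expansion}, of $\mathfrak{E}_{n+1}^{S}$ whenever $2\in S$.
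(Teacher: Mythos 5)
Your proposal is correct, but it reaches the required colouring by a genuinely different construction than the paper. You make the same two reductions the paper does (Lemma~\ref{lem:feeble-expansion} to pass from $\{2\}$ to any $S\ni 2$, and Lemma~\ref{lem:combinatorial}(3) to translate feeble representability into ``surjective $n$-colouring with every triangle dichromatic''), but then you build the colouring inductively: starting from the pentagon for $n=2$ and doubling, joining two copies of a good $K_N$ by a fresh colour on all cross edges. The case analysis is sound --- a cross triangle has two edges of the new colour and one old edge, hence is dichromatic --- and surjectivity is clear, so the argument goes through. The paper instead gives a one-shot colouring of $K_{n+1}$: with vertices $v_0,\dots,v_n$, set $\lambda(v_i,v_j)=c_j$ for $i<j$, so a triangle $(v_i,v_j,v_k)$ with $i<j<k$ receives colours $c_j,c_k,c_k$. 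That version uses only $n+1$ points rather than your $5\cdot 2^{n-2}$, needs no induction, and is reused later in Section~\ref{sec:2} as the ``naturally ordered subgraph'' in the proof that qualitative representations of $\mathfrak{E}_{n+1}^{\{2\}}$ do not exist for $n>2$; your doubling construction buys nothing extra here beyond being a pleasant alternative, since size is irrelevant for feeble representability.
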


\begin{proof}
By Lemma~\ref{lem:feeble-expansion}, it suffices to show that $\mathfrak{E}_{n+1}^{\{2\}}$ is feebly representable for
all $n\geq 2$.  
Let $V = \{v_0,\dots, v_n\}$ be a set of vertices, and $C = \{c_1,\dots,c_n\}$ a
set of proper colours. Define a colouring
$\lambda\colon V^2\to C$ by $\lambda(v_i,v_j) = c_j$ if $i<j$. Then any triangle
that occurs is dichromatic, and all colours are used, so we have a feeble representation. 
\end{proof}  

Qualitative representations do not exist for $n>2$, but proving this
is considerably more difficult than for strong representations. We will work our way
towards a contradiction. Assume a qualitative representation over a base $B$
exists, and let $v\in B$. 
The \emph{chromatic degree} $\mathrm{deg}_\chi(v)$ of $v$ is the number of
colours of its adjacent edges. If a vertex $v$ has $\mathrm{deg}_\chi(v) = n$,
we call $v$ \emph{chromatically saturated}. To distinguish clearly between triangles
given as sets of vertices, and triangles given as sets of edges, we
will use parentheses $(v_i,v_j,v_k)$
for vertices and square brackets $[\ell,p,q]$ for edges/colours.

\begin{lemma}\label{lem:extend}
If $\mathfrak{E}_{n+1}^{\{2\}}$ has a qualitative representation over a base $B$, and
$v_0\in B$, then it also has a 
qualitative representation over a base $B'$, extending $B$, in which $v_0$ is
chromatically saturated.  
\end{lemma}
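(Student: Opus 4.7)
The plan is to extend $B$ one vertex at a time, adding a new vertex for each colour currently missing at $v_0$, until $v_0$ becomes saturated. Concretely, if $v_0$ is not already chromatically saturated, I would pick any colour $c$ missing at $v_0$, form $B^* := B \cup \{w\}$ with $w$ a fresh vertex, and extend $\lambda$ by setting $\lambda(v_0, w) := c$ and, for each $u \in B \setminus \{v_0\}$, $\lambda(u, w) := \lambda(v_0, u)$. The idea is that $w$ will mimic $v_0$'s view of the rest of $B$, while being itself joined to $v_0$ by the previously unused colour.

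The main step will be verifying, via Lemma~\ref{lem:combinatorial}(2), that $\lambda$ on $B^*$ remains a qualitative representation of $\mathfrak{E}_{n+1}^{\{2\}}$. Preservation of dichromatic triangles is automatic since $B \subseteq B^*$, so everything reduces to checking that no monochromatic or trichromatic triangle is introduced, which in turn only requires inspecting triangles containing the new vertex $w$. I would split this into two cases. First, for $u \in B \setminus \{v_0\}$, the triangle $(v_0, u, w)$ carries colours $\lambda(v_0, u), \lambda(v_0, u), c$; because $c$ is missing at $v_0$ we have $\lambda(v_0, u) \neq c$, so the triangle is dichromatic. Second, for distinct $u_1, u_2 \in B \setminus \{v_0\}$, the triangle $(u_1, u_2, w)$ carries the colours $\lambda(v_0, u_1), \lambda(v_0, u_2), \lambda(u_1, u_2)$, which are precisely the colours of the already present triangle $(v_0, u_1, u_2)$ in $B$, hence dichromatic by hypothesis.

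With both cases dichromatic, $B^*$ satisfies the hypotheses of Lemma~\ref{lem:combinatorial}(2) and is again a qualitative representation. Iterating the construction strictly decreases the number of colours missing at $v_0$ at each step (the chosen $c$ becomes present at $v_0$, and no previously present colour can be lost, since the existing edges at $v_0$ are untouched), so the process terminates after at most $n$ steps with a base $B'$ on which $v_0$ is chromatically saturated. The real content is the two-subcase triangle check above; the iteration and the set-up are routine bookkeeping in the style of standard extension arguments for representations.
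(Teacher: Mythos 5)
Your proposal is correct and follows essentially the same route as the paper: you add a fresh vertex that copies $v_0$'s edge colours to the rest of $B$ and is joined to $v_0$ by a missing colour, then verify consistency by the same two-case triangle check and iterate until $v_0$ is saturated. No substantive differences.
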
  

\begin{proof}
Assume $\mathfrak{E}_{n+1}^{\{2\}}$ has a qualitative representation over $B$. Pick an arbitrary $v_0\in B$.
We can assume $B = \{v_0,\dots,v_m\}$.   
If $\mathrm{deg}_\chi(v) = n$, there is nothing to prove, so assume
$\mathrm{deg}_\chi(v) = k<n$. Let $d$ be a colour not adjacent to $v_0$. Add a vertex $v'$
and extend the labelling by
$$
\lambda(v',v_i) =
\begin{cases}
d & \text{ if }  i = 0\\
\lambda(v_0, v_i) & \text{ otherwise}
\end{cases}
$$
We will show that this labelling is consistent. 
As the only possible inconsistency involves $v'$, consider
$v_i$, $v_k$, $v'$ with $|\{v_i,v_k,v'\}| = 3$. If
$i\neq 0\neq k$, then we have $\lambda(v',v_i) = \lambda(v_0,v_i)$ and
$\lambda(v',v_k) = \lambda(v_0,v_k)$. Since the triangle $(v_0,v_i,v_k)$ is
consistent, so is $(v',v_i,v_k)$. Next, assume $i\neq 0 = k$, so we
consider the triangle $(v',v_0,v_i)$. Then, we
have $\lambda(v', v_0) = d$, and $\lambda(v', v_i) = \lambda(v_0, v_i)$
by construction. By assumption, $\lambda(v_0, v_i)\neq d$, so the
triangle $(v',v_0,v_i)$ is consistent. 

Repeating the extension procedure sufficiently many times, produces the desired
qualitative representation.
\end{proof}

By Lemma~\ref{lem:extend}, we can assume that $\mathfrak{E}_{n+1}^{\{2\}}$ has a
qualitative representation over 
some $B$ such that there is a chromatically saturated vertex $v\in B$. Every vertex $u\in B$ defines a partition of
$B\setminus\{u\}$ 
by adjacent colours, i.e., $z\sim_u w$ if and only if
$\lambda(u,z) = \lambda(u,w)$. Let $v_0$ be chromatically saturated.
Pick a single representative of each equivalence class of $\sim_{v_0}$
and let the representatives be $v_1,\dots, v_n$, for the colours
$1,\dots, n$, so that $\lambda(v_0,v_i) = i$. Let
$G$ be the induced labelled subgraph of $B$ on the vertices
$\{v_0, \dots,v_n\}$. Define a binary relation $\sqsubset$ on $\{v_0, \dots,v_n\}$
by putting $v_i\sqsubset v_j$ if $\lambda(v_i,v_j) = j$. Note that
$v_0 \sqsubset v_i$ for every $i > 0$. 

\begin{lemma}\label{lem:order}
The relation $\sqsubset$ is a strict linear order on $\{v_0,\dots,v_n\}$.
\end{lemma}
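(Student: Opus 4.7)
The plan is to verify the three defining properties of a strict linear order in turn: irreflexivity, totality (any two distinct elements are comparable), and transitivity. Throughout I would exploit the single combinatorial fact behind a qualitative representation of $\mathfrak{E}_{n+1}^{\{2\}}$, namely that every triangle on three distinct vertices is dichromatic (no monochromatic and no trichromatic triangles).

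Irreflexivity I would dispose of immediately, since $\lambda(v_i, v_i) = 1'$ is not a proper colour and in particular is not equal to any index $i \in \{1,\dots,n\}$. For totality, I would split on whether $v_0$ is one of the two vertices in question. If $i = 0$ and $j \geq 1$, then $\lambda(v_0, v_j) = j$ by choice of representatives, so $v_0 \sqsubset v_j$. If instead $i, j \geq 1$ are distinct, I would look at the triangle $(v_0, v_i, v_j)$: the two edges at $v_0$ are coloured $i$ and $j$, and dichromaticity forces $\lambda(v_i, v_j) \in \{i, j\}$, yielding one of the two comparabilities. At this point I would also note that $\sqsubset$ is automatically antisymmetric on distinct vertices: if both $v_i \sqsubset v_j$ and $v_j \sqsubset v_i$ held, then symmetry of $\lambda$ would give $\lambda(v_i,v_j) = j$ and $\lambda(v_j,v_i) = i$ simultaneously, forcing $i = j$.

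The transitivity step is the most substantive. Assuming $v_i \sqsubset v_j$ and $v_j \sqsubset v_k$, I would first verify that $v_i, v_j, v_k$ are pairwise distinct; the only nontrivial coincidence $v_i = v_k$ is ruled out by the antisymmetry just noted, together with $v_i \sqsubset v_j$. By the totality step applied to $v_i$ and $v_k$, we have $\lambda(v_i, v_k) \in \{i, k\}$ (or directly $\lambda(v_0,v_k) = k$ in the degenerate case $i=0$). I would then examine the triangle $(v_i, v_j, v_k)$, whose edges are coloured $j$, $k$, and $\lambda(v_i, v_k)$. The possibility $\lambda(v_i, v_k) = i$ would produce three distinct colours $i, j, k$, violating dichromaticity; hence $\lambda(v_i, v_k) = k$, that is, $v_i \sqsubset v_k$.

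I do not expect any serious obstacle here; once the right triangles are identified, the argument is essentially a single invocation of the dichromatic-triangle condition at each step. The only minor subtlety worth attending to is the distinguished role of $v_0$: because there is no proper colour indexed by $0$, $v_0$ is forced to be the unique $\sqsubset$-minimum, and the chromatic saturation of $v_0$ is precisely what makes the triangle $(v_0, v_i, v_j)$ determine $\lambda(v_i, v_j)$ in the totality argument. Keeping this role explicit in the case analysis is what prevents the otherwise uniform argument from collapsing at the boundary.
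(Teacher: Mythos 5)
Your proof is correct and follows essentially the same route as the paper's: irreflexivity is immediate, linearity comes from dichromaticity of the triangle $(v_0,v_i,v_j)$, and transitivity intersects the two dichromaticity constraints coming from the triangles $(v_0,v_i,v_k)$ and $(v_i,v_j,v_k)$, with $i=0$ handled separately. The explicit antisymmetry and distinctness bookkeeping you add is harmless detail the paper leaves implicit.
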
  

\begin{proof}
Antireflexivity is immediate from the definition.   
To show transitivity, assume $v_p\sqsubset v_q$ and $v_q\sqsubset v_r$. By
definition of $\sqsubset$, only $p$ can be $0$, and in this case the claim
follows immediately, as $\lambda(v_0,v_r) = r$ by definition.

Assume $p\neq 0$. Then $\lambda(v_p,v_q) = q = \lambda(v_0,v_q)$ and
$\lambda(v_q,v_r) = r = \lambda(v_0,v_r)$. Now, consider $\lambda(p,r)$.
Since $\lambda(v_0,v_p) = p$, we must have $\lambda(v_p,v_r)\in \{q,r\}\cap\{p,r\}$,
and since $p\neq q$, we get $\lambda(v_p,v_r) = r$ as required. 

Linearity is also immediate. Take arbitrary $v_p$ and $v_q$. If $p=0$ or
$q = 0$ the claim holds trivially. Otherwise, we have
$\lambda(v_p,v_q)\in\{\lambda(v_0,v_p), \lambda(v_0,v_q)\} = \{p,q\}$. 
\end{proof}

Although $G$ is a feeble representation of $\mathfrak{E}_{n+1}^{\{2\}}$
(isomorphic to the one of Observation~\ref{obs:2-feeble}),
it is not a qualitative representation of $\mathfrak{E}_{n+1}^{\{2\}}$,
as no triangle $(v_i,v_i,v_j)$ with
$i\sqsubset j$ is realised in $G$. 
Hence, if $B$ is the base of a qualitative
representation  of
$\mathfrak{E}_{n+1}^{\{2\}}$, then $G \neq B$. 
Consider a vertex $u\notin G$.
As $|G| = n+1$, by the pigeonhole principle we must have
$\lambda(u,v_p) = \lambda(u,v_q)$ for some distinct vertices $v_p,v_q\in G$.  Assume that $\ell$ denotes the value $\lambda(u,v_p) = \lambda(u,v_q)$. 

Renumbering the colours, we can assume that $\sqsubset$ coincides with the
usual strict order on natural numbers. From now on we will work under this
assumption, using $<$ for the induced order on colours. We will refer to
$G$ as a \emph{naturally ordered subgraph} of $B$.  While there may be many subgraphs of this form, the proofs that follow do not require us to deviate from a single chosen one.  In contrast, will consider many different choices of $u\notin G$, and the values $p,q,\ell$ clearly depend on the choice of $u$.

We can assume
$p<q$, so  $\lambda(v_p,v_q) = q$, and thus, $\ell \neq q$. We can also assume
that $q$ is the largest vertex (amongst $v_0,\dots,v_n$) with a repeat, that is,
such that  $\lambda(u,v_s) = \lambda(u,v_q)$ holds for some $s\neq q$.
Thus, for every distinct $s,t\geq q$ we have $\lambda(u,v_s) \neq
\lambda(u,v_t)$. We will keep these assumptions fixed throughout the rest of the
section, and invite the reader to draw pictures while reading. 

\begin{lemma}\label{lem:about-ell}
Let $G$ be a naturally ordered subgraph of $B$, and let $u\notin G$.
The following hold\textup:
\begin{enumerate}
\item If $p\neq \ell$, then, for every $r \leq p$ we have $\lambda(u,v_r)=\ell$.
\item If $p = \ell$, then, for every $r < p$ we have $\lambda(u,v_r)=q$.
\item For every $r,r' < p$ we have $\lambda(u,v_r) = \lambda(u,v_{r'})$.
\item If $p<\ell<q$, we have $\lambda(u,v_\ell)=q$.
\item For every $r\neq \ell$ with $p<r<q$, we have $\lambda(u,v_r)=\ell$.
\item For every $r\neq \ell$ with $q<r$, we have $\lambda(u,v_r)=r$.
\end{enumerate}
\end{lemma}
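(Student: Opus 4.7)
The plan is to handle all six claims by a uniform triangle-chasing argument, exploiting the defining constraint of $\mathfrak{E}_{n+1}^{\{2\}}$: in any qualitative representation every realised triangle must be dichromatic, so neither monochromatic nor trichromatic. The input data I will use throughout are the identities $\lambda(v_i,v_j)=\max(i,j)$ coming from $G$ being naturally ordered, the hypothesised repeats $\lambda(u,v_p)=\lambda(u,v_q)=\ell$, the strict inequality $p<q$, and the previously established fact that $\ell\neq q$. For each target vertex $v_r$, the strategy is simply to intersect the constraints placed on $\lambda(u,v_r)$ by the two triangles $(u,v_r,v_p)$ and $(u,v_r,v_q)$ (with a third, $(u,v_p,v_\ell)$, occasionally invoked to rule out a colour).

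For parts (1) and (2) I split on whether $p=\ell$. When $p\neq\ell$ and $r<p$, the triangle $(u,v_r,v_p)$ carries the two distinct colours $\lambda(v_r,v_p)=p$ and $\lambda(u,v_p)=\ell$, so dichromaticity forces $\lambda(u,v_r)\in\{p,\ell\}$; analogously $(u,v_r,v_q)$ forces $\lambda(u,v_r)\in\{q,\ell\}$, and intersecting (using $p\neq q$) gives $\lambda(u,v_r)=\ell$. When $p=\ell$, the triangle $(u,v_r,v_p)$ already carries two edges of colour $p$, and avoiding monochromaticity forces $\lambda(u,v_r)\neq p$; the triangle $(u,v_r,v_q)$ forces $\lambda(u,v_r)\in\{p,q\}$, leaving $\lambda(u,v_r)=q$. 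Part (3) is then a free consequence, since (1) and (2) both deliver a value independent of the particular $r<p$.

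Parts (4) and (5) follow the same recipe. For (4), when $p<\ell<q$, the triangle $(u,v_p,v_\ell)$ has $\lambda(u,v_p)=\lambda(v_p,v_\ell)=\ell$, so avoiding monochromaticity gives $\lambda(u,v_\ell)\neq\ell$; combined with the dichromatic constraint from $(u,v_\ell,v_q)$, which forces $\lambda(u,v_\ell)\in\{q,\ell\}$, this yields $\lambda(u,v_\ell)=q$. For (5), with $p<r<q$ and $r\neq\ell$, the triangles $(u,v_p,v_r)$ and $(u,v_r,v_q)$ force $\lambda(u,v_r)\in\{r,\ell\}$ and $\{q,\ell\}$ respectively, intersecting (using $r\neq q$) to $\{\ell\}$.

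The only non-routine ingredient appears in (6), which is where the maximality hypothesis on $q$ enters. For $r>q$ with $r\neq\ell$, the triangle $(u,v_q,v_r)$ forces $\lambda(u,v_r)\in\{r,\ell\}$; if the second alternative held, then $\lambda(u,v_r)=\ell=\lambda(u,v_q)$ with $r>q$, contradicting the choice of $q$ as the largest index admitting a colour-repeat from $u$, so $\lambda(u,v_r)=r$. I do not anticipate any real obstacle: the whole argument is pure triangle-chasing, and the only points that require care are the $p=\ell$ versus $p\neq\ell$ split in (1)--(2), and the fact that maximality of $q$ is used only in (6).
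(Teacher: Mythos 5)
Your proof is correct and takes essentially the same route as the paper's: each part is obtained by intersecting the dichromaticity constraints imposed by the triangles $(u,v_r,v_p)$ and $(u,v_r,v_q)$, with the monochromaticity prohibition on $(u,v_p,v_\ell)$ (resp.\ $(u,v_r,v_p)$ when $p=\ell$) handling (4) and (2), and the maximality of $q$ entering only in (6), exactly as in the paper.
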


\begin{proof}
For (1), let $v_r\sqsubset v_p$. Then $\lambda(v_r,v_p) = p$ and
$\lambda(v_r,v_q) = q$, so $\lambda(u,v_r)\in\{\ell,p\}\cap\{\ell,q\}$, as $p\neq\ell$.
Hence $\lambda(u,v_r) = \ell$. 

For (2), let $v_r\sqsubset v_p$. By transitivity, $v_r \sqsubset v_q$, so $\lambda(v_r, v_q) = q$, hence $\lambda(u,v_r) \in \{\ell,q\}$. As $p = \ell$, we  also have $\lambda(v_r,v_p) = \ell$, so $\lambda(u,v_r) \neq \ell$, hence $\lambda(u,v_r) = q$.

%

Next, (3) follows immediately from (1) and (2).

For (4), let $v_p\sqsubset v_\ell\sqsubset v_q$. Then 
$\lambda(u,v_\ell)\in \{\ell,q\}$, as $\lambda(u,v_q) = \ell$ and $\lambda(v_\ell,v_q) = q$. Since $\lambda(u,v_p) = \ell =
\lambda(v_\ell,v_p)$, we also have $\lambda(u,v_\ell)\neq \ell$, hence
$\lambda(u,v_r) = q$.

For (5), let $v_p\sqsubset v_r\sqsubset v_q$ and $r\neq \ell$. Then, 
$\lambda(v_p,v_r) = r$ and $\lambda(v_q,v_r) = q$, so 
$\lambda(u,v_r)\in\{\ell,r\}\cap\{\ell,q\}$, as $r\neq \ell$.
Hence $\lambda(u,v_r) = \ell$.

For (6), we have $\lambda(u,v_r)\in\{\ell,r\}$, but  $\lambda(u,v_r) = \ell$
contradicts the assumption that $v_q$ is the largest vertex with a repeat.
Hence, $\lambda(u,v_r) = r$.
\end{proof}

\begin{lemma}\label{lem:ell}
Let $G$ be a naturally ordered subgraph of $B$, and let $u\notin G$.  
Then, $v_\ell = v_{q-1}$ or $v_\ell = v_{q+1}$. Therefore,
$\ell = q-1$ or $\ell = q+1$.
\end{lemma}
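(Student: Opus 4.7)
The plan is to prove the contrapositive: assuming $\ell \notin \{q-1, q+1\}$ (we already have $\ell \neq q$ from $\lambda(v_p,v_q)=q$), I shall exhibit a triangle on $u$ and two vertices of $G$ that is either monochromatic or trichromatic, contradicting the dichromaticity of all triangles required by a qualitative representation of $\mathfrak{E}_{n+1}^{\{2\}}$. All the colour values $\lambda(u, v_r)$ I need have already been catalogued in Lemma~\ref{lem:about-ell}, so the task reduces to choosing the right triangle in each case. Since $\ell$ is a proper colour, $\ell \geq 1$. The assumed range of $\ell$ splits into four subcases: (i) $\ell < p$, (ii) $\ell = p$, (iii) $p < \ell \leq q-2$, and (iv) $\ell \geq q + 2$.

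For the first three cases I only need to read off colours from Lemma~\ref{lem:about-ell}. In (i), part (1) gives $\lambda(u, v_r) = \ell$ for every $r \leq p$; in particular for $r = 0$ and $r = \ell$, so the triangle $(u, v_0, v_\ell)$ has all three edges coloured $\ell$ (using $\lambda(v_0, v_\ell) = \ell$), a monochromatic triangle. In (ii), $p = \ell \geq 1$, and parts (2) and (5) give $\lambda(u, v_0) = q$ and $\lambda(u, v_{q-1}) = p$; together with $\lambda(v_0, v_{q-1}) = q-1$ the triangle $(u, v_0, v_{q-1})$ carries the pairwise distinct colours $\{q, p, q-1\}$ (using $1 \leq p \leq q-2$), hence is trichromatic. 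In (iii), parts (4) and (5) give $\lambda(u, v_\ell) = q$ and $\lambda(u, v_{\ell+1}) = \ell$; together with $\lambda(v_\ell, v_{\ell+1}) = \ell+1$ this yields the trichromatic triangle $(u, v_\ell, v_{\ell+1})$ with colour set $\{q, \ell, \ell+1\}$ (distinct because $\ell + 1 \leq q - 1$).

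Case (iv) is the one I expect to be the main obstacle. Here $\ell - 1 > q$, so part (6) of Lemma~\ref{lem:about-ell} gives $\lambda(u, v_{\ell-1}) = \ell - 1$, but the lemma says nothing directly about $\lambda(u, v_\ell)$. Dichromaticity of $(u, v_{\ell-1}, v_\ell)$, whose $G$-edge has colour $\ell$, forces $\lambda(u, v_\ell) \in \{\ell-1, \ell\}$. If $\lambda(u, v_\ell) = \ell$ then the triangle $(u, v_q, v_\ell)$ has all three edges coloured $\ell$, a monochromatic triangle. If instead $\lambda(u, v_\ell) = \ell - 1$, then $v_\ell$ and $v_{\ell-1}$ share the same colour to $u$, so $v_\ell$ is a vertex with a repeat whose index exceeds $q$, contradicting the maximality clause on $q$. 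The delicate point is recognising that the maximality of $q$ is exactly what rules out this last sub-case; without it one would only derive $\ell \geq q+1$, which is not enough to conclude.
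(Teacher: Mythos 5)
Your proof is correct and follows essentially the same route as the paper's: a case analysis on the position of $\ell$ relative to $p$ and $q$, using Lemma~\ref{lem:about-ell} to pin down the colours $\lambda(u,v_r)$ and then exhibiting either a forbidden monochromatic/trichromatic triangle or a violation of the maximality of $q$. The only differences are cosmetic choices of witnessing vertices (e.g.\ $v_{\ell-1}$ rather than $v_{q+1}$ in the case $\ell\geq q+2$), so no further comparison is needed.
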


\begin{proof}
We will apply Lemma~\ref{lem:about-ell} repeatedly, and referring to numbers (1)--(6) under the tacit understanding that  these will be the items in Lemma~\ref{lem:about-ell}.  Firstly, assume that \(p = \ell\) and \(p < r < q\). From (5), we get \(\lambda(u,v_r) = \ell\). By assumption, \(\lambda(u,v_q) = \ell\). By transitivity, if \(i < p\), then \(\lambda(v_i,v_r) = r\), and \(\lambda(v_i, v_q) = q\), hence \(\lambda(u,v_i) \in \{\ell, r\} \cap \{\ell, q\}\). Since \(r \neq q\), we have \(\lambda(u,v_i) = \ell\). This contradicts (2), so such a configuration cannot occur.  As \(p > 0\), such an \(i\) always exists. So, when \(p = \ell\), we require that no such \(r\) exists, i.e., that \(\ell = p = q-1\). We can now assume that \(p \neq \ell\).

Now assume $p < \ell < q-1$. Then \(p < q-1 <q\), so by (5), we have \(\lambda(u,v_{q-1}) = \ell\). As \(\lambda(v_{\ell},v_{q-1}) = q-1\) and \(\lambda(v_{\ell},v_{q}) = q\), we have $\lambda(u,v_\ell)\in\{\ell,q\}\cap\{\ell,q-1\} = \{\ell\}$. This contradicts (4), as \(\ell \neq q\). It follows that $p < \ell < q$ implies $\ell = q-1$.

Assume $q< \ell$. Clearly, \(\lambda(v_q, v_\ell) = \ell\). By (6), \(q+1 < \ell\) means \(\lambda(u,v_{q+1}) = q+1\), so \(\lambda(u,v_\ell) \in \{\ell, q+1\}\). But \(\lambda(u,v_q) = \ell = \lambda(v_q, v_\ell)\), so we have \(\lambda(u,v_{\ell}) = q+1\), which contradicts our minimality assumption on repeated labels of edges with \(u\). Thus, we must have \(\ell = q+1\) if \(q < \ell\).

Assume $0 < \ell < p$. Then by (1), we have $[\ell,\ell,\ell] = [\lambda(v_\ell,v_0), \lambda(u,v_\ell), \lambda(u,v_0)]$, which is a contradiction, as monochromatic triangles are forbidden.

Finally, the case $q = \ell$ is impossible due to $[\lambda(u,v_q), \lambda(u,v_p), \lambda(v_p,v_q)]$, so the results cover all cases.
\end{proof}

\begin{lemma}\label{lem:3-cases}
Let $G$ be a naturally ordered subgraph of $B$, and let $u\notin G$.
Let $p$, $q$ and $\ell$ be as in Lemmas~\ref{lem:about-ell} and~\ref{lem:ell}.  
Then exactly one of the following three cases  occurs\textup:
\begin{enumerate}
\item $\lambda(u,v_i) = q+1$ for all $i\in\{0,\dots,q\}$,\\
  $\lambda(u,v_{q+1})\in\{1,\dots,q\}$,\\
  $\lambda(u,v_j) = j$ for all $j\in\{q+2,\dots, n\}$.
\item $\lambda(u,v_i) = q-1$ for all $i\in\{0,\dots,q-2\}\cup\{q\}$,\\
$\lambda(u, v_{q-1}) = q$,\\
$\lambda(u,v_j) = j$ for all $j\in\{q+1,\dots, n\}$.   
\item $\lambda(u,v_i) = q$ for all $i\in\{0,\dots,q-2\}$,\\
$\lambda(u,v_{q-1}) = \lambda(u,v_{q}) = q-1$,\\
$\lambda(u,v_j) = j$ for all $j\in\{q+1,\dots, n\}$.      
\end{enumerate}  
\end{lemma}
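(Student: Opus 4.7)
The plan is a direct case analysis driven by Lemma~\ref{lem:ell}, which restricts $\ell$ to $\{q-1,q+1\}$, together with the maximality assumption on $q$. In each case I will read off $\lambda(u,v_i)$ for every $i \in \{0,\dots,n\}$ from the six clauses of Lemma~\ref{lem:about-ell}, and verify the tabulated list matches one of the three options. Mutual exclusivity will then be immediate from the value of $\lambda(u,v_0)$, which is $q+1$, $q-1$ and $q$ respectively in the three cases (with the small edge case $q=1$ forcing $\ell=q+1$ and hence Case~(1) alone, since $\ell=0$ is not a colour).

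When $\ell = q+1$ we have $p < q < \ell$, so item~(1) of Lemma~\ref{lem:about-ell} gives $\lambda(u,v_r) = q+1$ for every $r \leq p$; item~(5) gives the same for $p < r < q$ (no such $r$ equals $\ell$); and together with $\lambda(u,v_q) = q+1$ this pins down every index up to $q$. Item~(6) yields $\lambda(u,v_j) = j$ for $j > q+1$. Only $\lambda(u,v_{q+1})$ remains, and here the maximality of $q$ takes over: this value must differ from $\lambda(u,v_s)$ for all $s \geq q$ with $s \neq q+1$, which rules out $q+1$ (matched by $v_q$) and each $j \in \{q+2,\dots,n\}$ (matched by $v_j$), leaving exactly the interval $\{1,\dots,q\}$ required by Case~(1).

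For $\ell = q-1$ the proof of Lemma~\ref{lem:ell} forces $p \leq \ell$, so I split further on whether $p < \ell$ or $p = \ell$. If $p < \ell$, items~(1) and~(5) give $\lambda(u,v_i) = q-1$ for $i \in \{0,\dots,q-2\}$, item~(4) (applicable since $p < \ell < q$) produces the anomalous value $\lambda(u,v_{q-1}) = q$, and combined with $\lambda(u,v_q) = q-1$ and item~(6) for $r > q$ this matches Case~(2). If $p = \ell = q-1$, item~(2) replaces item~(1) to give $\lambda(u,v_r) = q$ for $r < q-1$, the interval $(p,q)$ is empty so items~(4) and~(5) contribute nothing, and $\lambda(u,v_p) = \lambda(u,v_q) = q-1$ together with item~(6) yields Case~(3). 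The one real bookkeeping point is that items~(1) and~(2) of Lemma~\ref{lem:about-ell} are mutually exclusive hypotheses matching exactly these two subcases under $\ell=q-1$, and the presence of item~(4) is precisely what creates the anomalous label at $v_{q-1}$ distinguishing Case~(2) from Case~(3); once the split is set up, everything else falls out by direct substitution.
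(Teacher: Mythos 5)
Your proposal is correct and follows essentially the same route as the paper's proof: split on $\ell=q+1$ versus $\ell=q-1$ via Lemma~\ref{lem:ell}, subdivide the latter according to whether $p=q-1$ or $p<q-1$, read off all values from the items of Lemma~\ref{lem:about-ell}, and use the maximality of $q$ to constrain $\lambda(u,v_{q+1})$ in case (1). The additional remarks on mutual exclusivity and the $q=1$ edge case are harmless refinements of the same argument.
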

\begin{proof}
By Lemma~\ref{lem:ell}, we
have either $\ell = q+1$ or $\ell = q-1$.
Thus, $\ell = q+1$ implies $\lambda(u,v_q) = q+1$, and
then cases~(1), (5), and~(6) of Lemma~\ref{lem:about-ell} apply, showing that
the first and the third items of (1) above hold.  For the second item,
note that $\lambda(u,v_{q+1}) \geq q+1$ contradicts the assumption of
$v_q$ being the largest vertex with a repeat. 

If $\ell = q-1$, then we have $\lambda(u,v_q) = q-1=\lambda(u, v_p)$.  Recalling that $p<q$, we will discover that case (3) above will result from $p=q-1$ and case (2) from $p<q-1$.

If $p = q-1$, then case~(2) of Lemma~\ref{lem:about-ell} applies
yielding $\lambda(u,v_j) = q$ for all $j\in\{0,\dots, q-2\}$, and these together
with case~(6) of the same lemma imply that~(3) above holds.

Finally, if $\ell = q-1$ and $\lambda(u, v_p) = q-1$ for $p<q-1$, then
$\lambda(u,v_q) = q-1$, and by case~(4) of Lemma~\ref{lem:about-ell} we have
$\lambda(u,v_{q-1}) = q$. 
As $p\neq q-1$, case~(1) of Lemma~\ref{lem:about-ell} applies, giving
$\lambda(u,v_j) = q-1$ for all $j\leq p$. Further, case~(5) yields
$\lambda(u,v_j) = q-1$ for all $j\in\{p+1,\dots,q-2\}$. Putting all of these together,
we have $\lambda(u,v_j) = q-1$ for all $j\in\{0,\dots, q-2\}\cup\{q\}$, and then
applying case~(6) we obtain $\lambda(u,v_j) = j$ for all $j>q$, as required by
(2) above. 
\end{proof}

\begin{cor}\label{1-forces-n}
Let $G$ be a naturally ordered subgraph of $B$, and let $u\notin G$.
Then, the following hold\textup:
\begin{enumerate}
\item If $\lambda(u,v_{i}) < i$, then $\lambda(u,v_j) = j$ for all $j>i$.
\item If for some $j$ we have $\lambda(u,v_j) = j$ and $\lambda(u,v_{j-1}) =
  j-1$, then there is an $i<j-1$ with $\lambda(u,v_i) < i$.
\item $|\lambda(u,v_0)-\lambda(u,v_1)|\leq 1$.  
\end{enumerate}
\end{cor}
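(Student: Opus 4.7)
The plan is to dispatch each of the three parts by a direct case analysis on the patterns for $i \mapsto \lambda(u, v_i)$ supplied by Lemma~\ref{lem:3-cases}. In each of those three cases the function has a fully explicit description: an initial constant block whose value strictly exceeds the index, a prescribed behaviour at the boundary indices $q-1, q, q+1$, and coincidence with the identity on a tail of $\{0, \dots, n\}$. With this structure in hand the corollary is essentially a read-off, with only small-$q$ degeneracies to watch.

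For (1), I would enumerate in each case the indices $i$ for which $\lambda(u, v_i) < i$: in Case~(1) of Lemma~\ref{lem:3-cases} only $i = q+1$ works (as $\lambda(u, v_{q+1}) \leq q$), while in Cases~(2) and~(3) only $i = q$ works (as $\lambda(u, v_q) = q-1$). In each instance, every $j > i$ lies in the identity tail of the case in question, so $\lambda(u, v_j) = j$ holds directly by the third clause of the case.

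For (2), the key remark is that $\lambda(u, v_k) = k$ can hold only in the tail, with the single exception that in Case~(3) it also holds at $k = q-1$. Consequently, the joint requirement $\lambda(u, v_j) = j$ and $\lambda(u, v_{j-1}) = j-1$ forces both $j$ and $j-1$ to live in the tail, yielding $j \geq q+3$ in Case~(1) and $j \geq q+2$ in Cases~(2) and~(3). In each case taking $i = q+1$ or $i = q$ respectively (safely inside the range $i < j-1$) supplies the required witness with $\lambda(u, v_i) < i$.

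For (3), I would simply read off $\lambda(u, v_0)$ and $\lambda(u, v_1)$ from the three cases, after noting the minimality constraints $q \geq 1$ in Case~(1) (needed so that $\lambda(u, v_{q+1}) \in \{1, \dots, q\}$ is nonempty) and $q \geq 2$ in Cases~(2) and~(3) (needed so that $\ell = q-1$ is a proper colour). Whenever the constant block contains both indices $0$ and $1$, the difference is $0$; the only remaining possibility is $q = 2$ in Cases~(2) or~(3), where $v_1 = v_{q-1}$ and the two values differ by exactly $1$. The main obstacle is not conceptual but organisational: one must carefully handle the tight cases ($q = 1$ in Case~(1) and $q = 2$ in Cases~(2) and~(3)), where the constant block shrinks and the boundary indices crowd together, so that the correct value of $\lambda(u, v_\cdot)$ is assigned at $v_0$ and $v_1$.
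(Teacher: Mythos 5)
Your proposal is correct and follows exactly the route the paper takes: the paper's entire proof is ``Follows by inspecting the cases of Lemma~\ref{lem:3-cases}'', and your case-by-case read-off (including the boundary checks at $q=2$ and the exceptional index $k=q-1$ in Case~(3)) is a sound and more explicit version of that inspection.
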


\begin{proof}
Follows by inspecting the cases of Lemma~\ref{lem:3-cases}.
\end{proof}  

It will be useful to state separately one special case of
Lemma~\ref{lem:3-cases}, covering the case where the largest repeat occurs
somewhere at the last three vertices.

\begin{lemma}\label{lem:3-cases-n}
Let $G$ be a naturally ordered subgraph of $B$, and let $u\notin G$.
Assume $\lambda(u,v_n)\neq n$. The exactly one of the following three cases occurs\textup:
\begin{enumerate}
\item $\lambda(u,v_i) = n$ for all $i\in\{0,\dots,n-1\}$,\\
  $\lambda(u,v_n)\in\{1,\dots,n-1\}$.
\item $\lambda(u,v_i) = n-1$ for all $i\in\{0,\dots,n-2\}\cup\{n\}$,\\
$\lambda(u, v_{n-1}) = n$.
\item $\lambda(u,v_i) = n$ for all $i\in\{0,\dots,n-2\}$,\\
$\lambda(u,v_{n-1}) = \lambda(u,v_{n}) = n-1$.
\end{enumerate}  
\end{lemma}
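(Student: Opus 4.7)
The plan is to derive this lemma as a direct specialisation of Lemma~\ref{lem:3-cases}. The hypothesis $\lambda(u,v_n)\neq n$ pins down the parameter $q$ (the largest vertex with a repeated $u$-adjacency) in each of the three cases, and the three conclusions follow by substitution.

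I would start by applying Lemma~\ref{lem:3-cases} to obtain some $q$ and one of its three configurations, then handle the cases in turn. In case~(1) of Lemma~\ref{lem:3-cases}, the condition $\lambda(u,v_j)=j$ for all $j\in\{q+2,\ldots,n\}$ would give $\lambda(u,v_n)=n$ whenever $q+2\leq n$, contradicting the hypothesis; hence $q\geq n-1$. On the other hand, the value $\ell=q+1$ appearing as a colour forces $q+1\leq n$. Thus $q=n-1$, and substitution makes $\{0,\ldots,q\}=\{0,\ldots,n-1\}$, gives $\lambda(u,v_{q+1})=\lambda(u,v_n)\in\{1,\ldots,n-1\}$, and renders the trailing index set $\{q+2,\ldots,n\}$ empty; this is exactly case~(1) of the present lemma.

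For cases~(2) and~(3) of Lemma~\ref{lem:3-cases}, the tail condition $\lambda(u,v_j)=j$ for $j\in\{q+1,\ldots,n\}$ similarly forces $q\geq n$, hence $q=n$. Substituting $q=n$ collapses $\{q+1,\ldots,n\}$ to the empty set and relabels $\{q-2,q-1,q\}$ as $\{n-2,n-1,n\}$; case~(2) of Lemma~\ref{lem:3-cases} thus becomes case~(2) here, and case~(3) becomes case~(3). The exactly-one-of-three clause carries over from Lemma~\ref{lem:3-cases}.

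There is no genuine obstacle: the proof is essentially bookkeeping, and the only care needed is in noting that the trailing index sets $\{q+2,\ldots,n\}$ and $\{q+1,\ldots,n\}$ become empty when $q=n-1$ and $q=n$ respectively, so that the third clauses of the source cases disappear in the specialised statement.
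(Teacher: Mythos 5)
Your derivation is correct and is exactly the intended argument: the paper states this lemma without proof as an immediate specialisation of Lemma~\ref{lem:3-cases}, and your bookkeeping (pinning $q=n-1$ in case~(1) via the tail condition together with $\ell=q+1\leq n$, and $q=n$ in cases~(2) and~(3) via the tail condition) fills in that omitted step correctly. The only cosmetic addition one might make is to note that the three specialised configurations remain pairwise distinct (e.g.\ they disagree on $\lambda(u,v_0)$ or $\lambda(u,v_{n-1})$ since $n>2$), so the ``exactly one'' clause is indeed inherited.
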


Now, by definition, $G$ does not
contain the triangle $[1,1,n]$, so if $B$ is the base of a qualitative
representation of 
$\mathfrak{E}_{n+1}^{\{1,3\}}$, there must exist vertices $a,b,c\in B$ realising this
triangle. We can assume $\lambda(a,b) = n$ and 
$\lambda(a,c) = 1 = \lambda(b,c)$. We have several cases to consider.

\begin{lemma}\label{all-out}
Let $a,b,c \notin G$. Then, $G\cup\{a,b,c\}$ is an inconsistent configuration.
\end{lemma}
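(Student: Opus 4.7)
My plan is a case analysis driven by Lemmas~\ref{lem:3-cases} and~\ref{lem:3-cases-n}, which classify, for each outside vertex $u \in \{a, b, c\}$, exactly how the edges from $u$ into $G$ are coloured. Each of $a, b, c$ falls into one of three cases, parameterised by a value $q_u$ (with boundary constraints), so there are essentially $27$ joint configurations to consider; the task is to exclude all of them using the prescribed edge colours $\lambda(a,b) = n$, $\lambda(a,c) = \lambda(b,c) = 1$ and the requirement that every triangle be dichromatic.

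First I would probe the triangles $(a, c, v_0)$, $(b, c, v_0)$ and $(a, b, v_0)$. The value $\lambda(u, v_0)$ is pinned down by the case of $u$: it is $q_u + 1$ in case~(1), $q_u - 1$ in case~(2), and $q_u$ in case~(3), and is always at least $1$. Combined with $\lambda(a, c) = \lambda(b, c) = 1$, the dichromatic requirement on $(a, c, v_0)$ and $(b, c, v_0)$ forces, modulo a degenerate sub-case where some $\lambda(u, v_0)$ equals $1$ (which pins $u$ to case~(2) with $q_u = 2$, and is easy to rule out via $(a, b, v_0)$ directly), the equality $\lambda(a, v_0) = \lambda(b, v_0) = \lambda(c, v_0) = k$ for a common colour $k$. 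The triangle $(a, b, v_0)$ together with $\lambda(a, b) = n$ then forces $k \neq n$. This ties the three parameters $q_a, q_b, q_c$ tightly together: each belongs to $\{k-1, k, k+1\}$ with a corresponding case assignment.

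Next I would close the argument using a second probe vertex. The natural candidates are $v_n$ -- where Lemma~\ref{lem:3-cases-n} gives a parallel trichotomy for $\lambda(u, v_n)$, and the triangle $(a, b, v_n)$ with colour $n$ combined with $(a, c, v_n)$ and $(b, c, v_n)$ with colour $1$ restricts the case assignments from the opposite end -- or else one of $v_{k-1}, v_k, v_{k+1}$, where the formulas of Lemma~\ref{lem:3-cases} assign values that distinguish the three cases. In either approach, the triangles $(a, b, v_i)$, $(a, c, v_i)$, $(b, c, v_i)$ overlay simultaneous constraints that, after the reduction at $v_0$, leave only a handful of surviving branches, each contradicted by a direct colour conflict.

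The main obstacle is keeping the bookkeeping manageable: the joint $(q_a, q_b, q_c, \text{cases})$ configuration has many branches, and I would exploit the symmetry between $a$ and $b$ (only the roles of the two colour-$1$ edges with $c$ distinguish them) to halve the number of cases. The delicate points are the boundary values $q = 2$ (where $\lambda(u, v_0) = 1$ sits inside the base colour) and $q = n$ (where Lemma~\ref{lem:3-cases-n} replaces Lemma~\ref{lem:3-cases} at the top), and these must be checked in isolation since the generic formulas degenerate there.
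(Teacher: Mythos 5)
Your overall strategy (classify each of $a,b,c$ via Lemma~\ref{lem:3-cases}, then overlay triangle constraints at probe vertices) is workable, and your first reduction is a genuinely nice observation: in the non-degenerate case the triangles $(a,c,v_0)$ and $(b,c,v_0)$ force $\lambda(a,v_0)=\lambda(b,v_0)=\lambda(c,v_0)=k$ with $k\neq 1$, and $(a,b,v_0)$ then gives $k\neq n$; probing at $v_n$ then shows $\lambda(u,v_n)=n$ for every $u$ except those in case~(2) with $q_u=n$, so the triangle $(a,b,v_n)$ forces $k=n-1$ and pins at least one of $a,b$ to that case. This is a different entry point from the paper, which starts at $v_n$: since $\lambda(a,b)=n$, at most one of $\lambda(a,v_n)$, $\lambda(b,v_n)$ equals $n$, so Lemma~\ref{lem:3-cases-n} applies to (say) $a$, and the three resulting shapes of $a$'s edges into $G$ are then propagated to $b$ and $c$.

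However, there are two genuine gaps. First, your dismissal of the degenerate sub-case is wrong as stated: if $\lambda(c,v_0)=1$ (so $c$ is in case~(2) with $q_c=2$), the triangle $(a,b,v_0)$ does not involve $c$ at all and yields no contradiction, while $(a,c,v_0)$ and $(b,c,v_0)$ remain consistently dichromatic provided $\lambda(a,v_0),\lambda(b,v_0)\neq 1$; likewise if $\lambda(a,v_0)=1$ then $(a,b,v_0)$ merely forces $\lambda(b,v_0)\in\{1,n\}$ rather than contradicting anything. These sub-cases can be killed, but only by chasing further (for instance, $\lambda(c,v_n)=n$ forces $\lambda(a,v_n),\lambda(b,v_n)\in\{1,n\}$ and not both equal to $n$, whence Lemma~\ref{lem:3-cases-n}(1) applies to one of $a,b$ and the triangle $(a,c,v_1)$ or $(b,c,v_1)$ becomes trichromatic) --- that is real work your plan does not do. Second, and more importantly, after the $v_n$ probe the surviving branch ($k=n-1$ with one of $a,b$ in case~(2), $q=n$) still splits into several sub-cases according to the shapes of $c$ and of the other of $a,b$, and dispatching these is exactly the content of the paper's cases (2.1), (2.2), (1.1/3.1) and (1.2/3.2). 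A plan that promises ``a handful of surviving branches, each contradicted by a direct colour conflict'' without exhibiting those conflicts has deferred essentially all of the substance of the lemma.
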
  

\begin{proof}
Consider $\lambda(a,v_n)$ and $\lambda(b,v_n)$. They cannot be both equal to
$n$, so assume $\lambda(a,v_n) \neq n$.
Taking $u = a$, we see that the following
possibilities can occur.

We begin with case (2) of Lemma~\ref{lem:3-cases-n}. We have  
$\lambda(a,v_{i}) = n-1$ for all $i\in \{0,\dots,{n-2}\}\cup\{n\}$ and
$\lambda(a,v_{n-1}) = n$. This implies $\lambda(b,v_{n-1}) \neq n$.
Let $k = \lambda(b,v_{n-1})$.

(2.1) If $k < n-1$, then by Corollary~\ref{1-forces-n}(1) we get
$\lambda(b,v_n) = n$.
Hence, $\lambda(c,v_n) \in \{\lambda(c,a), \lambda(a,v_n)\}
\cap\{\lambda(c,b), \lambda(b,v_n)\} = \{1,n-1\}\cap\{1,n\} = \{1\}$.
So, $\lambda(c,v_n) = 1$ and then
Lemma~\ref{lem:3-cases-n}(1) applies, giving
$\lambda(c,v_i) = n$ for all $i<n$.
In particular, $\lambda(c,v_{n-2}) = n$, and since
$\lambda(c,a) = 1$ and $\lambda(a,v_{n-2}) = n-1$, we obtain
an inconsistent, trichromatic triangle $(a,c,v_{n-2})$. 

(2.2) If $k = n-1$, then $\lambda(c,v_{n-1})\in \{\lambda(c,a), \lambda(a,v_{n-1})\}
\cap\{\lambda(c,b), \lambda(b,v_{n-1})\} =
\{1,n\}\cap\{1,n-1\} = \{1\}$. So, $\lambda(c,v_{n-1}) = 1$ and 
thus by Corollary ~\ref{1-forces-n}(1) we have $\lambda(c,v_n) = n$. 
Then, however, we obtain
$\lambda(a,c) = 1$, $\lambda(a,v_n) = n-1$, and $\lambda(c,v_n) = n$; an
inconsistent, trichromatic triangle.

Next, cases (1) and (3) of Lemma~\ref{lem:3-cases-n} will be dealt with
together. For case (1) we have 
$\lambda(a,v_i) = n$ for all $i<n$. As $\lambda(a,b) = n$, it follows that
$\lambda(b,v_i)\neq n$ for all $i<n$. For case (3), we have 
$\lambda(a,v_{n-1}) = \lambda(a,v_{n}) = n-1$, and
$\lambda(a,v_{i}) = n$ for all $i<n-1$. Then,
$\lambda(b,v_{i}) \neq n$ for all $i<n-1$.
Therefore in both cases we have $\lambda(b,v_{i}) \neq n$ for all $i<n-1$.
Since $n>2$, we get $\lambda(b,v_0) \neq n\neq \lambda(b,v_1)$.

(1.1/3.1) If $\lambda(b,v_i)\neq 1$ for $i \in\{0,1\}$, then  
$\lambda(c,v_i) \in \{1,n\}\cap \{1,\lambda(b,v_i)\} = \{1\}$ for $i\in\{0,1\}$,
and then $\lambda(c,v_0) = \lambda(c,v_1) =
\lambda(v_0,v_1) = 1$; an inconsistent, monochromatic triangle. 

(1.2/3.2) Assume $\lambda(b,v_i) = 1$ for exactly one $i\in\{0,1\}$.
Putting $\{s,t\} = \{0,1\}$, let 
$\lambda(b,v_s) = 1$ and $\lambda(b,v_t) \neq 1$. Thus,
$\lambda(c,v_s) = n$ and $\lambda(c,v_t) = 1$, contradicting Corollary \ref{1-forces-n}(3).
\end{proof}  

\begin{lemma}\label{two-out-n}
Let $a,b\notin G$ and $c\in G$. Then, $G\cup\{a,b\}$ is an inconsistent configuration.
\end{lemma}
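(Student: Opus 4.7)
The plan is to show that the hypothesis forces a monochromatic triangle of colour $n$, contradicting the qualitative representability of $\mathfrak{E}_{n+1}^{\{2\}}$.

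Write $c = v_i$ for some $i \in \{0,\dots,n\}$. I first examine the triangle $(a,b,v_n)$, whose edge colours are $n = \lambda(a,b)$, $\lambda(a,v_n)$ and $\lambda(b,v_n)$. If both $\lambda(a,v_n)$ and $\lambda(b,v_n)$ equal $n$, this triangle is already monochromatic and we are done. Otherwise, since $a$ and $b$ enter the hypothesis symmetrically (both satisfy $\lambda(\cdot,c)=1$ and $\lambda(a,b)=\lambda(b,a)=n$), I may assume $\lambda(a,v_n) \neq n$, which is precisely the hypothesis required to invoke Lemma~\ref{lem:3-cases-n} with $u = a$.

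The second step is a short case analysis of Lemma~\ref{lem:3-cases-n} against the constraint $\lambda(a,v_i) = 1$. Since $n \geq 3$, the colours $1$, $n-1$ and $n$ are pairwise distinct. In case~(2) every $\lambda(a,v_j)$ lies in $\{n-1,n\}$, and in case~(3) every $\lambda(a,v_j)$ lies in $\{n,n-1\}$; both are incompatible with $\lambda(a,v_i) = 1$. Only case~(1) can hold, which forces $\lambda(a,v_j) = n$ for all $j < n$ and $\lambda(a,v_n) \in \{1,\dots,n-1\}$, thereby pinning down $i = n$ and $\lambda(a,v_n) = 1$.

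The third step applies Lemma~\ref{lem:3-cases-n} once more, now to $u = b$: since $i = n$ we have $\lambda(b,v_n) = \lambda(b,c) = 1 \neq n$, so the same three-case inspection forces case~(1) for $b$, yielding $\lambda(b,v_j) = n$ for all $j < n$. Then the triangle $(a,b,v_0)$ has all three edges coloured $n$, the desired contradiction. I do not anticipate any real obstacle here: the heavy lifting has already been carried out in Lemmas~\ref{lem:about-ell}--\ref{lem:3-cases-n}, which constrain the colouring of any outside vertex on $G$ to a tight family of patterns. The only step requiring care is the symmetry reduction at the start, which is legitimate because the roles of $a$ and $b$ in the hypothesis are interchangeable.
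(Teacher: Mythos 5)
Your proof is correct and follows essentially the same route as the paper's: reduce to $\lambda(a,v_n)\neq n$ by symmetry (noting that otherwise $(a,b,v_n)$ is already monochromatic), use $\lambda(a,c)=1$ to rule out cases (2) and (3) of Lemma~\ref{lem:3-cases-n} and pin down $c=v_n$, then repeat for $b$ to force a monochromatic triangle of colour $n$ on $(a,b,v_j)$ for $j<n$. The paper uses $v_{n-1}$ where you use $v_0$, an immaterial difference.
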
  

\begin{proof}
By the assumption, $c = v_j$ for some $j$, so $\lambda(a,v_j) = 1 =
\lambda(b,v_j)$. As in the previous lemma, we can have at most one of 
$\lambda(a,v_n)$, $\lambda(b,v_n)$ equal to $n$, so without loss assume
$\lambda(a,v_n)\neq n$. We analyse possible
cases.

Case (1) of Lemma~\ref{lem:3-cases-n}. We have
$\lambda(a,v_i) = n$ for all $i \in \{0,\dots,n-1\}$, which can only happen
if $j = n$. But, $\lambda(b,v_j) = 1$ as well, so, as $1<n-1$, case (1) is the only
possibility for $\lambda(b,v_j)$, and thus we get
$\lambda(b,v_i) = n$ for all $i \in \{0,\dots,n-1\}$. In particular,
$\lambda(a,v_{n-1}) = \lambda(b,v_{n-1}) = \lambda(a,b) = n$, producing an inconsistent,
monochromatic triangle. 

Case (2) of Lemma~\ref{lem:3-cases-n}. We have 
$\lambda(a,v_{i}) = n-1$ for all $i\in \{0,\dots,{n-2}\}\cup\{n\}$ and
$\lambda(a,v_{n-1}) = n$. Since $\lambda(a,v_j) = 1$ for some $j$, 
it implies $n = 2$, contradicting the assumptions.

Case (3) of Lemma~\ref{lem:3-cases-n}. We have
$\lambda(a,v_{n-1}) = \lambda(a,v_{n}) = n-1$, and
$\lambda(a,v_{i}) = n$ for all $i\in\{0,\dots,n-2\}$.
Since $\lambda(a,v_j) = 1$ for some $j$, it implies $n = 2$, 
contradicting the assumptions.
\end{proof}

\begin{lemma}\label{two-out-1}
Let $b,c\notin G$ and $a\in G$. Then, $G\cup\{b,c\}$ is an inconsistent configuration.
\end{lemma}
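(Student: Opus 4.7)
The plan is to assume that $G \cup \{b, c\}$ is consistent and derive a contradiction. Writing $a = v_j$, the hypotheses become $\lambda(b, v_j) = n$, $\lambda(c, v_j) = 1$, and $\lambda(b, c) = 1$. Because $b, c \notin G$, each of $b$ and $c$ follows one of the three colour patterns of Lemma~\ref{lem:3-cases}, with its own parameter $q$, and the extra constraint we exploit is that for every $i$ the triangle $(b, c, v_i)$ must be dichromatic.

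First I would translate the single-vertex constraints into a short list of admissible patterns. Inspecting Lemma~\ref{lem:3-cases}, the condition $\lambda(c, v_j) = 1$ allows exactly three configurations: (A)~case~(1) with $q_c = j - 1$ and $j \geq 2$; (B)~case~(2) with $q_c = 2$ and $j \in \{0, 2\}$; or (C)~case~(3) with $q_c = 2$ and $j \in \{1, 2\}$. Likewise, $\lambda(b, v_j) = n$ forces either (I)~case~(1) with $q_b = n - 1$ (any $j \leq n - 1$); (II)~case~(2) with $q_b = n$ and $j = n - 1$; (III)~case~(3) with $q_b = n$ and $j \leq n - 2$; or (IV)~any of the three cases with $q_b < n$ and $j = n$.

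I would then split on whether $j < n$ or $j = n$. If $j < n$, the pattern of $c$ yields $\lambda(c, v_n) = n$, so the triangle $(b, c, v_n)$ forces $\lambda(b, v_n) \in \{1, n\}$. In options (II) and (III) we have $\lambda(b, v_n) = n - 1$, which lies in $\{1, n\}$ only when $n \leq 2$; thus only (I) survives, with $\lambda(b, v_n) = 1$ and $\lambda(b, v_i) = n$ for every $i \leq n - 1$. The triangles $(b, c, v_0)$ and $(b, c, v_1)$ then force $\lambda(c, v_0), \lambda(c, v_1) \in \{1, n\}$; but (A) makes both values equal to $j \in \{2, \ldots, n - 1\}$, while (B) and (C) place the value $2$ at $v_0$ or $v_1$, none of which lies in $\{1, n\}$ for $n \geq 3$.

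The remaining case $j = n$ collapses $c$ into (A) with $q_c = n - 1$, giving $\lambda(c, v_i) = n$ for $i < n$ and $\lambda(c, v_n) = 1$, while $b$ sits in one of the three subcases of (IV) and has $\lambda(b, v_n) = n$. The triangles $(b, c, v_i)$ for $i < n$ then force $\lambda(b, v_i) \in \{1, n\}$. However, in each subcase the bulk colour $\lambda(b, v_0)$ evaluates to $q_b + 1$, $q_b - 1$, or $q_b$ (in cases~(1), (2), (3) respectively), and the applicable range $1 \leq q_b \leq n - 2$ or $2 \leq q_b \leq n - 1$ confines this value to $\{2, \ldots, n - 1\}$, contradicting the requirement. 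The main obstacle is the sheer volume of case analysis; the upfront compilation of the admissible templates, together with the explicit values of $\lambda(c, v_0)$, $\lambda(c, v_1)$, $\lambda(c, v_n)$, and $\lambda(b, v_n)$ in each template, is what keeps the argument tractable.
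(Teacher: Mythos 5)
Your overall strategy is sound and close in spirit to the paper's own proof: the paper likewise classifies the colour patterns of the outside vertices (via Lemma~\ref{lem:3-cases-n}, the specialisation of Lemma~\ref{lem:3-cases} at the top colours) and then exploits dichromaticity of the triangles $(b,c,v_i)$; the main organisational difference is that the paper first reduces $j=n$ to $j<n$ by locating a second $[1,1,n]$ triangle on $(c,b,v_{n-1})$ and swapping the roles of $b$ and $c$, whereas you treat $j=n$ head on. Your compilation of admissible templates for $b$ and $c$ is correct, and the $j<n$ branch goes through exactly as written.

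There is, however, a genuine error in the last step of your $j=n$ branch. For $b$ in pattern (2) of subcase (IV) you have $\lambda(b,v_0)=q_b-1$ with $2\le q_b\le n-1$, so this value ranges over $\{1,\dots,n-2\}$, not over $\{2,\dots,n-1\}$ as you claim; when $q_b=2$ you get $\lambda(b,v_0)=1\in\{1,n\}$ and no contradiction arises at $v_0$. Concretely, the colouring $\lambda(b,v_0)=\lambda(b,v_2)=1$, $\lambda(b,v_1)=2$, $\lambda(b,v_i)=i$ for $i\ge 3$ satisfies $\lambda(b,v_n)=n$ and slips through your test. The repair is immediate from constraints you have already derived: the triangle $(b,c,v_1)$ forces $\lambda(b,v_1)\in\{1,n\}$, while pattern (2) gives $\lambda(b,v_{q_b-1})=q_b=2\notin\{1,n\}$ since $n>2$. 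So in that subcase the contradiction must be read off at $v_{q_b-1}$ rather than at $v_0$. With that one-line patch the argument is complete.
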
  

\begin{proof}
We have $a = v_j$ for some $j$. First we show that we can assume $j\neq n$.
If $j=n$, then $\lambda(c,v_n) = 1$, and
since $n>2$ only the case (1) of Lemma~\ref{lem:3-cases-n} applies.
It follows that $\lambda(c,v_i) = n$ for all $i<n$. Then
$\lambda(b,v_{n-1})\in\{\lambda(b,c),\lambda(c,v_{n-1})\} = \{1,n\}$.  
But $\lambda(b,v_n) = \lambda(v_n,v_{n-1}) = n$, so 
$\lambda(b,v_{n-1})\neq n$, hence $\lambda(b,v_{n-1}) = 1$.  
Thus we have a $[1,1,n]$ triangle on $(c,b,v_{n-1})$, so switching the roles of
$b$ and $c$, and putting $a = v_{n-1}$ we get
$\lambda(a,b) = n$ with $a\neq v_n$, and $\lambda(a,c) = 1 = \lambda(b,c)$.

Now we can assume $a = v_j$ for some $j\neq n$. Then,
since \(\lambda(b,c) = 1\) and \(\lambda(c,v_n) \in
\{\lambda(c,v_j),\lambda(v_j,v_n)\} = \{1,n\}\), we have \(\lambda(b,v_n) \in
\{1,n\} \). However, \(\lambda(b,v_j) = n = \lambda(v_j,v_n) \), so
\(\lambda(b,v_n) = 1\). 
This excludes cases (2) and (3) of
Lemma~\ref{lem:3-cases-n}, as $n > 2$. The remaining case (1)  
gives $\lambda(b,v_i) = n$
for all $i< n$ and then it follows that $\lambda(c,v_k)
\in\{\lambda(c,v_j),\lambda(v_j,v_k)\}\cap\{\lambda(c,b),\lambda(b,v_k)\} =
\{1,\lambda(v_j,v_k)\}\cap\{1,n\} = \{1\}$, for all
$k\in\{0,\dots,j-1\}\cup\{j+1,n-1\}$ as  $\lambda(v_j,v_k)\neq n$ for all such
$k$. Therefore,  $\lambda(c,v_k) = 1$ for all $k<n$.
In particular, $\lambda(v_0,v_1) = \lambda(c,v_1) = \lambda(c,v_0) = 1$,
producing an inconsistent, monochromatic triangle.
\end{proof}  

\begin{lemma}\label{one-out-1-1}
Let $c\notin G$ and $a,b\in G$. Then, $G\cup\{c\}$ is an inconsistent configuration.
\end{lemma}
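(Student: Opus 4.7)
The plan is to use the structural description of edges in the naturally ordered subgraph $G$ together with Lemma~\ref{lem:3-cases-n} applied to $u = c$. First I will pin down where $a$ and $b$ must sit in $G$. Inside $G$, the label $\lambda(v_i, v_j)$ equals $\max(i,j)$ when both indices are positive, and equals $j$ when $i = 0$; in every case, the larger index appears as the colour. So the equality $\lambda(a,b) = n$ forces $n \in \{i,j\}$ when $a = v_i$ and $b = v_j$. Without loss of generality I may take $b = v_n$ and $a = v_k$ for some $0 \leq k < n$. Then the assumed $[1,1,n]$ triangle gives
\[
\lambda(c,v_k) = 1 \quad \text{and} \quad \lambda(c,v_n) = 1.
\]

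Now I would apply Lemma~\ref{lem:3-cases-n} to the vertex $u = c$. Since $\lambda(c,v_n) = 1 \neq n$, exactly one of the three listed cases holds. I will rule out each one, using throughout the standing assumption $n > 2$.

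In case (1) of Lemma~\ref{lem:3-cases-n}, we would have $\lambda(c,v_i) = n$ for every $i < n$; specialising to $i = k$ contradicts $\lambda(c,v_k) = 1$, because $n > 1$. In case (2), the lemma forces $\lambda(c,v_n) = n-1$, which together with $\lambda(c,v_n) = 1$ yields $n = 2$, contradicting $n > 2$. Case (3) likewise prescribes $\lambda(c,v_n) = n-1$, and the same contradiction ensues. Hence no consistent extension of $G$ by $c$ realises a $[1,1,n]$ triangle with $a,b \in G$, completing the proof.

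The only step requiring any real thought is the opening observation that every edge of $G$ coloured $n$ must be incident to $v_n$; once that is in hand, Lemma~\ref{lem:3-cases-n} does all the work, and the case analysis is essentially mechanical.
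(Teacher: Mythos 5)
Your proof is correct and follows essentially the same route as the paper: identify $b=v_n$ from $\lambda(a,b)=n$, note $\lambda(c,v_n)=1\neq n$ so Lemma~\ref{lem:3-cases-n} applies, observe that cases (2) and (3) would force $\lambda(c,v_n)=n-1\neq 1$ (as $n>2$), and derive the contradiction from case (1) via $\lambda(c,a)=1$. The paper merely compresses the elimination of cases (2) and (3) into the phrase ``the only applicable case is (1)''.
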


\begin{proof}
Let $a = v_i$ for some $i<n$ and $b = v_n$. As
$\lambda(c,v_n) = 1$ and $n>2$, the only applicable case of
Lemma~\ref{lem:3-cases-n} is 
(1). However, it implies $\lambda(c,v_i) = n$ for all $i<n$, contradicting
$\lambda(c,a) = 1$. 
\end{proof}  

\begin{lemma}\label{one-out-1-n}
Let $b\notin G$ and $a,c\in G$. Then, $G\cup\{b\}$ is an inconsistent configuration.
\end{lemma}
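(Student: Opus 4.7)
The plan is to pin down $a$ and $c$ using the hypothesis $\lambda(a,c) = 1$ and then invoke Corollary~\ref{1-forces-n}(3) to derive a contradiction. Since $a,c \in G$, write $a = v_i$ and $c = v_j$ for some indices $i,j \in \{0,\dots,n\}$. Under the naturally ordered identification of $G$, whenever $r \neq s$ we have $\lambda(v_r, v_s) = \max(r,s)$ (this is the content of how $\sqsubset$ was chosen to be the usual order on indices). So $\lambda(a,c) = 1$ forces $\max(i,j) = 1$, that is, $\{i,j\} = \{0,1\}$.

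With $\{a,c\} = \{v_0,v_1\}$ fixed, the other two edges of the triangle $(a,b,c)$ immediately determine the pair $\bigl(\lambda(b,v_0),\lambda(b,v_1)\bigr)$: one of these is $\lambda(b,a) = n$ and the other is $\lambda(b,c) = 1$. In either orientation we obtain
\[
\{\lambda(b,v_0),\lambda(b,v_1)\} = \{1,n\},
\]
and therefore $|\lambda(b,v_0) - \lambda(b,v_1)| = n-1$.

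Now apply Corollary~\ref{1-forces-n}(3) with $u = b \notin G$: it bounds $|\lambda(b,v_0) - \lambda(b,v_1)| \leq 1$. Since we are in the regime $n > 2$, we have $n-1 \geq 2$, which contradicts the corollary. Hence $G \cup \{b\}$ cannot be a consistent configuration, as required.

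There is essentially no obstacle here; the lemma is a very short application of the structural work already carried out in Lemmas~\ref{lem:about-ell}--\ref{lem:3-cases-n} and its corollary. Of the five case-analysis lemmas (\ref{all-out}--\ref{one-out-1-n}), this one has the least room to manoeuvre, since placing \emph{both} $a$ and $c$ inside $G$ together with the hard constraint $\lambda(a,c)=1$ collapses the configuration onto the initial edge $v_0v_1$ of the natural ordering, and the single remaining free vertex $b$ cannot simultaneously realise a jump from colour $1$ to colour $n$ across $v_0$ and $v_1$.
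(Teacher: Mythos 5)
Your proof is correct and follows exactly the same route as the paper's: identify $\{a,c\}=\{v_0,v_1\}$ from $\lambda(a,c)=1$ and the natural ordering of $G$, then note that $\{\lambda(b,v_0),\lambda(b,v_1)\}=\{1,n\}$ with $n>2$ violates Corollary~\ref{1-forces-n}(3). You have simply spelled out the justifications that the paper's two-line proof leaves implicit.
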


\begin{proof}
We must have $\{a,c\} = \{v_0,v_1\}$. But then $\lambda(b,a)=n$ and $\lambda(b,c)=1$ contradicts Corollary \ref{1-forces-n}(3).
\end{proof}  

Lemma \ref{one-out-1-n} completed the last possibility for the triangle
$\{a,b,c\}$, and as each possibility led to inconsistent networks, the proof of the claimed impossibility of qualitative representability is complete.
\begin{theorem}
The algebra $\mathfrak{E}_{n+1}^{\{2\}}$ is not qualitatively representable for any $n>2$.
\end{theorem}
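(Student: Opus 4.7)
The plan is a proof by contradiction that simply assembles the substantial technical work already built up in Lemmas \ref{lem:extend} through \ref{one-out-1-n}. Assume that $\mathfrak{E}_{n+1}^{\{2\}}$ admits a qualitative representation $\lambda$ on some base $B$, with $n > 2$. By Lemma~\ref{lem:extend}, we can enlarge $B$ so that it contains a chromatically saturated vertex $v_0$; then, choosing one representative of each $\sim_{v_0}$ class, we extract the naturally ordered subgraph $G = \{v_0,v_1,\dots,v_n\}$ guaranteed by Lemma~\ref{lem:order}, and relabel colours so that $\sqsubset$ coincides with the usual order on $\{1,\dots,n\}$.

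Next I would observe, exactly as already noted in the passage preceding Lemma~\ref{all-out}, that $G$ itself fails to realise the dichromatic triangle $[1,1,n]$: in $G$, the unique edge of colour~$n$ is $(v_{n-1},v_n)$, and both endpoints see colour~$1$ only at $(v_0,v_1)$, which is not incident to either. But since $2 \in \{2\} = S$, qualitative representability of $\mathfrak{E}_{n+1}^{\{2\}}$ via Lemma~\ref{lem:combinatorial}(2) forces $\lambda^{-1}(1) \circ \lambda^{-1}(1) \cap \lambda^{-1}(n) \neq \varnothing$, so there must exist vertices $a,b,c \in B$ with $\lambda(a,b) = n$ and $\lambda(a,c) = \lambda(b,c) = 1$, with at least one of $a,b,c$ lying outside $G$.

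The heart of the proof is then a case analysis on how many of $a,b,c$ lie outside $G$. Since $a$ and $b$ play symmetric roles (swapping them preserves the hypotheses), the essential cases are: all three outside $G$, two outside and one inside (either $c$ inside, or one of $\{a,b\}$ inside), and exactly one outside (either $c$ outside, or one of $\{a,b\}$ outside). These correspond precisely to Lemmas~\ref{all-out}, \ref{two-out-n}, \ref{two-out-1}, \ref{one-out-1-1}, and \ref{one-out-1-n}, each of which rules out its case by deriving a forbidden monochromatic or dichromatic triangle using the structural constraints from Lemmas~\ref{lem:3-cases} and \ref{lem:3-cases-n} together with Corollary~\ref{1-forces-n}. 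Since the cases are exhaustive and each is impossible, no representation can exist.

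The main conceptual obstacle has already been overcome in the preceding development: building the rigid structure of the naturally ordered subgraph $G$ and controlling how an outside vertex $u$ must attach to it (captured by the trichotomy of Lemma~\ref{lem:3-cases}). Given that machinery, the theorem reduces to a bookkeeping exercise verifying that the case split on $\{a,b,c\} \setminus G$ is exhaustive up to the $a \leftrightarrow b$ symmetry and that every case is handled by a previously proved lemma. The only subtlety worth flagging is the need for $n > 2$: it is used repeatedly in Lemmas~\ref{two-out-n}--\ref{one-out-1-1} to exclude cases~(2) and~(3) of Lemma~\ref{lem:3-cases-n} and to ensure that vertices $v_0,v_1,v_{n-1},v_n$ are all distinct, so I would make this hypothesis explicit at the outset of the proof.
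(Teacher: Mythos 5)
Your proposal is correct and follows essentially the same route as the paper, which likewise derives the theorem by assembling Lemmas~\ref{lem:extend}--\ref{one-out-1-n} into an exhaustive case split on $|\{a,b,c\}\setminus G|$ for a $[1,1,n]$ triangle forced by Lemma~\ref{lem:combinatorial}(2). (One small slip: in $G$ the colour with a unique edge is $1$, on $(v_0,v_1)$, not colour $n$, which labels every edge $(v_i,v_n)$; but the conclusion that $[1,1,n]$ is unrealised in $G$ still holds since a dichromatic $[1,1,n]$ triangle needs two edges of colour $1$.)
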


\section{Algebras $\mathfrak{E}_{n+1}^{\{2,3\}}$}
The so-called \emph{Ramsey algebras}  $\mathfrak{E}_{n+1}^{\{2,3\}}$ (sometimes also known as Monk algebras and as Comer relation algebras) have received particularly ardent attention in the literature, with the obvious close connection to  classical Ramsey-theoretic considerations of avoidance of monochromatic triangles, as discussed in the introduction; \cite{GG55} and \cite{KS68} for example.  Strong representability presents very tightly constrained condition (see Lemma \ref{lem:combinatorial}) and proves too be particularly challenging.
Ramsey algebras appear in both of the texts \cite{HH02} and \cite{Mad06}, with strong representability for $1\leq n\leq 5$  established by Comer \cite{Com83} and later $n=6,7$ by Maddux \cite{Mad11}.  
The limits of strong representability were subsequently  pushed upward via a series of efforts of Kowalski \cite{Kow15}, Alm and Manske \cite{AM15} and finally Alm \cite{Alm17}, which is the current state of knowledge: strong representability is achievable for all $n\leq 2000$ except possibly $n=8,13$.
The cases of $n=8,13$ and $n>2000$ remain tantalisingly unresolved, though the finite field constructions in the above articles have been computationally verified as not providing solutions for $n=8,13$  (see A263308 in the Online Encyclop{\ae}dia of Integer Sequences \cite{almOEIS}).  Qualitative representability presents an interesting intermediate condition, and in this section we show that qualitative representability is possible for all $n$.

We employ a variant of Walecki's construction (see, e.g.,~\cite{Als08})
originally used to partition $K_{2k}$ into $k$ Hamiltonian paths.
Let the vertices of $K_{2n}$ be distributed
evenly on a circle. Colour a zigzagging path
in one colour, as in Figure~\ref{walecki}.
Then rotate the picture one step, change the colour and repeat. Rotating
$n$ times and using $n$ different colours gives the desired colouring of $K_{2n}$.

\begin{figure}
\begin{tikzpicture}
\coordinate (a) at (90:5/2);
\coordinate (b) at (70:5/2);
\coordinate (c) at (50:5/2);
\coordinate (d) at (30:5/2);
\coordinate (e) at (330:5/2);
\coordinate (f) at (310:5/2);
\coordinate (g) at (290:5/2);
\coordinate (h) at (270:5/2);
\coordinate (i) at (250:5/2);
\coordinate (j) at (230:5/2);
\coordinate (k) at (210:5/2);
\coordinate (l) at (150:5/2);
 \coordinate (m) at (130:5/2);
\coordinate (n) at (110:5/2);
\draw[thick] (a) -- (b) -- (n) -- (c) -- (m) -- (d) -- (l) -- (145.184830524717:2.03734350956);
\draw[thick, dotted] (141.4144813757:1.78706434649)--(145.184830524717:2.03734350956);
\draw[thick] (h) -- (i) -- (g) -- (j) -- (f) -- (k) -- (e) -- (325.184830525:2.03734350956);
\draw[thick, dotted]  (321.414481376:1.78706434649)--(325.184830525:2.03734350956);
\draw[thick, fill=white] (a) circle (0.065);
\draw[thick, fill=white] (b) circle (0.065);
\draw[thick, fill=white] (c) circle (0.065);
\draw[thick, fill=white] (d) circle (0.065);
\draw[thick, fill=white] (e) circle (0.065);
\draw[thick, fill=white] (f) circle (0.065);
\draw[thick, fill=white] (g) circle (0.065);
\draw[thick, fill=white] (h) circle (0.065);
\draw[thick, fill=white] (i) circle (0.065);
\draw[thick, fill=white] (j) circle (0.065);
\draw[thick, fill=white] (k) circle (0.065);
\draw[thick, fill=white] (l) circle (0.065);
\draw[thick, fill=white] (m) circle (0.065);
\draw[thick, fill=white] (n) circle (0.065);
\draw[above] (a) node {\(u_1\)};
\draw[right] (b) node {\(u_2\)};
\draw[right] (c) node {\(u_3\)};
\draw[right] (d) node {\(u_4\)};
\draw[right] (e) node {\(u_{n-2}\)};
\draw[right] (f) node {\(u_{n-1}\)};
\draw[right] (g) node {\(u_n\)};
\draw[below] (h) node {\(u_{n+1}\)};
\draw[left] (i) node {\(u_{n+2}\)};
\draw[left] (j) node {\(u_{n+3}\)};
\draw[left] (k) node {\(u_{n+4}\)};
\draw[left] (l) node {\(u_{2n-2}\)};
\draw[left] (m) node {\(u_{2n-1}\)};
\draw[left] (n) node {\(u_{2n}\)};
\end{tikzpicture}
\caption{The Walecki construction.}\label{walecki}
\end{figure}

It is clear that no monochromatic triangles occur, so  
it remains to be shown that all non-monochromatic triangles are realised.
To avoid cluttering subscripts, we will use $1, \dots, n$ as the colours,
instead of $c_1, \dots, c_n$. 
For each $n \in \mathbb{N}$, define a map
\(s_n \colon \mathbb{Z} \to \{1,\dots, n\}\) by
\(x \mapsto R_n(x-1)+1\), where $R_n(k)$ is $k \pmod n$.
In particular, we have $s_n(s_{2n}(a) + s_{2n}(b)) =
s_n(a+b)$ for all $a,b\in \mathbb{Z}$ and $n\in\mathbb{N}$.
We will use this fact repeatedly below to omit $s_{2n}$ within parentheses.

It can be seen from the construction that the colour of $(u_{1},u_{1+s})$ is
$\lceil\frac{s+1}{2}\rceil$, for each $s \in \{1,2,\dots,2n-1\}$.
Rotating $i$ steps clockwise, we get that 
for each \(i \in \{1,\dots,2n\}\) and
$s \in \mathbb{Z}\setminus\{2mn: m\in\mathbb{Z}\}$,
the colour of $(u_{i}, u_{s_{2n}(i+s)})$ is 
$$
s_n\bigg(\bigg\lceil \frac{s+1}{2} \bigg\rceil + i - 1\bigg).
$$
To see this, note that the colour of $(u_{1}, u_{s_{2n}(1+s)})$ is 
$s_n(\lceil\frac{s+1}{2}\rceil)$ and then shift the colour $i-1$ steps forward using the
rotational symmetry and the fact that the colours cycle with period $n$.

\begin{lemma}\label{map-s}
The colouring defined above realises all non-monochromatic triangles.
\end{lemma}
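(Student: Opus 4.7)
The plan is to parametrise each triangle in the colouring by a base vertex $u_i$ and two step-lengths $s,t$ with $s, t, s+t \not\equiv 0 \pmod{2n}$, so that the three vertices are $u_i$, $u_{s_{2n}(i+s)}$, and $u_{s_{2n}(i+s+t)}$. Using the colour formula displayed in the paragraph preceding the lemma, the three edges then receive colours
\[
A = s_n(\alpha(s)+i-1), \quad B = s_n(\alpha(t)+s+i-1), \quad C = s_n(\alpha(s+t)+i-1),
\]
where we abbreviate $\alpha(x) \deq \lceil (x+1)/2 \rceil$. Since incrementing $i$ simultaneously shifts $A$, $B$, and $C$ by one modulo $n$, realising a prescribed target triple of colours reduces to realising a prescribed pair of differences $(\delta_1, \delta_2) \deq (B-A, C-A) \pmod n$.

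A brief case analysis on parities, using $\alpha(2k-1) = k$ and $\alpha(2k) = k+1$, gives in particular that with $s = 2p-1$ odd and $t = 2q$ even one has $(\delta_1, \delta_2) \equiv (p+q,\, q) \pmod n$, whereas with $s = 2p-1$ and $t = 2q-1$ both odd one has $(\delta_1, \delta_2) \equiv (p+q-1,\, q) \pmod n$. Given any $(\delta_1, \delta_2) \not\equiv (0,0) \pmod n$, I would use the first scenario when $\delta_2 \not\equiv 0$, selecting $q \in \{1,\dots,n-1\}$ with $q \equiv \delta_2$ and any $p$ with $p \equiv \delta_1 - q \pmod n$; then $s$ is odd, $t$ is even and nonzero modulo $2n$, and $s+t$ is odd, so all admissibility conditions hold. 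When $\delta_2 \equiv 0$ (so $\delta_1 \not\equiv 0$), the second scenario with $q = n$ (giving $t = 2n-1$) and $p$ satisfying $p \equiv \delta_1 + 1 \pmod n$ works: $s = 2p-1$ and $t = 2n-1$ are both odd, and $s+t = 2(p+n-1)$ is nonzero modulo $2n$ precisely because $\delta_1 \not\equiv 0 \pmod n$.

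The most delicate part will be confirming the three admissibility conditions $s, t, s+t \not\equiv 0 \pmod{2n}$ in all sub-cases, since a failure there would mean we have not actually specified three distinct vertices; the redundancy offered by the two parity scenarios is exactly what handles the boundary case $\delta_2 \equiv 0$. A monochromatic triangle corresponds precisely to the excluded pair $(\delta_1, \delta_2) = (0,0)$, so the above construction realises every non-monochromatic colour triple as required.
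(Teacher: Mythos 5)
Your proposal is correct and follows essentially the same route as the paper's proof: both rest on the explicit colour formula, choose the two step-lengths with prescribed parities, and split into two cases to cover the degenerate target (your $\delta_2\equiv 0$ case playing the role of the paper's $k=i$ versus $k=j$ dichotomy). Your normalisation by rotational symmetry, reducing the problem to realising the difference pair $(\delta_1,\delta_2)$, is a repackaging of the paper's explicit choice of base vertex $\ell = s_{2n}(i+j-k)$, and all of your parity computations and admissibility checks are accurate.
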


\begin{proof}
Take \(i,j,k \in \{1,\dots,n\}\) with \(i < j\).
We will show that there is a triangle coloured $(i,j,k)$.
We consider two cases: $k\neq i$ and $k\neq j$. Assume first that
$k\neq i$. Put
\[
\ell \deq s_{2n}(i + j - k), \quad s \deq 2k - 2j + 1, \quad \textrm{and} \quad
t \deq 2k - 2i. 
\]
Note that $s,t\in \mathbb{Z}\setminus\{2mn: m\in\mathbb{Z}\}$.
Hence the colour of $(u_\ell, u_{s_{2n}(\ell + s)})$ is
\[
s_n\bigg(\bigg\lceil \frac{(2k-2j+1)+1}{2} \bigg\rceil +( i + j - k) - 1\bigg) =
s_n(k-j+1 + i + j - k -1) =  i. 
\]
The colour of $(u_\ell, u_{s_{2n}(\ell + t)})$ is
\[
s_n\bigg(\bigg\lceil \frac{(2k-2i)+1}{2} \bigg\rceil + (i + j - k) - 1\bigg) =
s_n(k-i+1 + i + j - k -1) =  j. 
\]
As \(i < j\), we have \(s < t\). Now,
\[
t - s = (2k - 2i) - (2k - 2j + 1) = 2(j-i) -1,
\]
which is not a multiple of $2n$. Hence the colour of $(u_{s_{2n}(\ell+s)},
u_{s_{2n}(\ell+t)})$ is 
\begin{align*}
s_n\bigg(\bigg\lceil \frac{(2(j-i)-1)+1}{2} \bigg\rceil + (i + j - k) + (2k -2j
  + 1) - 1\bigg) &=   \\ 
s_n(j-i +i + j -k + 2k  -2j + 1  -1) &= k.
\end{align*}
So, $(u_{\ell},u_{s_{2n}(\ell+s)},u_{s_{2n}(\ell+ t)})$ is a triangle with edges
coloured by \(i\), \(j\), and \(k\).
Next, assume $k\neq j$ and put
\[
\ell \deq s_{2n}(i + j - k), \quad s \deq 2k - 2j, \quad \textrm{and} \quad
t \deq 2k - 2i +1.
\]
Again, $s,t\in \mathbb{Z}\setminus\{2mn: m\in\mathbb{Z}\}$.
By calculations similar to the first case, we get that the colour of
$(u_\ell, u_{s_{2n}(\ell + s)})$ is $i$, and the colour of
$(u_\ell, u_{s_{2n}(\ell + t)})$ is $j$. Moreover,
\[
t - s = (2k - 2i +1) - (2k - 2j) = 2(j-i) +1,
\]
so the colour of $(u_{s_{2n}(\ell+s)}, u_{s_{2n}(\ell+t)})$ is
\begin{align*}
  s_n\bigg(\bigg\lceil \frac{(2(j-i)+1)+1}{2} \bigg\rceil +
  (i + j  - k) + (2k -2j) - 1\bigg) &=   \\
s_n(j-i+1 + i +j -k +2k -2j -1) &= k.
\end{align*}

Choosing between the two cases, as needed for
(i) \(k = i, k\neq j\), (ii) \(k \neq i, k = j\), or (iii) \(k\neq i,j\) we
can construct all non-monochromatic triangles. 
\end{proof}

\begin{theorem}
For any natural number $n$ the algebra $\mathfrak{E}_{n+1}^{\{2,3\}}$ is qualitatively
representable on the graph $K_{2n} $.
\end{theorem}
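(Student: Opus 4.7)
The plan is to verify the two directions of the qualitative representability criterion given by Lemma \ref{lem:combinatorial}(2), applied with $S=\{2,3\}$ and thus $F=\{1\}$. Concretely, we need to show that the edge colouring $\lambda$ of $K_{2n}$ produced by the Walecki construction satisfies, for every $a,b,c\in C$,
\[
|\{a,b,c\}|=1 \;\Longleftrightarrow\; \lambda^{-1}(a)\circ\lambda^{-1}(b)\cap\lambda^{-1}(c)=\varnothing.
\]
That is, monochromatic triangles are the \emph{only} forbidden triangles, and all dichromatic and trichromatic triangles actually occur in the network.

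First I would handle the forbidden direction ($\Rightarrow$). The Walecki construction defines the edges of each colour class as a single zigzagging Hamiltonian path (or equivalently, a perfect matching-union if read differently), obtained from one fixed path by rotation. In particular, each colour class, being a path, is triangle-free. Hence no monochromatic triangle can occur in $K_{2n}$ under $\lambda$, which is exactly the statement that for $a=b=c$ we have $\lambda^{-1}(a)\circ\lambda^{-1}(b)\cap\lambda^{-1}(c)=\varnothing$.

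For the reverse direction ($\Leftarrow$), I would simply invoke Lemma~\ref{map-s}, which provides, for every ordered triple $(i,j,k)$ of colours with $|\{i,j,k\}|\in\{2,3\}$, explicit vertices $u_\ell, u_{s_{2n}(\ell+s)}, u_{s_{2n}(\ell+t)}$ whose three incident edges realise the colour combination $(i,j,k)$. Combined with the surjectivity of $\lambda$ onto $C$ (evident from the construction, since each of the $n$ rotations produces a distinct colour), this shows that for any $a,b,c\in C$ with $|\{a,b,c\}|\neq 1$ the composition $\lambda^{-1}(a)\circ\lambda^{-1}(b)$ meets $\lambda^{-1}(c)$.

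No genuine obstacle remains: the hard combinatorial work has been absorbed into Lemma~\ref{map-s}, and the forbidden direction is immediate from the path structure of each colour class. The proof is therefore just a one-line invocation of Lemma~\ref{map-s} together with the observation that Walecki's paths contain no triangles, packaged via the dictionary provided by Lemma~\ref{lem:combinatorial}(2).
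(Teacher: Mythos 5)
Your proposal is correct and follows essentially the same route as the paper: the paper likewise notes that no monochromatic triangles occur (each colour class being a Hamiltonian path in the Walecki decomposition), invokes Lemma~\ref{map-s} for the realisation of all non-monochromatic triangles, and concludes via the dictionary of Lemma~\ref{lem:combinatorial}(2). Your write-up merely makes explicit the steps the paper compresses into ``Immediate by Lemma~\ref{map-s}.''
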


\begin{proof}
Immediate by Lemma~\ref{map-s}.
\end{proof}

\section{Algebras $\mathfrak{E}_{n+1}^{\{1,3\}}$} 

These algebras are better known as \emph{Lyndon algebras}.  Lyndon \cite{LR61} showed
that for $n\geq 4$, strong representations of the algebra $\mathfrak{E}_{n+1}^{\{1,3\}}$ precisely correspond to affine planes of order $n-1$, so that by the Bruck-Ryser Theorem \cite{BR49} and standard field constructions, there are infinitely many $n$ for which $\mathfrak{E}_{n+1}^{\{1,3\}}$ is strongly representable, and infinitely many for which it is not strongly representable.  This was used by Monk to show that the strongly representable relation algebras admit no finite axiomatisation \cite{MJ64}.  We now explore how qualitative and even feeble representations of $\mathfrak{E}_{n+1}^{\{1,3\}}$ relate similarly to geometric objects.  The natural starting point will be feeble representations, which we find corresponds to parallelisms of linear spaces, in the sense of finite geometry.  We direct the reader to a text on linear spaces such as \cite{batbeu} for further background on linear spaces, though the connection with feeble representations of Lyndon algebras is presented here for the first time.

A \emph{linear space} is a system $(G,\mathscr{L})$, where $G$ is a set of points, and $\mathscr{L}$ a
family of subsets of $G$ called \emph{lines} satisfying the following axioms:
\begin{enumerate}
\item[(LS1)] Every pair of distinct points is contained in a line;
\item[(LS2)] Every pair of distinct lines have intersection that is either empty or a singleton;
\item[(LS3)] Every line contains at least two points.
\end{enumerate}
An obvious consequence of LS1 and LS2 is that each pair of points $p$, $q$ are
contained within a unique line, which  we denote by $\overline{pq}$.  As usual,
points lying on the same line are said to be \emph{collinear}, two lines are
\emph{incident} if they intersect to a common point. 

A \emph{parallelism} of a linear space $(G,\mathscr{L})$ is an equivalence relation $\varpi$ on $\mathscr{L}$ with the property that lines within a common block do not intersect.  The blocks of~$\varpi$ are referred to as \emph{parallel classes} and lines within a block are \emph{parallel}.  The identity relation on $\mathscr{L}$ is always an example of a parallelism, and we refer to this as the \emph{trivial parallelism}.  But in general there may be many other parallelisms, with various numbers of parallel classes.  The next theorem shows that $n$-block parallelisms of linear spaces precisely capture feeble representations of the algebra $\mathfrak{E}_{n+1}^{\{1,3\}}$, modulo the choice of bijection between diversity atoms and parallel classes.
\begin{theorem}\label{thm:feebleLyndon}
Every linear space $(G,\mathscr{L})$ with $n$-block parallelism $\varpi$ yields a feeble representation of $\mathfrak{E}_{n+1}^{\{1,3\}}$ by relating $p,q\in G$ by the $i^{\rm th}$ colour whenever $p$ is collinear with $q$ by way of a line in the $i^{\rm th}$ block of $\varpi$.  Moreover, every feeble representation $\mathfrak{E}_{n+1}^{\{1,3\}}$ arises in this way.
\end{theorem}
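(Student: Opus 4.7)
My plan is to prove the two directions of the correspondence separately. The forward direction is a construction and verification; the reverse direction analyses a given feeble representation and extracts the linear space structure from it.

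For the forward direction, given a linear space $(G,\mathscr{L})$ and an $n$-block parallelism $\varpi = \{\varpi_1,\dots,\varpi_n\}$, I would define an edge colouring $\lambda\colon G\times G\to \{1', c_1, \dots, c_n\}$ by $\lambda(p,p) = 1'$ and, for $p\neq q$, $\lambda(p,q) = c_i$ where $\overline{pq}\in\varpi_i$. Well-definedness and symmetry follow from LS1 and LS2; surjectivity onto the proper colours uses that each block $\varpi_i$ is nonempty and that each line contains at least two points by LS3. It remains to verify the no-dichromatic-triangle condition of Lemma~\ref{lem:combinatorial}(3) (with $F = \{2\}$). If $\lambda(p,q) = \lambda(p,r) = c_i$ with $p,q,r$ distinct, then $\overline{pq}$ and $\overline{pr}$ both lie in $\varpi_i$ and share the point $p$; the parallelism property forces $\overline{pq} = \overline{pr}$, so $q$ and $r$ are collinear with $p$ on this line and $\lambda(q,r) = c_i$ as well. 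Hence any triangle with two edges of the same colour is actually monochromatic, never dichromatic.

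For the reverse direction, suppose $\lambda$ is a feeble representation of $\mathfrak{E}_{n+1}^{\{1,3\}}$ on a set $G$. For each proper colour $c_i$, define a relation $\sim_i$ on $G$ by $p\sim_i q$ iff $p=q$ or $\lambda(p,q)=c_i$. Reflexivity and symmetry are immediate, and the crucial transitivity step is exactly the absence of dichromatic triangles: if $p, q, r$ are distinct with $\lambda(p,q) = \lambda(q,r) = c_i$, then $\lambda(p,r)$ cannot be some distinct proper colour $c_j$ without creating a forbidden dichromatic triangle, so $\lambda(p,r) = c_i$. I would then take $\mathscr{L}$ to be the collection of all $\sim_i$-classes of size at least two, across all $i$, and declare two lines parallel iff they are both classes of the same $\sim_i$. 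LS1 and LS3 are immediate. For LS2, lines of the same colour are distinct $\sim_i$-classes, hence disjoint, while two lines of different colours $c_i \neq c_j$ cannot share two points $p,q$ because $\lambda(p,q)$ cannot simultaneously equal $c_i$ and $c_j$. Since $\lambda$ is surjective, every one of the $n$ parallel classes is nonempty, yielding an $n$-block parallelism. It is then a direct check that starting with this linear space and parallelism and running the forward construction recovers $\lambda$ up to the identification of $c_i$ with the $i^{\rm th}$ block of $\varpi$.

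The main obstacle is the transitivity verification for $\sim_i$ in the reverse direction: this is where the combinatorial content of the forbidden-dichromatic-triangle condition is absorbed in full. Once the equivalence relation structure is established, the remaining verifications of LS1--LS3, of the parallelism axiom, and of the count of $n$ blocks are all routine bookkeeping, and the correspondence emerges naturally modulo relabelling of diversity atoms by parallel classes.
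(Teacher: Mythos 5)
Your proof is correct and follows essentially the same route as the paper's: colour pairs by the parallel class of their joining line in the forward direction, and in the reverse direction take the lines to be the maximal monochromatic classes (the paper phrases these as maximal cliques with respect to each $a^\theta$, which coincide with your $\sim_i$-classes precisely because the no-dichromatic-triangle condition makes each colour's graph a disjoint union of cliques). Your write-up is in fact slightly more explicit than the paper's in verifying the absence of dichromatic triangles in the forward direction and the transitivity of $\sim_i$ in the reverse direction.
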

\begin{proof}
For the forward direction, we consider the $n$-block parallelism $\varpi$ of $(G,\mathscr{L})$ and the defined mapping of diversity atoms of $\mathfrak{E}_{n+1}^{\{1,3\}}$ to parallel classes as given.  For convenience, we allow the parallel classes to share the same name as the colour of the atom to which they are matched.  So the (attempted) feeble representation~$a^\theta$ of a diversity atom $a$ labels the edge from $p$ to $q$ if $p$ is collinear to $q$ by way of a line in $a$ (the identity atom is represented, as required, by the identity relation).  By Axioms LS1 and LS2, all pairs receive a unique colour.  By LS3, all colours appear, so that $\theta$ is a feeble representation of $\mathfrak{E}_{n+1}^{\{1,3\}}$.

For the converse direction, let $\theta$ be a feeble representation of $\mathfrak{E}_{n+1}^{\{1,3\}}$ on a set $G$.  To define the lines $\mathscr{L}$, for each diversity atom $a$ we will include maximal cliques with respect to edges $a^\theta$ as lines in $\mathscr{L}$.  To define the parallelism $\varpi$, we define lines to be parallel if they are cliques with respect to a common atom.  Because each pair of points in $G$ are related by $a^\theta$ for some atom $a$, we have that Axiom LS1 holds.  Because no edge is coloured by more than one atom, we have that Axiom LS2 holds.  It is trivial that Axiom LS3 holds.  Because $\theta$ is a feeble representation, each atom labels at least one edge, so that the number of  blocks in $\varpi$ is exactly~$n$.  Because there are no dichromatic triangles in the feeble representation $\theta$ we have that the lines within common blocks of $\varpi$ do not intersect, so that $\varpi$ is a parallelism of $(G,\mathscr{L})$.  Finally, it is clear that the representation of $\mathfrak{E}_{n+1}^{\{1,3\}}$ over $(G,\mathscr{L})$, $\varpi$ agrees with the one defined in the first half of the proof, with the obvious matching of atoms to parallel classes.
\end{proof}

We mention in passing that an alternative presentation here is to begin with a linear space $(G,\mathscr{L})$ in which there is a distinguished line $L_\infty$ (the line at infinity) that is incident with all other lines and contains precisely $n$ points.  Provided that every line other than $L_\infty$ contains at least $3$ points, we may obtain a linear space $(G\backslash L_\infty,\mathscr{L}\backslash\{L_\infty\})$ and define an $n$-block parallelism by letting two lines be parallel if they were incident in $(G,\mathscr{L})$ to a common point on $L_\infty$.  Conversely, any parallelism $\varpi$ of a linear space gives rise to a new linear space by letting the blocks of $\varpi$ be treated as additional new points, and the set of blocks of $\varpi$ be considered as an additional new line (``at infinity'').  This of course is a general case of the familiar interplay between projective planes and affine planes.

Since $3\in \{1,3\}$, applying Observation~\ref{obs:3-feeble} immediately gives
the next result.
\begin{obs}
Feeble representations of Lyndon algebras, and hence the corresponding linear spaces,
exist for all $n\geq 3$.
\end{obs}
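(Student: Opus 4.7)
The plan is essentially a one-line chain of citations, since the hint in the excerpt already signals the shape of the argument. First I would note that the Lyndon algebras are precisely $\mathfrak{E}_{n+1}^{\{1,3\}}$, so the index set $S = \{1,3\}$ contains $3$. Observation~\ref{obs:3-feeble} then applies verbatim and yields that $\mathfrak{E}_{n+1}^{\{1,3\}}$ is feebly representable for every $n \geq 3$. Tracing the dependencies one step further, Observation~\ref{obs:3-feeble} itself relied on Lemma~\ref{lem:feeble-expansion} (since $\{3\} \subseteq \{1,3\}$) together with the construction in Section~\ref{sec:quasigr} that builds a feeble representation of $\mathfrak{E}_{n+1}^{\{3\}}$ either via a commutative idempotent quasigroup (odd $n$) or by recolouring one edge in such a representation to handle even $n$. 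So the underlying colourings themselves come essentially for free from the quasigroup constructions already in hand.

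The second half of the claim, about the corresponding linear spaces, then follows immediately from Theorem~\ref{thm:feebleLyndon}. That theorem provides a bijective correspondence (up to labelling of parallel classes) between feeble representations of $\mathfrak{E}_{n+1}^{\{1,3\}}$ and pairs $((G,\mathscr{L}),\varpi)$ where $\varpi$ is an $n$-block parallelism on the linear space $(G,\mathscr{L})$. Applying the converse direction of the theorem to the feeble representation supplied by Observation~\ref{obs:3-feeble} produces, for each $n \geq 3$, a linear space carrying an $n$-block parallelism.

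I do not anticipate any obstacle: the combinatorial content is already packaged in the earlier results, and the observation is really a statement about what the framework delivers rather than a fresh construction. The only thing worth underlining in the writeup is that the observation plays a conceptual role, confirming that the geometric side of the correspondence established in Theorem~\ref{thm:feebleLyndon} is non-vacuous for every admissible parameter $n$, in contrast to the more restrictive situation for strong representability governed by Bruck--Ryser.
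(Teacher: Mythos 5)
Your proposal is correct and matches the paper's own argument exactly: the paper likewise notes that $3\in\{1,3\}$, invokes Observation~\ref{obs:3-feeble} to get feeble representability of $\mathfrak{E}_{n+1}^{\{1,3\}}$ for all $n\geq 3$, and obtains the corresponding linear spaces via Theorem~\ref{thm:feebleLyndon}. The extra dependency-tracing you include is accurate but not needed beyond the citation chain.
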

A simple application of Theorem \ref{thm:feebleLyndon} yields another solution.
The \emph{near pencil} is a linear space on $n\geq 3$ points
$p_0,p_1,\dots,p_{n-1}$, where the lines are
$\{p_1,\dots,p_{n-1}\}$ along with $\{p_0,p_i\}$ for each $i=1,\dots,n-1$.  This near pencil has $n$ lines, and with the trivial parallelism yields a further easy feeble representation of $\mathfrak{E}_{n+1}^{\{1,3\}}$.

We now consider qualitative representations; each triangle type has a natural geometric interpretation, the first that there are large enough lines, the second that there are enough points in general position.
\begin{theorem}\label{thm:qualLyndon}
A linear space $(G,\mathscr{L})$ with $n$-block parallelism $\varpi$ represents all monochromatic triangles in $\mathfrak{E}_{n+1}^{\{1,3\}}$ precisely when \textup{(LS4)} holds and all trichromatic triangles precisely when \textup{(LS5)} holds\textup:
\begin{enumerate}
\item[LS4] each parallel class contains a line with at least $3$ points.
\item[LS5] each triple of parallel classes are witnessed by $3$ points in general position.  That is, for parallel classes $d_1,d_2,d_3$ there are points $p_1,p_2,p_3$ in general position, with $\overline{p_1p_2}\in d_1$, $\overline{p_2p_3}\in d_1$, $\overline{p_3p_1}\in d_3$.
\end{enumerate}
\end{theorem}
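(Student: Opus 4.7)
The plan is to translate each triangle-type condition directly through the correspondence established in Theorem~\ref{thm:feebleLyndon}. The two equivalences are independent and can be handled separately, and in each case the argument is really a bookkeeping exercise once the geometric content of the parallelism is unpacked.

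For the monochromatic case I would argue as follows. A monochromatic triangle of colour $a$ in the representation consists of three distinct points $p_1,p_2,p_3$ with each pair joined by a line in the parallel class $a$. The key step is to invoke the defining property of a parallelism: two lines in the same parallel class are either equal or disjoint. Since $\overline{p_1p_2}$ and $\overline{p_2p_3}$ both lie in class $a$ and share the point $p_2$, they must coincide; similarly $\overline{p_1p_3}$ must equal this common line. Thus there is a single line in class $a$ containing three points, which is precisely (LS4) for that class. The converse is immediate: a line in class $a$ with at least three points gives three points pairwise related by colour $a$, hence a monochromatic triangle of colour $a$. Applying this to every class $a$ yields the desired equivalence.

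For the trichromatic case, a trichromatic triangle with pairwise distinct colours $a,b,c$ corresponds under Theorem~\ref{thm:feebleLyndon} to three points $p_1,p_2,p_3$ whose connecting lines $\overline{p_1p_2}, \overline{p_2p_3}, \overline{p_3p_1}$ lie in the parallel classes $a,b,c$ respectively. Because each line belongs to exactly one parallel class, distinctness of the classes forces pairwise distinctness of the three lines, which in turn is equivalent to $p_1,p_2,p_3$ being non-collinear, i.e., in general position. Ranging over all triples of parallel classes yields the equivalence with (LS5) on the nose.

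I do not anticipate a serious obstacle: the correspondence of Theorem~\ref{thm:feebleLyndon} does all the heavy lifting, and the only substantive geometric fact used is the disjointness of distinct parallel lines, invoked to collapse the three edges of a monochromatic triangle into a single line. The only small point to be careful about is the bidirectional reading of ``general position'' in (LS5), but as noted above this is equivalent to non-collinearity and matches the trichromatic condition exactly.
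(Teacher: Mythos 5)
Your proof is correct and follows the same route as the paper, which simply declares the theorem immediate from the correspondence of Theorem~\ref{thm:feebleLyndon}; you have merely filled in the details the paper elides (collapsing the edges of a monochromatic triangle onto a single line via disjointness of distinct parallels, and matching trichromatic triangles to non-collinear witnesses). No gaps.
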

\begin{proof}
This is immediate, as it simply states the condition on representations in the geometric formulation shown equivalent by Theorem \ref{thm:feebleLyndon}.  
\end{proof}
We will refer to a linear space with parallelism $(G,\mathscr{L},\varpi)$ as an
\emph{affine Lyndon geometry} if it satisfies the extra axioms LS4 and LS5.  The
linear space obtained from an affine Lyndon geometry by adjoining the line at
infinity and associated directions, will be called a \emph{Lyndon geometry};
note that a Lyndon geometry is a linear space with a distinguished line.  As
with affine planes and projective  planes, these two concepts in some sense
differ only in that parallel classes and the parallelism are given the name
``points at infinity'' and ``line at infinity''.  In order to match the case for affine planes, we let the \emph{order} of affine Lyndon geometry $(G,\mathscr{L},\varpi)$ be the number $n$ such that $\varpi$ has $n+1$ blocks. 

We now show that affine Lyndon geometries of every order exist.



\begin{theorem}\label{thm:drop}
Let $(G,\mathscr{L})$ be an affine plane of order $n\geq 3$ and consider any set $D$
of $k\leq n-2$ points in $G$.  
Define a parallelism $\varpi$ on the subspace on
$G\setminus D$ by defining two lines parallel if\textup{:} in $(G,\mathscr{L})$
they pass through a unique point of $D$\textup{;} and otherwise if they were
parallel in $(G,\mathscr{L})$.  The parallelism $\varpi$ has $n+k+1$ blocks. 
\end{theorem}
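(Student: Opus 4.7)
The plan is to identify the $n+k+1$ blocks of $\varpi$ explicitly, since the count decomposes naturally as $(n+1)+k$. The blocks should consist of the $n+1$ original parallel classes of the affine plane (suitably pruned) together with $k$ new classes, one for each point of $D$. Concretely, for each $d\in D$, I would let $B_d$ consist of all lines of the subspace whose original line in $(G,\mathscr{L})$ passes through $d$; and for each of the $n+1$ original directions $\pi$, I would let $\Pi_\pi$ consist of the subspace lines whose original line is in direction $\pi$ and is not placed in some $B_d$.

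I would then verify that this candidate partition yields a parallelism in the sense of the paper. Reflexivity and symmetry are trivial; the substantive content is that two lines in a common block are disjoint in $G\setminus D$. For $B_d$, two distinct original lines through $d$ meet only at $d$ in $(G,\mathscr{L})$, since distinct affine-plane lines intersect at most once, and the point $d$ is removed in passing to the subspace. For $\Pi_\pi$, the constituent lines are pairwise parallel in the original affine plane and hence do not meet anywhere.

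Next I would reconcile the explicit partition with the relational definition in the theorem: two lines placed in a common $B_d$ do share the single intersection point $d\in D$ in $(G,\mathscr{L})$, while two lines in a common $\Pi_\pi$ were parallel in $(G,\mathscr{L})$. Counting then yields $k$ blocks of type $B_d$ plus $n+1$ blocks of type $\Pi_\pi$, for a total of $n+k+1$. The bound $k\leq n-2$ enters to ensure non-emptiness of every block: each $\Pi_\pi$ arises from an original parallel class of $n$ lines after removing at most $k$ lines (at most one per $d\in D$), leaving at least $n-k\geq 2$ behind; and each $B_d$ starts with the $n+1$ lines through $d$ and loses at most $k-1$ to tie-breaking, retaining at least $n-k+2\geq 4$.

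The main obstacle I anticipate is the treatment of lines that pass through more than one point of $D$, such as $\overline{d_id_j}$ for distinct $d_i,d_j\in D$. Such a line is a candidate for membership in several $B_d$'s at once, so one must pick a tie-breaking rule compatible with the relational definition in the theorem, that is, a rule that does not force unintended mergers between candidate blocks. Ensuring that the stated definition picks out exactly the partition described, and in particular that the count remains $n+k+1$ rather than collapsing further, is the delicate core of the argument.
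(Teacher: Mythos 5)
Your block decomposition, the disjointness checks within each block, and the non-emptiness counts are exactly what the paper's proof does, so the skeleton is right. But the point you defer --- the ``delicate core'' of handling lines through two or more points of $D$ --- is a genuine gap, and framing it as a tie-breaking problem among the pencils $B_d$ points the wrong way. No tie-breaking is intended: ``pass through a unique point of $D$'' is to be read as ``pass through precisely one point of $D$,'' so a line meeting $D$ in two or more points falls into the ``otherwise'' clause and remains in its original parallel class. With this reading the relation is given by an explicit partition (the $k$ pencils of lines whose intersection with $D$ is a singleton, plus the $n+1$ pruned original classes), so it is automatically an equivalence relation and no mergers can occur: a line through two or more points of $D$ is never declared parallel to a line through exactly one, and two such lines lying in the same old class are disjoint because they were already parallel in $(G,\mathscr{L})$. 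Your counts then go through verbatim: each old class keeps at least $n-k\geq 2$ lines, and each pencil contains at least $n+1-(k-1)\geq 4$ lines meeting $D$ only at its base point, giving exactly $n+k+1$ nonempty blocks. Assigning $\overline{d_1d_2}$ to one of $B_{d_1}$, $B_{d_2}$ instead would yield a different (also valid) parallelism, but not the one the theorem defines.

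A further remark on scope: the paper's proof of this theorem establishes substantially more than the literal statement. It verifies (LS3) for the punctured space (each line retains at least $n-k\geq 2$ points) and, at much greater length, the axioms (LS4) and (LS5) of Theorem~\ref{thm:qualLyndon}, since Corollary~\ref{cor:notstrLyndon} needs the construction to produce an affine Lyndon geometry, not merely a parallelism with the right number of blocks. Your proposal does not touch (LS4) or (LS5); that is defensible against the statement as written, but the real work in the paper's argument is the four-case general-position analysis for (LS5).
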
 
\begin{proof}
Let $\mathscr{L}_D$ denote the lines in the defined subspace: so $K\in
\mathscr{L}_D$ is of the form $L\setminus D$ for a unique $L\in \mathscr{L}$. 
For $L\in\mathscr{L}$ we will write $L^-$ to denote $L\setminus D$.
We will refer to blocks of $\varpi$ corresponding to a parallel class in
$(G,\mathscr{L})$, as \emph{old} directions, and those consisting of the pencil of
lines that pass through precisely one single point $p\in D$ as \emph{new}
directions. Since each line in $\mathscr{L}$ contains $n$ points, each
line in $\mathscr{L}_D$ contains at least $2$ points, verifying the axiom (LS3). 
The axioms (LS1) and (LS2) are inherited from $(G,\mathscr{L})$,
so $(G\setminus D,\mathscr{L}_D)$ is a linear space. 

To verify axiom (LS4) note that as 
there are $n$ lines in each direction of the affine plane
$(G,\mathscr{L})$, and at most $n-2$ of these lines are through unique points in
$D$, there are at least $2$ lines in each old direction. Let $m$ be the
number of lines in an old direction $o$. These lines contain  
at least $mn-(n-2)$ points, and as $n\geq 3$ and $m\geq 2$
we have $mn-n+2 > 2m$, showing that at least one of the $m$ lines must
contain strictly more than 2 points. 
Analogously, there
are $n+1$ lines through each point $p$ of $D$, and at most $k-1\leq n-3$ of them
can pass through a second point of $D$, so each new direction contains at least
$4$ lines. As above, it follows that at least one
of these lines contains at least $3$ points. It is now also clear that the
parallelism $\varpi$ has $n+k+1$ blocks.

For axiom (LS5), consider three
directions from $\varpi$.  If all three are old, say, $o_1$, $o_2$ and $o_3$,
then as there are $n$ lines of $\mathscr{L}$ in $o_1$,
there is a line $L_1\in \mathscr{L}\cap o_1$, with $L_1\cap D=\varnothing$,
so $L_1\in\mathscr{L}_D\cap o_1$, that is, $L_1^- = L_1$. Similarly, there is a line
$L_2\in \mathscr{L}\cap o_2$ with $L_2^- = L_2$. 
Let $p$ be the unique point in $L_1\cap L_2$. As there are $n$ lines
in $\mathscr{L}\cap o_3$, and $|D\cup\{p\}|= k+1 \leq n-1$, there is
a line $L_3\in \mathscr{L}\cap o_3$ with
$L_3\cap (D\cup\{p\})=\varnothing$, so that $L_3^- = L_3$.
Since $L_1$, $L_2$ and $L_3$
are lines from $\mathscr{L}$, not all through the same point, they intersect pairwise at
3 distinct points. Hence, $p\in L_1\cap L_2$, $q\in L_1\cap L_3$ and $r\in
L_2\cap L_3$ are 3 points in general position in $G\setminus D$.
But since $L_1=L_1^-$, $L_2=L_2^-$ and $L_3=L_3^-$ they witness (LS5)
for $o_1$, $o_2$ and $o_3$ in $(G\setminus D, \mathscr{L}_D)$.

Next, let $o_1$ and $o_2$ be old directions and let $d_3$ be a new direction.
Reasoning as before, we find lines $L_1\in\mathscr{L}\cap o_1$
and $L_2\in\mathscr{L}\cap o_2$ such that $L_1^- = L_1$, $L_2^- = L_2$, 
and $L_1\cap L_2 = \{p\}$ with $p\in G\setminus D$.  
As there are $n+1$ lines in $\mathscr{L}$ passing through~$d_3$,
there is a line $L_3\in \mathscr{L}$ passing through~$d_3$, with
$L_3\cap ((D\cup\{p\})\setminus\{d_3\})=\varnothing$.
Then, $p\in L_1\cap L_2$, $q\in L_1\cap L_3$ and $r\in L_2\cap L_3$ are 
3 points in general position in $G\setminus D$.
Moreover, $L_3^- = L_3\setminus\{d_3\}$, so
$L_1^-$, $L_2^-$ and $L_3^-$ witness (LS5) for $o_1$, $o_2$ and $d_3$
in $(G\setminus D, \mathscr{L}_D)$.

Further, let $o_1$ be an old direction and let $d_2$ and $d_3$ be new
directions. As there are $n+1$ lines in $\mathscr{L}$ passing through $d_2$,
there are $n$ lines in $\mathscr{L}\setminus o_1$
passing through~$d_2$, so we find a line $L_2\in \mathscr{L}\setminus o_1$ such that 
$d_2\in L_2$ and $L_2\cap (D\setminus\{d_2\}) =\varnothing$.
Let $o_2$ be the direction of $L_2$ in $(G,\mathscr{L})$.
As there there are $n-1$ lines in $\mathscr{L}\setminus (o_1\cup o_2)$
passing through $d_3$, we find $L_3\in\mathscr{L}\setminus (o_1\cup o_2)$
with $d_3\in L_3$ such that $L_3\cap (D\setminus\{d_3,\}) =\varnothing$.
Then $L_2$ and $L_3$ intersect at a point $r\in G\setminus D$. Now
consider $o_1$. There are $n$ lines of $\mathscr{L}$ in $o_1$,
at most $k+1 = n-1$ of them having nonempty intersections with
$D\cup \{r\}$, so there is a line $L_1\in o_1$ such that
$L_1\cap(D\cup \{r\}) = \varnothing$. Then, $p\in L_1\cap L_2$, $q\in L_1\cap
L_3$ and $r\in L_2\cap L_3$ are 3 points in general position in $G\setminus D$.
So the lines
$L_1^- = L_1$, $L_2^- = L_2\setminus \{d_2\}$ and $L_3^- = L_3\setminus\{d_3\}$,
witness (LS5) for $o_1$, $d_2$ and $d_3$ in $(G\setminus D, \mathscr{L}_D)$. 

Finally, let $d_1$, $d_2$ and $d_3$ be new directions. We find 
$L_2\in \mathscr{L}$ such that $d_2\in L_2$ and $L_2\cap
(D\setminus\{d_2\}) =\varnothing$. Let $o_2$ be the direction of 
$L_2$ in $(G,\mathscr{L})$. Then we find $L_3\in\mathscr{L}\setminus o_2$,
with $d_3\in L_3$ and $L_3\cap (D\setminus\{d_3,\}) =\varnothing$, so 
$r\in L_2\cap L_3$ is a point in $G\setminus D$. Let
$o_3$ be the direction of  $L_3$ in $(G,\mathscr{L})$. There are 
$n-1$ lines in $\mathscr{L}\setminus(o_2\cup o_3)$ passing through
$d_1$, and $|(D\setminus\{d_1\})\cup\{r\}| = k\leq n-2$, so there is a line
$L_1\in\mathscr{L}\setminus(o_2\cup o_3)$ passing through
$d_1$ with $L_1\cap ((D\setminus\{d_1\})\cup\{r\}) =\varnothing$. 
Then, $p\in L_1\cap L_2$, $q\in L_1\cap
L_3$ and $r\in L_2\cap L_3$ are 3 points in general position in $G\setminus D$,
and the lines $L_1^- = L_1\setminus\{d_1\}$, 
$L_2^- = L_2\setminus \{d_2\}$ and $L_3^- = L_3\setminus\{d_3\}$,
witness (LS5) for $d_1$, $d_2$ and $d_3$ in $(G\setminus D, \mathscr{L}_D)$.  
\end{proof}

\begin{cor}\label{cor:notstrLyndon}
There exist affine Lyndon geometries of all orders greater than $3$;
equivalently,  $\mathfrak{E}_{n+1}^{\{1,3\}}$ is qualitatively representable
for all $n\geq 3$. 
\end{cor}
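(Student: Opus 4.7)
The plan is to combine Theorem~\ref{thm:qualLyndon} with Theorem~\ref{thm:drop}, using Bertrand's postulate to handle the parameter matching. By Theorem~\ref{thm:qualLyndon}, qualitative representability of $\mathfrak{E}_{n+1}^{\{1,3\}}$ is equivalent to the existence of an affine Lyndon geometry whose parallelism has exactly $n$ blocks, so the task reduces to producing such a geometry for each relevant $n$.

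Theorem~\ref{thm:drop} supplies the construction: starting from an affine plane of order $m \geq 3$ and removing any set $D$ of $k \leq m - 2$ points yields an affine Lyndon geometry with exactly $m + k + 1$ blocks. Letting $k$ vary over $\{0, 1, \dots, m-2\}$ gives block counts throughout $\{m + 1, \dots, 2m - 1\}$. To hit a prescribed target block count $n$ one therefore needs an integer $m$ in the interval $[\lceil (n+1)/2 \rceil,\, n - 1]$ that is a prime power (so that an affine plane of order $m$ is available), and then one takes $k = n - m - 1$.

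The main step, and essentially the only real obstacle, is this number-theoretic question of whether the interval above always contains a prime power. Bertrand's postulate delivers it cleanly: applied to $\lfloor n/2 \rfloor$ it produces a prime $p$ with $\lfloor n/2 \rfloor < p < 2\lfloor n/2 \rfloor$, and a short parity check places $p$ in $[\lceil (n+1)/2 \rceil,\, n - 1]$ for every $n \geq 4$. Choosing $m = p$ and $k = n - p - 1$ then gives the required affine Lyndon geometry, establishing qualitative representability for all $n \geq 4$. The boundary value $n = 3$, which would correspond to a $3$-block parallelism and thus an affine plane of order $2$, lies outside the direct scope of Theorem~\ref{thm:drop}; this case, if it is to be absorbed into the statement as written, must be addressed by an independent small construction, and is the one place where the assembly is not entirely automatic.
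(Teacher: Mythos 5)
Your proposal is correct for $n\geq 4$ and is essentially the paper's own argument: reduce via Theorem~\ref{thm:qualLyndon} to constructing an $n$-block affine Lyndon geometry, obtain block counts $m+1,\dots,2m-1$ from Theorem~\ref{thm:drop} applied to an affine plane of prime order $m$, and locate a prime in the required window (the paper invokes Nagura's refinement of Bertrand's postulate, but as you observe plain Bertrand already suffices). Your caution about $n=3$ is well placed, and in fact no ``independent small construction'' can rescue that case: in a $3$-colouring with no dichromatic triangles each colour class is a disjoint union of cliques, and if some colour has a maximal clique $K$ with $|K|\geq 3$ then for any vertex $x\notin K$ the edges from $x$ to the points of $K$ must be pairwise distinct colours drawn from the remaining two colours, which is impossible; hence every vertex lies in $K$, the other two colours are unused, and $\mathfrak{E}_{4}^{\{1,3\}}$ is not qualitatively representable. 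This agrees with the summary table's entry ``Yes iff $n>3$'', so the bound ``$n\geq 3$'' in the statement (and the corresponding claim in the paper's proof that all orders down to $2$ are reached) is an off-by-one slip that should read $n\geq 4$; your proof covers exactly the cases that are actually true.
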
 
\begin{proof}
By Theorem \ref{thm:qualLyndon} it suffices to cover all $n\geq 3$ using
Theorem~\ref{thm:drop}.  There are affine planes of every prime order
$p$,  so Theorem~\ref{thm:drop} shows that there are affine Lyndon
geometries of all orders $p,p+1,\dots,2(p-1)$.  All numbers $n\geq 2$ lie
in the interval $[p, 2(p-1)]$ for some prime $p$: this is trivial to verify for
small $n$, and follows, for example, from Nagura's variant of Bertrand's
postulate~\cite{nag}, showing that for $m\geq 25$ there is a prime between $m$
and $5m/4$. 
\end{proof}

\section{Infinite cardinalities}

The definition of chromatic algebras $\mathfrak{E}_{n}^S$ easily extends to
allow for the case $n=\kappa$ for an infinite cardinal $\kappa$.
Strong representations for $\mathfrak{E}_{\kappa}^S$ are known for $S=\{1,3\}$
(by way of affine planes of order $\kappa$), while strong representations for
$\mathfrak{E}_{\kappa}^S$ in the case of $S=\{1,2\},\{2,3\},\{1,2,3\}$  are
easily achievable using the game-theoretic methods of
Hirsch and Hodkinson~\cite{HH02}, but they are somewhat
convoluted, being subdirect products of representations of
countable subalgebras of $\mathfrak{E}_{\kappa}^S$ (even in the case
$\kappa=\omega$). We will not investigate them here.
The remaining cases do not admit strong  representations: for $\varnothing$ and
$\{1\}$ this is trivial, while for $S=\{3\}$ and $S=\{2\}$, the situation is
discussed in the relevant Sections~\ref{sec:quasigr} and~\ref{sec:2}.   

For qualitative representations there emerges an interesting  contrast with the
finite colour results, namely, $\mathfrak{E}_{\kappa}^{\{2\}}$ becomes qualitatively
representable for all infinite~$\kappa$, despite  $\mathfrak{E}_{n+1}^{\{2\}}$
not being qualitatively representable for any finite $n$.  Moreover,
there is a direct, uniform method of constructing representations for all
infinite cases.

\begin{theorem}\label{thm:uncountable}
Let $\kappa$ be an infinite cardinal.  
For $S\subseteq \{1,2,3\}$, the algebra $\mathfrak{E}_{\kappa}^S$ has a
qualitative representation if and only if $S\cap \{2,3\}\neq \varnothing$. 
\end{theorem}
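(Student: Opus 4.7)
The statement is an equivalence, and we treat the two directions separately.

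For the forward direction, suppose $S \cap \{2,3\} = \varnothing$, so $S \subseteq \{1\}$. If $S = \varnothing$, then $F = \{1,2,3\}$, and by Lemma~\ref{lem:combinatorial}(2) a qualitative representation would force an edge colouring with no triangle of any colour pattern at all; but surjectivity onto $\kappa \geq 2$ proper colours forces a base of at least three vertices in a complete graph, which always contains a triangle---a contradiction. If $S = \{1\}$, then $F = \{2,3\}$, so every triangle must be monochromatic; picking any vertex $v$ with any two neighbours $u_1, u_2$ gives $\lambda(v,u_1) = \lambda(v,u_2) = \lambda(u_1,u_2)$, and propagation through the complete graph forces a single colour everywhere, again contradicting $\kappa \geq 2$.

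For the reverse direction, construct a qualitative representation for each $S$ with $2 \in S$ or $3 \in S$. When $3 \in S$, the finite-case constructions lift cleanly: for $S = \{3\}$, take $V$ to be a $\mathbb{Q}$-vector space of cardinality $\kappa$ with $u \cdot v \deq (u+v)/2$, a commutative idempotent quasigroup satisfying the $3$-cycle condition by the same linear calculation as in Lemma~\ref{lem:Qn}, so that Lemma~\ref{lem:quasigrps} applies; for $S = \{1,3\}$, apply Theorem~\ref{thm:qualLyndon} to the affine plane over an infinite field of cardinality $\kappa$; for $S = \{2,3\}$ and $S = \{1,2,3\}$, a transfinite amalgamation generalising the constructions of Sections~\ref{sec:easy} and~\ref{sec:2} works, using fresh colours to resolve amalgamation conflicts. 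The case $S = \{1,2\}$ is handled by lifting the Hirsch--Hodkinson strong-representation games to the infinite setting.

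The substantive novelty, and the main obstacle, is the case $S = \{2\}$, for which the finite algebras $\mathfrak{E}_{n+1}^{\{2\}}$ fail to be qualitatively representable for $n \geq 3$ (Section~\ref{sec:2}). Build the representation by transfinite recursion of length $\kappa$ on a vertex set $V$ of cardinality $\kappa$: enumerate the dichromatic realisation requirements---pairs of distinct colours together with the two multiplicity patterns $\{c,c,d\}$ and $\{c,d,d\}$---as $\langle R_\alpha : \alpha < \kappa\rangle$, and at stage $\alpha$ adjoin three new vertices whose internal edges realise $R_\alpha$, together with colours on all edges from these new vertices to the previously constructed structure. The verification step is that these new edges can always be coloured to avoid creating both monochromatic and trichromatic triangles. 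The pigeonhole-type obstruction of Lemmas~\ref{lem:about-ell}--\ref{one-out-1-n} forces this to fail in the finite case; when $\kappa$ is infinite, however, the constraints imposed by each previously coloured vertex exclude only boundedly many colours from each new edge, so a full $\kappa$ of admissible colours remains available, and careful bookkeeping along the recursion realises every requirement in the limit. This use of the infinitude of $\kappa$ is the essential point distinguishing the infinite from the finite case, and it accounts for the sharp contrast with the impossibility result for finite $n > 2$.
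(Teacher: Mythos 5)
Your forward direction is fine, and some of your case-by-case constructions for $3\in S$ are plausible alternatives to the paper's argument (the paper instead uses one uniform transfinite construction for every $S$ with $S\cap\{2,3\}\neq\varnothing$: enumerate the $\kappa$ required triangles, adjoin each as a disjoint new triangle, and complete the network with fresh colours, with cardinality bookkeeping guaranteeing that fewer than $\kappa$ colours have been used at each stage). But there is a genuine gap exactly where you locate the main difficulty, namely $S=\{2\}$. Here $F=\{1,3\}$, so \emph{every} triangle must be dichromatic. When you adjoin a new vertex $u$ and colour its edges back to the old network, the constraint coming from an old edge $\{v,w\}$ with $\lambda(v,w)=d$ is that $\{\lambda(u,v),\lambda(u,w),d\}$ has exactly two elements: either $\lambda(u,v)=\lambda(u,w)\neq d$, or exactly one of $\lambda(u,v),\lambda(u,w)$ equals $d$. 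This is a \emph{joint} constraint on pairs of cross edges, not a per-edge exclusion of boundedly many colours; if you choose $\lambda(u,v)$ and $\lambda(u,w)$ independently from ``a full $\kappa$ of admissible colours,'' you will generically pick two distinct colours both different from $d$ and create a forbidden trichromatic triangle. So the pigeonhole-style justification you give does not go through. The fix (and the paper's move) is to colour \emph{all} cross edges at a given stage with one and the same fresh colour $c$: then every new triangle has two edges coloured $c$ and one edge of an old, hence different, colour, so it is dichromatic by construction. With that change your recursion works, and the same device (one fresh colour per stage when $3\notin S$, pairwise distinct fresh colours on the cross edges when $3\in S$) handles all the nonempty cases at once, eliminating your case split.

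Two smaller points. For $S=\{1,2\}$ you appeal to lifting the Hirsch--Hodkinson games, but for infinite $\kappa$ these yield non-square strong representations (subdirect products of representations of countable subalgebras), whereas a qualitative representation must satisfy $1^\phi=U\times U$; the paper explicitly declines that route, and the uniform construction covers $\{1,2\}$ directly. Your treatments of $\{2,3\}$ and $\{1,2,3\}$ by ``transfinite amalgamation \dots using fresh colours'' are essentially correct in spirit but are stated too vaguely to check; they too reduce to the single construction above.
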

\begin{proof}
For $S=\varnothing$ or $S=\{1\}$ it is trivial that no representations exist.  

Now assume that $3\in S$, and let $C_\kappa$ be the set of all
colours. There are $\kappa$ triangles $(T_\alpha: \alpha<\kappa)$ to be
witnessed in order to achieve a qualitative representation of
$\mathfrak{E}_{\kappa}^S$. We will construct a chain of complete networks
$(N_\alpha: \alpha<\kappa)$ and a chain of sets of colours 
$(C_\alpha: \alpha<\kappa)$ such that $C_\alpha$ is the set of all colours used
in $N_\alpha$. We use $\otimes$ and $\oplus$ for cardinal multiplication and
addition, and $+$ for ordinal addition.
Let $N_0$ be the empty network. Assume inductively that
a chain of complete networks
$(N_\beta: \beta<\alpha)$ has been constructed, such that
(i) each $N_\beta$ witnesses the triangles $T_\gamma$ for all $\gamma< \beta$,
(ii) $|N_\beta| \leq 3\otimes |\beta|$, and
(iii) $|C_\beta| \leq 9\otimes|\beta|\otimes |\beta|$.
Construct $N_\alpha$ as follows.
\begin{itemize}
\item If $\alpha = \gamma+1$ for some $\gamma$, let
$N'_\alpha := N_\gamma\uplus T_\gamma$. Then
$|N'_\alpha| = |N_\gamma|\oplus 3 \leq 3\otimes |\gamma|\oplus 3 \leq
3\otimes |\alpha|$,  and 
$N'_\alpha$ witnesses all triangles $T_\beta$ for $\beta<\alpha$. So
$N'_\alpha$ satisfies (i) and (ii) but it is not complete (except for
$\gamma=0$). Let $M'_\alpha :=\{\{x,y\}: x\in N_\gamma,\ y\in T_\gamma \}$.
Then, $|M'_\alpha| = 3\otimes|N_\gamma|$, and putting $C'_\alpha$ for the set of
colours used in $N'_\alpha$ we get
$|C'_\alpha| = 
|C_\gamma|\oplus 3\leq 9\otimes|\gamma|\otimes |\gamma| \oplus 3<\kappa$. 
Hence $|C_\kappa\setminus C_\alpha'| = \kappa$ and therefore 
we can choose from $C_\kappa\setminus C_\alpha'$ a unique
colour for each $\{x,y\}\in M'_\alpha$ and thus complete $N_\alpha'$ to
$N_\alpha$. Then $|N_\alpha| = |N'_\alpha| \leq 3\otimes |\alpha|$,  and
$|C_\alpha| \leq |C_\alpha'\cup M'_\alpha| \leq
9\otimes|\gamma|\otimes |\gamma| \oplus 3\oplus 3\otimes|N_\gamma|\leq
9\otimes|\gamma|\otimes |\gamma|\oplus 3\oplus 9\otimes|\gamma| 
\leq 9\otimes |\alpha|\otimes|\alpha|$. Thus,
(i), (ii) and (iii) are satisfied.

\item If $\alpha$ is a limit ordinal, let $N_\alpha :=
\bigcup_{\beta<\alpha}N_\beta$. Then $N_\alpha$ is a complete network witnessing
the triangles $T_\beta$ for all $\beta< \alpha$, so
(i) is satisfied. Moreover,
$|N_\beta|\leq  3\otimes |\beta| \leq |\alpha|$ for each $\beta<\alpha$, so
$N_\alpha$ is a  union of
$|\alpha|$ sets, each of cardinality at most $|\alpha|$; hence
$|N_\alpha|\leq |\alpha|$, satisfying (ii). Similarly,
$|C_\beta|\leq 9\otimes|\beta|\otimes |\beta|\leq |\alpha|$ for each
$\beta<\alpha$, so 
$|C_\alpha| = |\bigcup_{\beta<\alpha}C_\beta|\leq |\alpha|$,
showing that (iii) is satisfied.
\end{itemize}
The union $\bigcup_{\alpha<\kappa} N_\alpha$ is a well-defined
qualitative representation of $\mathfrak{E}_{\kappa}^S$.   
If $3\notin S$ but $2\in S$ we can apply same methodology as for $3\in S$,
except that now to complete $N'_\alpha$ to $N_\alpha$ for a successor $\alpha$   
we select a single colour from $\kappa$ unused ones for all the missing edges. 
\end{proof}

\section{Summary}

The table below summarises the known results on representability of finite chromatic
algebras. The main question that remains open is  
for which $n$ strong representations of Ramsey algebras
$\mathfrak{E}_{n+1}^{\{2,3\}}$ exist.
\begin{center}
\begin{tabular}{c|c|c|c}
  \hline
  \multicolumn{4}{c}{Representations of $\mathfrak{E}_{n+1}^S$}\\ 
  \hline
    $S$  & comp. of diversity atoms & quali-rep. & strong rep. \\
  \hline
  \hline
  $\{1,2,3\}$ & $a_i\comp a_j =\begin{cases}
    0' & \text{ if } i\neq j\\
    1  & \text{ if } i = j
    \end{cases}$   &  Yes & Yes \\ 
  \hline
  $\{2,3\}$ & $a_i\comp a_j =\begin{cases}
    0' & \text{ if } i\neq j\\
    \neg a_i  & \text{ if } i = j
  \end{cases}$   &  Yes & \makecell{Yes for $ n\lesssim 2000$\\
                                  except 8 and 13\\
                                  Not known in general} \\
  \hline
  $\{1,3\}$ & $a_i\comp a_j =\begin{cases}
    \neg a_i\wedge\neg a_j\wedge 0' & \text{ if } i\neq j\\
    1'\vee a_i  & \text{ if } i = j
  \end{cases}$   &  \makecell{Yes iff $n>3$}   & \makecell{Yes iff\\ 
                                    $\exists$ proj. plane\\  
                                    of ord. $n-1$} \\ 
  \hline
  $\{1,2\}$ & $a_i\comp a_j =\begin{cases}
    a_i\vee a_j  & \text{ if } i\neq j\\
    1  & \text{ if } i = j
  \end{cases}$   &  Yes & \makecell{Yes\\
                                  (only infinite rep's)} \\ 
\hline
  $\{3\}$ & $a_i\comp a_j =\begin{cases}
    \neg a_i\wedge\neg a_j\wedge 0' & \text{ if } i\neq j\\
    1'  & \text{ if } i = j
  \end{cases}$   &  Yes iff $n$ is odd  & Yes iff $n = 3$ \\ 
\hline
  $\{2\}$ & $a_i\comp a_j =\begin{cases}
    a_i\vee a_j & \text{ if } i\neq j\\
    \neg a_i  & \text{ if } i = j
  \end{cases}$   &  No  & No \\ 
\hline
  $\{1\}$ & $a_i\comp a_j =\begin{cases}
    0 & \text{ if } i\neq j\\
    1'\vee a_i  & \text{ if } i = j
  \end{cases}$   &  Yes iff $n=1$  & Yes iff $n=1$ \\ 
\hline
  $\{\varnothing\}$ & $a_i\comp a_j =\begin{cases}
    0 & \text{ if } i\neq j\\
    1'  & \text{ if } i = j
  \end{cases}$   &  Yes iff $n=1$  & Yes iff $n=1$ \\ 
\hline
\end{tabular}
\end{center}

\section*{Acknowledgements}
The authors wish to thank Michael Payne for his discussions around aspects of
this paper, particularly around developments leading to Theorem~\ref{thm:drop}.

\end{document}